 \tikzset{>=latex}
\pgfplotsset{compat=1.7}
\newtheorem{theorem}{Theorem}[section]
\newtheorem{lemma}[theorem]{Lemma}
\newtheorem{proposition}[theorem]{Proposition}
\theoremstyle{definition}
\newtheorem{definition}[theorem]{Definition}
\theoremstyle{remark}
\newtheorem{remark}[theorem]{Remark}
\numberwithin{equation}{section}
\newcommand{\p}{\partial}
\newcommand{\D}{{\mathbb{D}}}
\newcommand{\C}{{\mathbb{C}}}
\newcommand{\R}{{\mathbb{R}}}
\newcommand{\Q}{{\mathbb{Q}}}
\newcommand{\Z}{{\mathbb{Z}}}
\newcommand{\N}{{\mathbb{N}}}
 \newcommand{\im}{\mathrm{im}}
\renewcommand{\epsilon}{\varepsilon}
\renewcommand{\theta}{\vartheta}
\title{Transverse foliations in the rotating Kepler problem}
\author[Seongchan Kim]{Seongchan Kim}
  \address{Department of Mathematics Education, Kongju National University, Kongju 32588, Republic of Korea}
  \email {seongchankim@kongju.ac.kr}
\begin{document}
\maketitle

  \begin{abstract}
  We construct  finite energy foliations and  transverse foliations of neighbourhoods of the circular orbits in the rotating Kepler problem for all negative energies. 
  This paper would be a first step towards our ultimate goal that is to  recover and refine McGehee's results on homoclinics \cite{McGehee} and   to establish a theoretical foundation to the numerical demonstration of the existence of a homoclinic-heteroclinic chain  in the planar circular restricted three-body problem \cite{MR1765636}, using pseudoholomorphic curves.

\end{abstract}



  \section{Introduction}
  
     The rotating Kepler problem is the Kepler problem in rotating coordinates, obtained from the planar circular restricted three-body problem (PCR3BP) by setting the mass of one of the primaries to zero.
       Its Hamiltonian  $H \colon T^*( \R^2 \setminus \{  0  \} ) \to \R$ is  given by
   \begin{align}\label{eq:RKPHAM}
   \begin{split}
   H(q_1, q_2, p_1,p_2) &= \frac{1}{2} \lvert p \rvert^2 - \frac{1}{ \lvert q \rvert} - p_2 q_1 + p_1 q_2 \\
   & = \frac{1}{2} ( (p_1 +q_2)^2 + (p_2 - q_1)^2) - \frac{1}{\lvert q\rvert}- \frac{1}{2} \lvert q \rvert^2,
   \end{split}
   \end{align}
where $q=(q_1, q_2)$ and $p=(p_1, p_2).$
 It admits two integrals of motion
   \begin{equation}\label{eq:ELoriginal}
   E = \frac{1}{2} \lvert p \vert^2 - \frac{1}{ \lvert q \rvert }, \quad \quad
   L = q_1 p_2 - q_2 p_1,
   \end{equation}
   called the Kepler energy and the angular momentum, respectively. Throughout the paper, we tacitly assume that the Kepler energy is negative, so that its     trajectories projected  to the $q$-plane are ellipses.   We call such  trajectories Kepler ellipses.

   The Hamiltonian $H$ admits a unique critical value $c = -\frac{3}{2}.$ If $c<-\frac{3}{2},$ then the energy level $H^{-1}(c)$ consists of two connected components, denoted by $\Sigma_c^b$ and $\Sigma_c^u.$  They are called the bounded and the unbounded components, respectively. It is important to mention that the component $\Sigma_c^b$ is not bounded as a set. Indeed, the point $(q_1, q_2, p_1,p_2)=( \frac{1}{a}, 0, \sqrt{ 2(a+c)}, 0)$ lies on $\Sigma_c^b$ for any value of $a>-c.$ We stick to our nomenclature though as its projection to the  $q$-plane  is bounded.  If $c>-\frac{3}{2},$ then the energy level $ H^{-1}(c)$ consists of a single unbounded component whose projection to the $q$-plane   is the whole $\R^2 \setminus \{ 0\}.$

Periodic orbits will be referred to as circular orbits if  the projections to the $q$-plane  are circular. A circular orbit is called direct and retrograde if   it is rotating in the same direction as the coordinate system and   in opposite direction to the coordinate system, respectively.   If $c<-\frac{3}{2},$ then there are precisely two circular orbits $\gamma_{\rm retro}^b$ and $ \gamma_{\rm direct}^b$  on $\Sigma_c^b$ and a unique circular orbit $\gamma_{\rm direct}^u$ on $\Sigma_c^u.$    The circular orbit $\gamma_{\rm retro}^b$ is retrograde and  the other two  are direct. When $c = -\frac{3}{2},$ the two direct circular orbits degenerate into a circle of critical points.
 In the case $c>-\frac{3}{2} ,$ there is a unique circular orbit $\gamma_{\rm retro}^u$ on $H^{-1}(c)$ that is retrograde.

   Fix $c<-\frac{3}{2}.$ 
   The angular momentum $L,$ restricted to $\Sigma_c^b,$ takes values in the interval $[L^b_{\rm retro}, L^b_{\rm direct}],$ where $L^b_{\rm retro}<0$ and    $L^b_{\rm direct}>0$ are angular momenta of  $\gamma_{\rm retro}^b$ and $\gamma_{\rm direct}^b,$ respectively. See Section \ref{sec:RKP}. Each $L \in (L^b_{\rm retro}, L^b_{\rm direct})$ corresponds to a Liouville torus.   The three-dimensional manifold $\Sigma_c^b$ is not compact due to collisions. Note that along all collision orbits we have $L=0.$    However, two-body collisions can always be regularised, see for example \cite[Section 2]{Moser70} and also \cite[Section 4.1]{FvK18book}, and hence $\Sigma_c^b$ can be compactified to form a closed three-dimensional manifold $\overline{\Sigma}_c^b,$  diffeomorphic to $\R P^3.$

   Consider two solid tori 
   \begin{align*}
   \Sigma_{\rm retro} ^b&= \{ (q,p) \in \overline{\Sigma}_c^b : (q,p) \mbox{ lies on an orbit with } L \in [L_{\rm retro}^b,  0  ]\},\\
   \Sigma_{\rm direct}^b &= \{ (q,p) \in \overline{\Sigma}_c^b : (q,p) \mbox{ lies on an orbit with } L \in [0,L_{\rm direct}^b] \}.
   \end{align*}   
  Each solid  torus is foliated into embedded discs, whose boundaries lie on the boundary torus consisting only of collision orbits and whose interiors are transverse to the Hamiltonian flow.    The circular orbit  corresponds to a unique fixed point of the associated first return map.    To see this, we first note that there is a contact form on $\overline{\Sigma}_c^b$ that is dynamically convex, i.e.\ every contractible periodic orbit has Conley-Zehnder index at least three.   See  \cite[Proposition 3.2]{AFvKG12} and   \cite[Theorem 1.1]{RKP13}. Then an application of a result from \cite{HS16elliptic} implies that $\gamma_{\rm retro}^b$   binds a rational open book decomposition whose pages are disc-like global surfaces of section for the Hamiltonian  flow.     Since every $ L \in [0,L_{\rm direct}^b]$ corresponds to a Liouville torus that is tangent to the Hamiltonian flow, each of the pages of this rational open book decomposition intersects $\Sigma_{\rm direct}^b$ transversally in closed discs that determine the above-mentioned disc foliation of $\Sigma_{\rm direct}^b$. The same holds for $\gamma_{\rm direct}^b$ and $\Sigma_{\rm retro}^b.$

   The goal of this paper is to show that   similar assertions hold  in  neighbourhoods of $\gamma_{\rm direct}^u$   and $\gamma_{\rm retro}^u,$   as stated below.

   \begin{theorem} \label{thm:main} The following assertions hold.
\begin{enumerate}
    \item    Assume that $c<-\frac{3}{2}.$ Then  there is a subset  $\Sigma_{\rm direct}^u$  of the unbounded component $\Sigma_c^u \subset H^{-1}(c)$ satisfying the following properties:
   \begin{enumerate}
   \item It is diffeomorphic to a solid torus whose  core is   $\gamma_{\rm direct}^u $ and whose boundary is a Liouville torus.
   \item It is foliated into embedded discs whose  boundaries lie on the boundary torus and whose interiors are transverse to the Hamiltonian flow.   
   \item A unique fixed point of the associated first return map corresponds to  the direct circular orbit $\gamma_{\rm direct}^u.$ 
   \end{enumerate}
   \item   If $-\frac{3}{2}<c<0,$ then there is a subset  $\Sigma_{\rm retro}^u \subset H^{-1}(c)$ having the same properties as above, with  $\gamma_{\rm direct}^u$ being replaced by $\gamma_{\rm retro}^u.$ 
      \end{enumerate}
      \end{theorem}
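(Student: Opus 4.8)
The plan rests on the complete integrability of the rotating Kepler problem: since $H=E-L$ and the Kepler energy $E$ and the angular momentum $L$ Poisson commute, both $E$ and $L$ are first integrals of the Hamiltonian flow, so every level set $H^{-1}(c)$ is foliated by the level sets of $L$. \emph{Step 1 (the solid torus; property (a)).} Circular solutions on $H^{-1}(c)$ correspond to the roots of the cubic $2L^{3}+2cL^{2}+1$, and for $c<-\tfrac32$ one of these roots is the angular momentum of $\gamma_{\mathrm{direct}}^{u}$; it is a simple root, so $L|_{\Sigma_{c}^{u}}$ has a nondegenerate extremum along $\gamma_{\mathrm{direct}}^{u}$ (the double root, the degenerate case, occurs only at $c=-\tfrac32$, where $\gamma_{\mathrm{direct}}^{u}$ merges with $\gamma_{\mathrm{direct}}^{b}$). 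Because the first return map of $\gamma_{\mathrm{direct}}^{u}$ to a small transverse disc preserves both $L$ and the transverse area form, its linearisation preserves the definite Hessian of $L$ and the area form, hence lies in $SO(2)$, and $\gamma_{\mathrm{direct}}^{u}$ is a transversally elliptic periodic orbit; the nearby level sets $\{L=\ell\}\cap\Sigma_{c}^{u}$ are then embedded Liouville $2$-tori which, together with $\gamma_{\mathrm{direct}}^{u}$, foliate a solid torus. Taking $\Sigma_{\mathrm{direct}}^{u}$ to be the region enclosed by one such torus, sufficiently close to $\gamma_{\mathrm{direct}}^{u}$, gives (a), and by the tubular neighbourhood theorem for periodic orbits one obtains coordinates $(t,z)\in S^{1}\times\D$ on $\Sigma_{\mathrm{direct}}^{u}$ in which $\gamma_{\mathrm{direct}}^{u}=\{z=0\}$, the Liouville tori are the $\{|z|=\mathrm{const}\}$, and — after shrinking — the Hamiltonian flow satisfies $\dot t>0$.

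\emph{Step 2 (rotation numbers).} The essential quantitative input is the transverse rotation number $\theta_{0}$ of $\gamma_{\mathrm{direct}}^{u}$ and, more generally, those of the linear flows on the Liouville tori of $\Sigma_{\mathrm{direct}}^{u}$. I would compute these from the Keplerian data — essentially the ratio of the Kepler frequency of the circular orbit to the frequency of the rotating frame — in the spirit of the index computations behind the dynamical convexity statements of \cite{AFvKG12,RKP13}; the point is that $\theta_{0}\notin\Z$ for $c<-\tfrac32$, i.e.\ the linearised first return map of $\gamma_{\mathrm{direct}}^{u}$ has no eigenvalue equal to $1$, the threshold $c=-\tfrac32$ being exactly where $\theta_{0}$ meets an integer.

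\emph{Step 3 (the disc foliation; properties (b) and (c)).} In the coordinates of Step 1 the meridian discs $D_{t_{0}}=\{t=t_{0}\}\subset\Sigma_{\mathrm{direct}}^{u}$ are embedded, their boundaries lie on the Liouville torus $\partial\Sigma_{\mathrm{direct}}^{u}$, and their interiors are transverse to the flow because $\dot t>0$; this is (b). On each $D_{t_{0}}$ the first return map is a diffeomorphism fixing the centre with linearisation a rotation by $2\pi\theta_{0}$, so by Step 2 the centre — the point where $\gamma_{\mathrm{direct}}^{u}$ meets $D_{t_{0}}$ — is its only fixed point after a final shrinking, which is (c). A second route, parallel to the treatment of $\Sigma_{\mathrm{direct}}^{b}$ and the one that later links $\Sigma_{\mathrm{direct}}^{u}$ to the finite energy foliations, would be to cap $\partial\Sigma_{\mathrm{direct}}^{u}$ off with a standard solid torus so as to build $\RP$ (or a lens space) carrying a dynamically convex contact form whose Reeb flow restricts, up to reparametrisation, to the Hamiltonian flow on $\Sigma_{\mathrm{direct}}^{u}$, then apply the elliptic binding theorem of \cite{HS16elliptic} to the elliptic orbit at the core of the glued solid torus to get a rational open book with disc-like pages, and note that every page meets $\Sigma_{\mathrm{direct}}^{u}$ in a meridian disc — transversality being automatic since each page is transverse to the flow while $\partial\Sigma_{\mathrm{direct}}^{u}$ is tangent to it.

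\emph{Step 4 (part (2) and the main obstacle).} For $-\tfrac32<c<0$ the level $H^{-1}(c)$ is connected; in $\{E<0\}$ the angular momentum again ranges over an interval whose relevant endpoint is the angular momentum of the retrograde circular orbit $\gamma_{\mathrm{retro}}^{u}$, once more a nondegenerate transversally elliptic extremum, so Steps 1--3 carry over with the rotation number of $\gamma_{\mathrm{retro}}^{u}$ — again not an integer in this range — in place of $\theta_{0}$. The real difficulty, in both parts, is that $\Sigma_{c}^{u}$ is genuinely non-compact: orbits escape to infinity as $E\uparrow0$, so that, unlike $\overline{\Sigma}_{c}^{b}\cong\RP$, it carries no global dynamically convex contact form. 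The elementary argument of Step 3 sidesteps this by confining itself to an arbitrarily thin solid torus, whereas enlarging $\Sigma_{\mathrm{direct}}^{u}$ toward the end $\{E=0\}$ — and implementing the pseudoholomorphic-curve version — forces one either to compactify that end or to verify directly that the rotation numbers stay in the dynamically convex range out to the chosen Liouville torus; that verification, together with the explicit index computation of Step 2, is where I expect the work to concentrate.
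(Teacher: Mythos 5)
Your strategy (Liouville tori around a transversally elliptic circular orbit, meridian discs of a tubular neighbourhood, a rotation-number argument for the return map) is genuinely different from the paper's, which fixes an arbitrary $E_0\in(E_{\rm direct}^u,0)$, passes to Poincar\'e's coordinates, interpolates the Hamiltonian so that a double cover of $\Sigma_{\rm direct}^u$ sits inside a weakly convex three-sphere with an anti-contact involution, writes down explicit finite energy planes and cylinders, and invokes the Hofer--Wysocki--Zehnder/Wendl machinery together with intersection theory to obtain a symmetric finite energy foliation whose projection, intersected with $\Sigma_{\rm direct}^u$, is the desired disc foliation. Since Theorem \ref{thm:main} is an existence statement, confining yourself to a thin solid torus is legitimate in principle, but note that it does not reach the sets $\Sigma_{\rm direct}^u=\{E\in[E_{\rm direct}^u,E_0]\}$ the paper actually treats, which can be arbitrarily large (Remark \ref{rmk:large}).

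The genuine gap is in Step 2, and part (1)(c) of your argument rests on it. In Poincar\'e's coordinates the Hamiltonian is \eqref{eq:orihamlow}: the $(x_1,y_1)$-factor rotates rigidly with period $2\pi$ independently of the orbit, while $x_2$ advances with constant speed $y_2^{-3}-1$ on each Liouville torus. Hence the first return map to a meridian disc is an integrable twist map whose rotation angle on the circle at level $y_2$ is $\frac{2\pi}{1-y_2^{-3}}$, and the transverse rotation number of $\gamma_{\rm direct}^u$ is $\frac{1}{1-\omega}$ with $\omega=(-2E_{\rm direct}^u)^{3/2}\in(0,1)$. As $c$ runs through $(-\infty,-\frac32)$ this sweeps out all of $(1,\infty)$; it is an integer precisely when $E_{\rm direct}^u=E_{k,k+1}$, i.e.\ at the discrete set of energies (accumulating only at $c=-\frac32$) where the $T_{k,k+1}$-torus families bifurcate from $\gamma_{\rm direct}^u$ --- for instance $c=-2^{-5/3}-2^{1/3}\approx-1.575$ for $k=1$. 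At such $c$ the linearised return map of $\gamma_{\rm direct}^u$ is the identity, so your claim that $\theta_0\notin\Z$ for all $c<-\frac32$ (with $c=-\frac32$ as the only threshold) is false, and the Step 3 deduction ``no eigenvalue $1$, hence the centre is the only fixed point after shrinking'' is unjustified exactly there; the retrograde case of Step 4 is unaffected, since its rotation number lies in $(0,1)$. The gap is reparable: the return angle $\frac{2\pi}{1-y_2^{-3}}$ is strictly monotone in $y_2$, so the \emph{twist} condition, not nondegeneracy of the linearisation, makes the centre an isolated fixed point even at resonant energies, and a sufficiently small solid torus then carries no other fixed point. Relatedly, your Step 4 locates the difficulty of enlarging the torus in non-compactness, but the first obstruction is these resonances: a $T_{k,k+1}$-orbit winds once around the core, so it meets any embedded disc with boundary on the boundary torus and interior transverse to the flow exactly once per period (all intersections carry the same sign), and is therefore itself a fixed point of the return map as soon as its torus lies in $\Sigma_{\rm direct}^u$; no shrinking-free version of your fixed-point argument can ignore this.
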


   More precise descriptions for the subsets $\Sigma_{\rm direct}^u$ and $\Sigma_{\rm retro}^u$ in the statement above  will be given in Sections \ref{sec:lower} and \ref{sec:higher}, respectively.

\begin{remark} Here are a couple of remarks on the theorem above. 
   \begin{enumerate}
       \item   The reason why we consider only negative energies is that we would not like to regularise the collisions. See Section \ref{sec:RKP}.

\item The subset $\Sigma^u_{\rm direct}$ can be arbitrarily large. See Remark \ref{rmk:large}.

   
    \item Pick any embedded disc $\mathscr{D}$ described in (b) above.      Since the rotating Kepler problem is completely integrable, the solid torus $\Sigma_{\rm direct}^u$ is foliated by Liouville tori. This Liouville foliation induces a foliation of the disc $\mathscr{D}$ into concentric circles whose centre corresponds to the direct circular orbit $\gamma_{\rm direct}^u$.
The same holds for $\Sigma_{\rm retro}^u $ with $\gamma_{\rm direct}^u$ being replaced by $\gamma_{\rm retro}^u.$

    \end{enumerate}
\end{remark}

   The motivation of this paper comes from results on the  PCR3BP  by McGehee \cite{McGehee} and by Koon, Lo, Marsden and Ross  \cite{MR1765636}. We briefly state below these results for readers' convenience.

The PCR3BP studies the motion of an infinitesimal body  under the gravitational influence of two massive bodies moving along circular orbits around their   center of mass. 
The infinitesimal body, denoted by $C,$ is assumed to move in the plane spanned by the orbits of the two massive bodies, denoted by $S$ and $J.$  
We rescale the total mass of the system to one, so that the mass of $S$ equals $1-\mu$ and the mass of $J$ equals $\mu$ for some $\mu \in [0,1].$ If the center of mass is located at the origin, then the Hamiltonian of the PCR3BP in a rotating frame is given by
\[
H_{\mu}(q_1, q_2, p_1,p_2) = \frac{1}{2} \lvert p \rvert^2 - \frac{1-\mu}{\lvert q-S\rvert} - \frac{ \mu}{\lvert q - J \rvert}  - p_2 q_1 + p_1 q_2,
\]
where $S = ( -\mu, 0)$ and $J = (1-\mu,0)$ denote the positions of $S$ and $J,$ respectively. Note that in the case $\mu=0,$ this becomes the Hamiltonian of the rotating Kepler problem, see
\eqref{eq:RKPHAM}.

We assume $\mu < \frac{1}{2},$ so that $S$ is heavier than $J.$
 The Hamiltonian $H_{\mu}$ admits five equilibria $L_j, j=1,2,3,4,5$, ordered by action in the following way: 
 \[
 H_{\mu}(L_1) <H_{\mu}(L_2)<H_{\mu}(L_3)<H_{\mu}(L_4)=H_{\mu}(L_5). 
 \]
These values tend to $-\frac{3}{2},$ the critical value of the rotating Kepler problem, as $\mu \to 0.$ We are interested in energies slightly above the first two critical values $H_\mu(L_1)$ and $H_\mu(L_2).$    The corresponding  Hill's regions are illustrated in Figure \ref{HIllregion}.
           \begin{figure}[ht]
  \centering
  \includegraphics[width=0.8\linewidth]{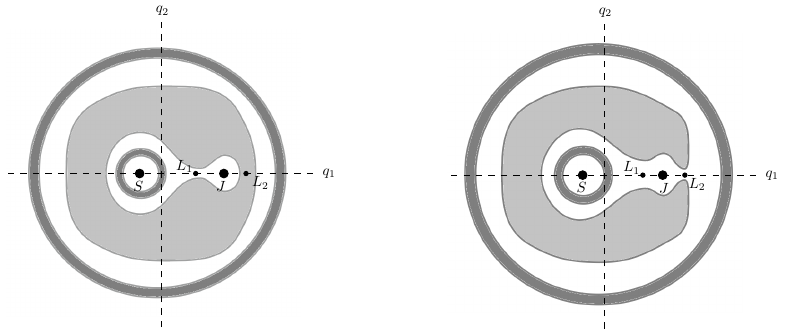}
 \caption{Hill's regions corresponding to energies slightly above $H_\mu(L_1)$ (left) and $H_\mu(L_2)$ (right). The darkly shaded regions indicate the projection of invariant tori.
  }
 \label{HIllregion}
\end{figure}
 Since $L_1$ and $L_2$ are of saddle-center type, a theorem of Lyapunoff \cite{Lia47} shows that 
 for energy slightly above $H_\mu(L_j),$ the energy level carries a unique hyperbolic periodic orbit near $L_j,$ called the Lyapunoff orbit, $j=1,2.$

In his dissertation \cite{McGehee}, McGehee   established the presence of invariant tori on the energy level, as in Figure \ref{HIllregion}, for $\mu >0$ small enough, in other words, for $J$   sufficiently light.     By making use of these tori, he was able to find a  homoclinic orbit to the Lyapunoff orbit associated with $L_1$ or $L_2.$  See Figure \ref{Fig:hetero}.

Using McGehee's result, Koon, Lo, Marsden and Ross provided a numerical demonstration of the existence of a homoclinic-heteroclinic chain in the Sun-Jupiter system \cite{MR1765636}: let the mass $\mu$ of $J$ be that of Jupiter and the    energy   be that of comet Oterma,  so that it is slightly above $H_\mu(L_2).$ 
For these values, the authors showed that the energy level carries the Lyapunoff orbits near $L_1$ and $L_2$ and a heteroclinic orbit between the two Lyapunoff orbits. These three periodic orbits consist of a so-called homoclinic-heteroclinic chain, which might be used to design spacecraft orbits exploring the interior and exterior regions as illustrated in   Figure \ref{Fig:hetero}.
\begin{figure}[h]
  \centering
  \includegraphics[width=0.4\linewidth]{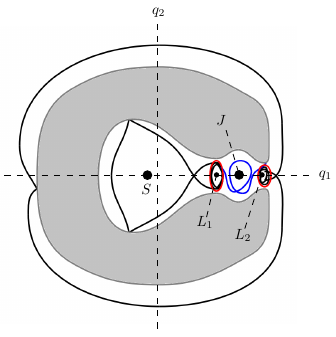}
 \caption{A homoclinic-heteroclinic chain in the PCR3BP. The inner and outer black curves denote  homoclinic orbits to the Lyapunoff orbits (red curves) near $L_1$ and $L_2$, respectively. The blue curve indicates a heteroclinic orbit.}
\label{Fig:hetero}
\end{figure}

Our ultimate goal is to  recover and refine McGehee's results   and   to establish a theoretical foundation to the numerical demonstration by Koon-Lo-Marsden-Ross, using   pseudoholomorphic curves. 
 As compactness is crucial in pseudoholomorphic curve theory,  in order to achieve this goal, one first has to find an efficient way to deal with  non-compact energy levels.   This paper would  provide  a way to construct   finite energy foliations  of     regions in the non-compact energy level in the case $\mu=0,$~i.e.\ in the rotating Kepler problem, so it  would be a first step towards our final goal.

   \medskip
   
   \noindent
   {\bf Outline of the paper.}  We start by recollecting relevant facts about Reeb dynamics and pseudoholomorphic curves in symplectisations in    Section \ref{sec:basic}. Then in    Section \ref{sec:RKP} we review results on the rotating Kepler problem that will be needed in our argument later on. In     Section \ref{sec:coord} we recall the definition and some properties of Poincar\'e's coordinates, a modification of the well-known Delaunay coordinates that are not valid for  the circular orbits.  Constructions of   finite energy folations and transverse foliations are provided in Sections   \ref{sec:lower} and   \ref{sec:higher}.  We present in     Appendix \ref{sec:app}  an alternative  way to obtain a transverse foliation whose pages are annuli.

      \section{Basic notions in contact geometry}\label{sec:basic}

\subsection{Periodic orbits}

 Let $\Sigma=K^{-1}(0) \subset \R^4$ be a regular energy level of a Hamiltonian $K \colon \R^4 \to \R.$  We endow $\R^4$ with coordinates $(x_1, x_2, y_1, y_2).$
  A periodic   orbit (of $K$) on $\Sigma$ will be denoted by a  pair $P=(w,T),$ where $w \colon \R \to \Sigma$ solves the differential equation $\dot w  = X_K \circ w,$ where $X_K$ denotes the Hamiltonian vector field of $K,$  defined as $\omega_0 (X_K , \cdot) = -dK,$   and $T>0$ is a period. Here, $\omega_0 = dy_1 \wedge dx_1 + dy_2 \wedge dx_2$ denotes the standard symplectic form on $\R^4.$ By abuse of notation, we may also denote by $P$ the trace $w(\R) \subset \Sigma  .$
If $T$ is   minimal, then $P$ is called simple.  Throughout the paper, when we consider a periodic orbit, then we tacitly assume that it  is simple.  
If two periodic orbits $P_1=(w_1, T_1)$ and $P_2=(w_2, T_2)$ satisfy $w_1(\R) = w_2(\R)$ and $T_1 = T_2,$ then they are identified, and we write $P_1 =P_2.$

Suppose that $K$ is invariant under an anti-symplectic involution $\rho \colon \R^4 \to \R^4$, meaning that $\rho$ is an involution satisfying $\rho^* \omega_0 = -\omega_0. $  The Hamiltonian vector field satisfies $\rho_* X_K = -X_K,$ so that 
\[
\phi_K^t = \rho \circ \phi_K^{-t} \circ \rho,
\]
where $\phi_K^t$ indicates the flow of $X_K, $ called the Hamiltonian flow of $K$. This shows that if $P=(w,T)$ is a periodic orbit, then $P_\rho = (w_\rho, T),$ defined as
\begin{equation*}\label{eq:symmetrrhods}
    w_\rho \colon \R \to \Sigma, \quad w_\rho (t) =  \rho \circ w(T-t),
\end{equation*}
is a periodic orbit as well. In the case that $w(\R) = w_\rho(\R),$ we write $P = P_\rho$ and call it a symmetric periodic orbit (with respect to $\rho$). Note that every symmetric periodic orbit intersects the fixed point set ${\rm Fix}(\rho),$ which is assumed to be non-empty, precisely at two points.

Assume that $\Sigma$ is compact and of contact type, i.e.\ there is a Liouville vector field $Y$ that is transverse to $\Sigma.$ The one-form $\lambda = \omega_0(Y, \cdot)$ restricts to a contact form   on $\Sigma,$ still denoted by $\lambda.$ Denote by $R=R_\lambda$ the   Reeb vector field of $\lambda.$  Note that the Hamiltonian vector field $X_K$ and the Reeb vector field $R$ are related by
\[
R = \frac{1}{\lambda(X_K)}X_K,
\]
implying that their flows coincide up to reparametrisation. The Reeb period of a periodic orbit $P $  is defined as the period of $P$ with respect to the Reeb flow.

 If the Liouville vector field $Y$ is invariant under $\rho$, i.e.\ it satisfies $\rho_* Y  =Y,$ then $\rho$ becomes exact, meaning that  it satisfies $\rho^* \lambda = - \lambda.$ 
 In this case, $\rho$  restricts to an anti-contact involution on the contact manifold $(\Sigma, \lambda),$ again denoted by $\rho.$ The triple $(\Sigma, \lambda, \rho)$ is said to be a real contact manifold. 
Note that the Reeb vector field $R$ and the Reeb flow $\phi_R^t$ satisfy
\begin{equation*}\label{eq:phirtrho}
\rho_* R = -R, \quad     \phi_R^t = \rho \circ \phi_R^{-t} \circ \rho.
\end{equation*}
Therefore,  given a periodic orbit $P=(w,T),$ the Reeb periods of $P$ and $P_{\rho}$ coincide.

\subsection{Conley-Zehnder index} \label{sec:indexcz}

 As before, let $\Sigma=K^{-1}(0) \subset \R^4$ be a compact   energy level of   $K \colon \R^4 \to \R ,$ equipped with a transverse Liouville vector field $Y.$ The corresponding contact form is denoted again by   $\lambda.$

 For every $z =(x_1, x_2, y_1, y_2) \in   \Sigma  $ the tangent space $T_z \Sigma $  is spanned by the orthogonal vectors $X_1, X_2, X_3$, defined as
      \begin{equation}\label{eq:X123}
      X_j = A_j  \nabla K    , \quad j=1,2,3,
      \end{equation}
      where the $4 \times 4$ matrices $A_j, j=1,2,3,$ are given by
$$
A_1 = \left(\begin{array}{cc}  0 & J \\ J & 0 \end{array} \right), \ \ \  A_2 =\left(\begin{array}{cc} J & 0 \\ 0 & -J \end{array} \right) ,\ \ \ A_3 = \left(\begin{array}{cc} 0 & I \\ -I & 0 \end{array} \right),
$$
with 
$$
  I = \left(\begin{array}{cc} 1 & 0 \\ 0 & 1 \end{array} \right),  \ \ \ \ J = \left(\begin{array}{cc} 0 & 1 \\ -1 & 0 \end{array} \right).
$$
Note that   $X_3=A_3 \nabla K = X_K$ is parallel to the Reeb vector field $R.$   The projection $\pi \colon T\Sigma \to \xi = \ker \lambda$ along $R$ restricted to the tangent plane distribution ${\rm span} \{X_1, X_2\}$ induces an isomorphism from  ${\rm span} \{X_1, X_2\}$ to the contact structure $\xi .$ In particular, the latter is   spanned by the vector fields
      \begin{align}\label{eq:trixi}
      \begin{split}
      \overline X_1 &= \pi(X_1) = X_1 - \lambda(X_1)R,\\
      \overline X_2 &= \pi (X_2) = X_2 - \lambda(X_2)R.
      \end{split}
      \end{align}

Let $\mathfrak{T}$ denote the unitary trivialisation of $T \Sigma / \R X_3,$ induced by $X_1$ and $X_2.$ Set
\[
\kappa_{ij} := \left< {\rm Hess}K \cdot X_i, X_j \right>, \quad i,j=1,2,3.
\]
 In the trivialisation $\mathfrak{T}$ the transverse linearised flow of the Hamiltonian vector field $X_K$ along a  periodic trajectory $P=(w,T)$ on $\Sigma$ is described by solutions $ \alpha(t)=(\alpha_1(t), \alpha_2(t)) \in \R^2$ to the ODE
 \begin{equation}\label{eq:ODEODE}
 \begin{pmatrix} \dot{\alpha}_1(t) \\ \dot{\alpha}_2(t) \end{pmatrix} = \begin{pmatrix} -\kappa_{12} & - \kappa_{22} - \kappa_{33} \\ \kappa_{11} + \kappa_{33} & \kappa_{12} \end{pmatrix} \bigg|_{w(t)} \begin{pmatrix} \alpha_1(t) \\ \alpha_2(t) \end{pmatrix}.  
 \end{equation}

 Let $\theta(t)$ be any  continuous argument of a non-vanishing solution  $\alpha(t) ,$ i.e.\ $\alpha_1(t)+ i\alpha_2(t) \in \R_+ e^{i \theta(t)}.$ Define the rotation interval of $P$ as
 \begin{equation} \label{def:rotin}
 I = \{ \theta(T) - \theta(0)  \mid \alpha(0)\neq 0 \} ,
 \end{equation}
which  depends only on the initial condition $\alpha(0)\neq 0.$ It is a compact   connected interval with   length strictly less than $\pi.$  The periodic orbit $P$ is non-degenerate if and only if $\partial I \cap 2 \pi \Z  = \emptyset.$
Given $\varepsilon>0$ small enough, we set $I_\varepsilon:= I - \varepsilon.$ Then the Conley-Zehnder index of $P$ is defined as
 \[
 \mu_{\rm CZ}(P) = \begin{cases}   2k+1 & \mbox{ if }    I_\varepsilon \subset ( 2k\pi, 2(k+1)\pi), \\ 2k & \mbox{ if } 2k\pi \in {\rm int}(I_\varepsilon).\end{cases}
 \]
 Note that in this definition the Conley-Zehnder index is lower semi-continuous with respect to the $C^0$-topology.

Suppose that the Hamiltonian $K$ is invariant under an   anti-symplectic involution $\rho \colon \R^4 \to \R^4, $ and the Liouville vector field $Y$ satisfies $\rho_*Y=Y,$ so that $\rho$   restricts to an anti-contact involution of $\Sigma,$ denoted again by $\rho$, as in the previous section.    Since
\begin{equation*}\label{eq:rhoinvarintx1x2}
    \rho_* X_j = X_j, \quad j =1,2,3,
\end{equation*}
 we find by the definition of the Conley-Zehnder index that
\begin{equation*}\label{eq:indexofrhoorbit}
    \mu_{\rm CZ}(P_\rho) = \mu_{\rm CZ}(P).
\end{equation*}




\subsection{Pseudoholomorphic curves in symplectisations}\label{sec:pseudoho}

Let $(\Sigma,\lambda)$ be a closed contact three-manifold.    We denote by $\mathcal{J}(\lambda)$ the  set of $d\lambda$-compatible almost complex structures on $\xi=\ker \lambda$. We extend each $J \in \mathcal{J}(\lambda)$ to a  $d(e^r\lambda)$-compatible almost complex structure $\tilde J$ on $T(\R \times \Sigma)$ that is given by
 \begin{equation}\label{eq:SFTJ}
\tilde J \colon \partial_r \mapsto R, \quad \tilde J|_{\xi}=J  ,
\end{equation}
where $r$ denotes the coordinate on $\R.$ Note that $\tilde J$ is $\R$-invariant.  

Given a closed Riemann surface $(S, j)$ and a finite set $\Gamma \subset S,$ we consider a smooth map $\tilde u = (a,u) \colon  S \setminus \Gamma \to   \R \times \Sigma,  $ satisfying   $d\tilde u \circ j = \tilde J \circ d \tilde u$ and   a finite energy condition
\[
0< E(\tilde u):= \sup_{\phi} \int_{S \setminus \Gamma}  \tilde u^* d(\phi \lambda)<\infty,
\]
where the supremum is taken over all monotone increasing smooth functions $\phi \colon  \R \to [0,1].$   In this paper we only consider the case $S=S^2$ and $\# \Gamma \in \{ 1, 2 \}. $ If $ \# \Gamma = 1$ or $\# \Gamma=2,$ then such a map is called a finite energy plane or a finite energy cylinder, respectively.

Points in $\Gamma$ are called punctures of $\tilde u= (a,u).$ 
If $a$ is bounded in a small neighbourhood of a puncture $z_0 \in \Gamma,$ then $z_0$ is called removable. In this case, $\tilde u$ can be smoothly extended over $z_0$. See \cite{Hofer93}. Otherwise, either $a(z) \to +\infty$ or $a(z) \to -\infty$ as $z \to z_0. $ Then the puncture $z_0$ is called positive or negative, respectively. In the following we assume that every   puncture is not removable, so that the set $\Gamma$ is decomposed into $\Gamma = \Gamma^+ \sqcup \Gamma^-, $ where $\Gamma^\pm$ consist only of positive/negative punctures, respectively. We assign $\varepsilon \in \{ \pm 1\}$ to each puncture $z_0 \in \Gamma^\pm$ according to its sign.

The following statement is taken from  \cite{Hofer93} and \cite{HWZI}. 
\begin{theorem} 
Given $z_0 \in \Gamma$   choose a holomorphic chart $\varphi \colon ( \mathbb{D} \setminus \partial \mathbb{D}, 0) \to (\varphi(\mathbb{D} \setminus \partial \mathbb{D}), z_0)$ centred at $z_0.$ Abbreviate $\tilde u(s,t) = \tilde u \circ \varphi( e^{-2\pi(s+it)}), (s,t) \in [0,+\infty) \times \R / \Z.$ Then every sequence $s_n \to+\infty $ admits    a subsequence $s_{n_k}$ and a $\tau$-periodic  orbit $x$ of the Reeb vector field   such that $u(s_{n_k}, t) \to x(\varepsilon \tau t+d)   $ in $C^{\infty} (\R / \Z, \Sigma)$ as $k \to +\infty,$ for some $d \in \R.$  In the case  $x$ is non-degenerate,   $u(s , t) \to x(\varepsilon \tau t+d)   $ in $C^{\infty} (\R / \Z, \Sigma)$ as $s \to +\infty.$

 \end{theorem}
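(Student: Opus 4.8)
This is the standard asymptotic description of a finite energy curve near a non-removable puncture, and I would establish it in three stages: a bubbling-off argument bounding the gradient near $z_0$, a translation-and-compactness argument producing a trivial cylinder in the limit, and --- in the non-degenerate case --- an exponential-decay analysis of a perturbed Cauchy--Riemann equation along the limiting orbit. Throughout, fix the cylindrical coordinates $(s,t)\in[0,+\infty)\times\R/\Z$ of the statement and write $\tilde u=(a,u)$. From $E(\tilde u)<\infty$ I would first record two facts. The $d\lambda$-energy $\int_{[0,\infty)\times\R/\Z}u^*d\lambda$ is finite, so $\int_{[s,\infty)\times\R/\Z}u^*d\lambda\to0$ as $s\to\infty$. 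And, by Stokes together with the pointwise positivity of $u^*d\lambda$ for $\tilde J$-holomorphic maps, $m(s):=\int_{\{s\}\times\R/\Z}u^*\lambda$ is monotone in $s$ and bounded, hence converges to some number $m$; moreover $m\ne0$, since $m=0$ would make the puncture removable by Hofer's removable-singularity theorem \cite{Hofer93}, contrary to hypothesis. Put $\tau:=|m|>0$, and note $\tfrac{d}{ds}\!\int_{\R/\Z}a(s,\cdot)\,dt=m(s)\to m$, so $a$ grows like $m\,s$; the sign of $m$ is $+$ at a positive puncture and $-$ at a negative one, and this is the sign $\varepsilon$.

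\emph{Step 1: gradient bound.} I would show $\sup_{[s_0,\infty)\times\R/\Z}|\nabla\tilde u|<\infty$ for $s_0$ large, with respect to a fixed metric on $\R\times\Sigma$ and the cylindrical metric. Assume not: there are $(s_n,t_n)$ with $s_n\to\infty$ and $|\nabla\tilde u(s_n,t_n)|\to\infty$. Hofer's rescaling lemma yields points $z_n'$ near $(s_n,t_n)$ and radii $\delta_n\to0$ with $\delta_n|\nabla\tilde u(z_n')|\to\infty$ and $|\nabla\tilde u|\le2|\nabla\tilde u(z_n')|$ on $B_{\delta_n}(z_n')$. Rescaling $\tilde u$ by $|\nabla\tilde u(z_n')|$ about $z_n'$ and translating in the $\R$-factor (legitimate since $\tilde J$ is $\R$-invariant) produces $\tilde J$-holomorphic maps with gradient bounded by $2$ and equal to $1$ at the origin, defined on exhausting discs of $\C$; elliptic estimates and Arzel\`a--Ascoli extract a $C^\infty_{\mathrm{loc}}$-limit $\tilde v\colon\C\to\R\times\Sigma$ which is a non-constant finite energy plane with $E(\tilde v)\le E(\tilde u)$. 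Because the rescalings are centred at points with $s\to\infty$, where the local $d\lambda$-energy tends to $0$, we get $\int_\C v^*d\lambda=0$; but then $dv$ maps into the Reeb line everywhere, which forces $\tilde v$ to be constant by a short Liouville argument --- a contradiction. Given the gradient bound, elliptic bootstrapping yields uniform $C^k$-bounds for $\tilde u$ on $[s_0,\infty)\times\R/\Z$ for every $k$.

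\emph{Step 2: subsequential limit.} Given $s_n\to\infty$, consider the translates $\tilde u_n(s,t):=\bigl(a(s+s_n,t)-a(s_n,0),\,u(s+s_n,t)\bigr)$ on $[-s_n,\infty)\times\R/\Z$. By the $C^\infty$-bounds and Arzel\`a--Ascoli a subsequence converges in $C^\infty_{\mathrm{loc}}(\R\times\R/\Z)$ to a $\tilde J$-holomorphic cylinder $\tilde u_\infty=(a_\infty,u_\infty)$ with $\int_{\R\times\R/\Z}u_\infty^*d\lambda=0$. As in Step 1 this forces $du_\infty$ into the Reeb line; combined with the Cauchy--Riemann equations and the bounded-gradient property, this pins down $a_\infty(s,t)=m\,s+c$ and $u_\infty(s,t)=x(\varepsilon\tau t+d)$ independent of $s$, where $x$ is a Reeb orbit of period $\tau=|m|$ and $d\in\R$; here $x$ is non-constant precisely because $\int_{\{0\}\times\R/\Z}u_\infty^*\lambda=m\ne0$. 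Restricting the convergence $\tilde u_n\to\tilde u_\infty$ to $\{0\}\times\R/\Z$ gives $u(s_{n_k},t)\to x(\varepsilon\tau t+d)$ in $C^\infty(\R/\Z,\Sigma)$, which is the first assertion.

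\emph{Step 3: full convergence when $x$ is non-degenerate.} Here I would invoke the asymptotic analysis of \cite{HWZI}. Fix a tubular neighbourhood of $x$ with coordinates $(\vartheta,w)\in\R/\tau\Z\times\R^2$ in which $x=\{w=0\}$, the Reeb flow is generated by $\partial_\vartheta$ to leading order, and $\xi$ is modelled on the $w$-plane; for $s$ large write $\tilde u(s,t)=(a(s,t),\vartheta(s,t),w(s,t))$ with $w(s_{n_k},\cdot)\to0$. The Cauchy--Riemann equation takes the form $\partial_s w+J_0\partial_t w+S(s,t)w=r(s,t,w)$, where $r$ is of higher order and the symmetric loops $S(s,\cdot)$ converge as $s\to\infty$ to the one generating the linearised Reeb flow of $x$; non-degeneracy of $x$ means exactly that the limiting self-adjoint operator $L_\infty=-J_0\partial_t-S_\infty$ on $L^2(\R/\Z,\R^2)$ has trivial kernel. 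A Carleman-type estimate then yields the dichotomy that either $w\equiv0$ near the puncture or $w(s,t)=e^{\nu s}\bigl(e(t)+o(1)\bigr)$ as $s\to\infty$, with $\nu<0$ an eigenvalue of $L_\infty$ and $e$ an associated eigenfunction; in either case $w(s,\cdot)\to0$ as $s\to\infty$, not merely along the subsequence, and the equations for $\vartheta$ and $a$ then fix the full limit $u(s,\cdot)\to x(\varepsilon\tau\cdot+d)$ in $C^\infty(\R/\Z,\Sigma)$. I expect this last step to be the main obstacle: the exponential-decay dichotomy requires the full functional-analytic apparatus of \cite{HWZI} --- an asymptotically autonomous Cauchy--Riemann operator, spectral-gap estimates, and elliptic regularity for the limiting profile --- and it is precisely here that non-degeneracy is indispensable, since without it $\ker L_\infty\ne0$ and the limits along different sequences $s_n\to\infty$ need not agree.
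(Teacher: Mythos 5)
This theorem is stated in the paper without proof; it is quoted verbatim from \cite{Hofer93} and \cite{HWZI}, and your sketch faithfully reproduces the standard argument of those references (finiteness and nonvanishing of the mass plus removability, bubbling-off gradient bounds, shift-and-compactness to a trivial cylinder over a Reeb orbit, and the exponential-decay analysis under non-degeneracy). So your proposal is correct and takes essentially the same route as the paper's cited source.
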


  The periodic orbit $x$ in the theorem above is called an asymptotic limit of $\tilde u$ at the puncture  $z_0 \in \Gamma.$  If the puncture $z_0$ is positive or negative, the corresponding asymptotic limit is also called positive or negative, respectively. 
A finite energy curve is called non-degenerate if its every asymptotic limit is non-degenerate. The theorem above shows that a non-degenerate finite energy plane admits a unique asymptotic limit.

We now assume that $(\Sigma, \lambda)$ is equipped with an anti-contact involution $\rho.$ It defines the exact anti-symplectic involution $\tilde \rho := {\rm Id}_\R \times \rho$ on $(\R \times \Sigma, d( e^r \lambda)),$   meaning that $\tilde \rho^* ( e^r \lambda) = -e^r \lambda.$  An almost complex structure $J \in \mathcal{J}(\lambda)$ is said to be $\rho$-anti-invariant if
\begin{equation}\label{eq:Jrhoanti}
    D\rho|_\xi \circ J \circ ( D\rho|_\xi)^{-1}=-J.
\end{equation}
Denote by $\mathcal{J}_\rho(\lambda) \subset \mathcal{J}(\lambda)$ the set of $d\lambda$-compatible and $\rho$-anti-invariant almost complex structures on $\xi.$ If $J \in \mathcal{J}_\rho(\lambda),$   then the associated $d(e^r \lambda)$-compatible almost complex structure $\tilde J$ as in \eqref{eq:SFTJ} is $\tilde \rho$-anti-invariant. 
It follows that if $\tilde u = (a,u)$ is a finite energy $\tilde J$-holomorphic   curve, then $\tilde u_{\rho} := \rho \circ \tilde u \circ I,$ where $I(z) = \bar z$, is a finite energy $\tilde J$-holomorphic curve with Hofer energy $E(\tilde u_\rho) = E(\tilde u).$ Moreover, if   $x$ is a positive or negative asymptotic limit of $\tilde u$ at $z_0 \in \Gamma$, then   $x_\rho $ is a positive or negative asymptotic limit of $\tilde u_\rho $ at $\bar{z}_0 \in I(\Gamma),$ respectively.  If $\tilde u = \tilde u_\rho,$ then  it is said to be invariant (with respect to $\rho$). Note that every asymptotic limit of  invariant finite energy planes and cylinders has to be a symmetric periodic orbit. 
 For more information on the behaviour of pseudoholomorphic curves under the symmetry, we refer the reader to \cite{FK16, Kim20}.





\subsection{Transverse  foliations}

Let $\psi^t$  be a smooth flow on an oriented closed three-manifold $\Sigma.$ A transverse foliation $\mathcal{F}$ of $\Sigma$ adapted to $  \psi^t $ is formed by the following data:
\begin{enumerate}
    \item the singular set $\mathcal{P}$ which   consists of finitely  many simple periodic  orbits,  called binding orbits.

    \item a smooth foliation of $\Sigma \setminus \cup_{P \in  \mathcal{P}}P$ by properly embedded   surfaces  transverse to the  flow. Each leaf $\mathring{F} \in \mathcal{F}$ has an orientation induced by the flow and the orientation of $\Sigma$. The closure $F$ of $\mathring{F}$ is a compact embedded surface in $\Sigma$ whose boundary is a subset of $\mathcal{P}.$ Each end $z$ of $\mathring{F}$ is called a puncture. Every puncture $z$ is associated with a binding orbit $P_z \in \mathcal{P},$ called the asymptotic limit of $\mathring{F}$ at $z.$  The asymptotic limit $P_z$ at $z$ has two orientations, one induced by $\mathring{F}$ and the other induced by the flow. If the two orientations coincide with each other, then the puncture $z$ is said to be positive. Otherwise it is negative. 
    \end{enumerate}

In the case where $\Sigma$ has a non-empty boundary that is tangent to the flow, then we extend the definition of a transverse foliation in such a way that     every binding orbit is required to be contained in the interior of $\Sigma$ and each regular leaf is transverse to the boundary of $\Sigma$.


Assume that the flow $\psi^t$ is anti-invariant under a smooth involution $\rho \colon \Sigma \to \Sigma,$ meaning that $\psi^t = \rho\circ \psi^{-t} \circ \rho.$ A transverse foliation $\mathcal{F}$     is said to be symmetric (with respect to $\rho$) if  
$P_\rho \in \mathcal{P},$ $\forall P \in \mathcal{P},$ and $\mathring{F}_\rho :=\rho(\mathring{F}) \in \mathcal{F},$ $\forall \mathring{F} \in \mathcal{F}.$

\subsection{Finite energy foliations}  
Let $\lambda$ be a   contact form on the tight three-sphere $(S^3, \xi_0) $ and   $J \in \mathcal{J}(\lambda).$ 

\begin{definition}
A (stable) finite energy foliation for $(S^3, \lambda, J)$ is a two-dimensional smooth foliation $\tilde{\mathcal{F}}$ of $\R \times S^3$ satisfying the following properties:
\begin{enumerate}
    \item Every leaf $\tilde F \in \tilde{\mathcal{F}}$ is the image of an embedded finite energy $\tilde J$-holomorphic curve $\tilde u_{\tilde F}=(a_{\tilde F}, u_{\tilde F})  $ having  only one positive puncture. The number of negative punctures  is any finite number.  The energies of such finite energy surfaces are uniformly bounded.
    
    \item Every asymptotic limit of $\tilde F$  is unknotted and has Conley-Zehnder index $1,2$ or $3,$ 
     where  asymptotic limits of a leaf $\tilde F $ is defined to be    asymptotic limits of $\tilde u_{\tilde F}$.  
    
    \item   A  map $T \colon \R \times \tilde{\mathcal{F}} \to \tilde{\mathcal{F}},$
    \[
    T(c,\tilde F) = \{ (c + a , z) \in \R \times S^3 \mid (a, z) \in \tilde F \},
    \]
    defines an $\R$-action on $\tilde{\mathcal{F}}.$ If $\tilde F$ is a fixed point of $T$, i.e.\ $T(c, \tilde F) = \tilde F, \forall c \in \R,$  then its Fredholm index ${\rm Ind}(\tilde F):= {\rm Ind}(\tilde u_{\tilde F})$  is equal to $0$ and $\tilde u_{\tilde F}$ is a cylinder over a periodic orbit.  Here, the Fredholm index ${\rm Ind}(\tilde u_{\tilde F})$ represents the local dimension of the moduli space of unparametrised $\tilde J$-holomorphic curves near $\tilde u_{\tilde F}$ having the same asymptotic limits.         In the case $\tilde F$ is not a fixed point, then  ${\rm Ind}(\tilde F) \in \{ 1, 2\} $ and  $u_{\tilde F}$ is embedded and transverse to the Reeb flow of $\lambda.$ 
\end{enumerate}
\end{definition}

Every non-degenerate contact form $\lambda$ admits a finite energy foliation, as stated below.

\begin{theorem}[{Hofer, Wysocki and Zehnder \cite{HWZ03foliation}}]
 Let $\lambda$ be a non-degenerate contact form on $(S^3, \xi_0). $ Then for a generic $J,$ there is a   finite energy foliation for $(S^3, \lambda, J).$
\end{theorem}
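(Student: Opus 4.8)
Since this is a celebrated result of \cite{HWZ03foliation}, I only indicate the strategy one would follow. The plan is to run a continuation argument in the space of contact forms with kernel $\xi_0$. First I would fix an explicit reference contact form $\lambda_0$ on $S^3$ — for instance the one induced on an irrational ellipsoid $\{\,\pi a\lvert z_1\rvert^2 + \pi b\lvert z_2\rvert^2 = 1\,\}$ with $a/b\notin\mathbb{Q}$ — whose Reeb flow has exactly two simple periodic orbits $P_1,P_2$ forming a Hopf link, and for which a finite energy foliation is written down by hand: the binding set is $\{P_1,P_2\}$, and the regular leaves form a one-parameter family (modulo $\R$-translation) of embedded holomorphic cylinders, each with a positive puncture at $P_1$ and a negative puncture at $P_2$, together with the two rigid trivial cylinders over $P_1$ and $P_2$. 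Then I would join $\lambda_0$ to the given $\lambda$ by a smooth path $(\lambda_\tau)_{\tau\in[0,1]}$ with $\ker\lambda_\tau=\xi_0$ and a path $J_\tau\in\mathcal{J}(\lambda_\tau)$, chosen generically so that along the path every Reeb orbit below a fixed action threshold is non-degenerate except at finitely many parameters, where a single birth--death or period-doubling bifurcation occurs, and so that all relevant genus-zero finite energy moduli spaces of Fredholm index at most $2$ are cut out transversally. This genericity is exactly where the hypothesis "for a generic $J$" is used. Letting $A\subset[0,1]$ be the set of parameters for which $(S^3,\lambda_\tau,J_\tau)$ admits a finite energy foliation, one has $0\in A$ by construction, and it remains to prove that $A$ is both open and closed.

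Carrying this out requires the standard analytic package for pseudoholomorphic curves in symplectisations: Fredholm theory in weighted Sobolev spaces, with the index of a genus-zero finite energy curve expressed through the Conley--Zehnder indices of its asymptotic limits and its Euler characteristic; the asymptotic formula giving exponential convergence to Reeb orbits and the associated asymptotic winding numbers, which at positive (respectively negative) punctures are bounded above (respectively below) by the extremal winding of the orbit; automatic transversality in dimension four, which makes index-$\le 2$ immersed curves with the appropriate relation between normal Chern number, genus and number of punctures Fredholm regular, so that the index-$2$ leaves and the ``fast'' finite energy planes are automatically regular and the index-$0$ leaves are exactly trivial cylinders over non-degenerate orbits; and positivity of intersections refined by the asymptotic intersection pairing, which forces two distinct leaves to be disjoint and each leaf to be embedded. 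On a tight $S^3$ the qualitative inputs that make everything work are that there are no non-constant finite energy spheres and that every relevant asymptotic limit is an unknot with $\muCZ\in\{1,2,3\}$, the latter coming from the self-linking number $-1$ of such orbits together with the winding estimates.

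\emph{Openness.} Suppose $\tau_0\in A$ with foliation $\tilde{\mathcal{F}}_{\tau_0}$. Away from a bifurcation parameter, each non-rigid leaf is a somewhere-injective finite energy curve of index $1$ or $2$ with non-degenerate asymptotics, hence lives in a smooth moduli space of the expected dimension that persists under small perturbations of $(\lambda,J)$ by automatic transversality, while the binding orbits persist by non-degeneracy. Since the leaves foliate $\R\times S^3$ and are pairwise disjoint, the implicit function theorem produces a nearby family of curves for $\tau$ close to $\tau_0$, and a degree-theoretic argument (every point lies on a unique leaf) shows this family is again a finite energy foliation. At a bifurcation parameter I would first perturb the foliation on each side and then match the two descriptions across the bifurcation by a local surgery of the foliation near the orbit that is being born or period-doubled.

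\emph{Closedness — the main obstacle.} Take $\tau_n\to\tau_\infty$ in $A$ with foliations $\tilde{\mathcal{F}}_n$. The periods of the binding orbits, and hence the energies of all leaves, remain uniformly bounded along the compact path (they are controlled by a fixed action threshold), so an SFT-type compactness theorem applies: after passing to a subsequence, the leaves of $\tilde{\mathcal{F}}_n$ converge to pseudoholomorphic buildings for $(\lambda_{\tau_\infty},J_{\tau_\infty})$. The crux is to show that these buildings reassemble into a genuine finite energy foliation $\tilde{\mathcal{F}}_{\tau_\infty}$, possibly with a binding set different from that of the $\tilde{\mathcal{F}}_n$. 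For this one must verify that no region of $\R\times S^3$ is lost in the limit, via a continuity argument for the map sending a point to the leaf through it; that bubbling is controlled — tightness rules out spheres, and any plane that bubbles off has a positive puncture at a Reeb orbit that then enters the new binding set; and that the limit leaves are still embedded and pairwise disjoint, because positivity of intersections together with the asymptotic intersection estimates is a closed condition under limits. I expect this closedness step, and in particular the bookkeeping that records how the binding set restructures when Reeb orbits bifurcate or grow long along the path, to be the genuinely hard part; the openness step and the underlying Fredholm, asymptotic and intersection theory, while technically heavy, are by now standard.
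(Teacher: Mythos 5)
First, note that the paper does not prove this statement at all: it is quoted verbatim from \cite{HWZ03foliation}, so there is no internal argument to compare yours with. Measured against the actual HWZ proof, your global scheme (start from an explicit model foliation, join it to the given data by a path, and run an openness/closedness continuation using Fredholm theory, asymptotic winding estimates, automatic transversality, positivity of intersections and SFT-type compactness) is indeed the strategy of \cite{HWZ03foliation}, where one writes $\lambda = f\lambda_0$, homotopes $f$ to $1$, and tracks how the foliation and its binding restructure at bifurcations. But what you have written is an outline, not a proof: the closedness step --- showing that the limiting holomorphic buildings reassemble into a foliation of all of $\R\times S^3$, with the rebinding bookkeeping when orbits bifurcate along the path --- is precisely the mathematical content of the theorem, and you explicitly defer it (``I expect this \dots to be the genuinely hard part''). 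The same goes for the bifurcation surgery in the openness step. As it stands the hard part is assumed rather than proved.

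There is also a concrete error in your starting point. On an irrational ellipsoid both simple Reeb orbits have odd Conley--Zehnder index ($3$ for the short orbit and $2\lfloor b/a\rfloor+1$ for the long one), so there is no index-$2$ orbit, and a cylinder with positive puncture at $P_1$ and negative puncture at $P_2$ has Fredholm index $\mu_{\rm CZ}(P_1)-\mu_{\rm CZ}(P_2)=0$ in the dynamically convex range $b/a<2$; such leaves are rigid modulo the $\R$-action and cannot sweep out a one-parameter family foliating the complement of the trivial cylinders. If instead $b/a>2$, the long orbit has $\mu_{\rm CZ}\geq 5$, violating the requirement (built into the definition of a stable finite energy foliation used in this paper) that every asymptotic limit have index $1$, $2$ or $3$. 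The correct explicit model is the open book of disc-like pages, i.e.\ index-$2$ finite energy planes asymptotic to the short orbit of index $3$ (equivalently, HWZ's Bishop-family starting point for a perturbation of the standard tight contact form), not a cylinder foliation over the Hopf link. Finally, be careful that the genericity in the statement is of $J$ for the \emph{given} non-degenerate $\lambda$; your construction perturbs the contact form along the whole path, so you must arrange the path genericity while keeping the endpoint $\lambda$ fixed, as HWZ do.
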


The projection of a finite energy foliation, obtained in the theorem above, to $S^3$ provides a transverse foliation  adapted to the Reeb flow of $\lambda.$ 

Suppose that $(S^3, \lambda)$ admits an anti-contact involution $\rho$ and that $J \in \mathcal{J}_\rho(\lambda).$ A finite energy foliation $\tilde{\mathcal{F}}$ is said to be symmetric (with respect to $\rho$) if $\tilde{\rho}(\tilde{\mathcal{F}}) = \tilde{\mathcal{F}}.$ The projection of a symmetric finite energy foliation to $S^3$ produces a symmetric transverse foliation.

   \section{The rotating Kepler problem}\label{sec:RKP}
Recall that the Hamiltonian of the rotating Kepler problem is given by
   \[ 
   H(q_1, q_2, p_1,p_2) = \frac{1}{2} ( (p_1 +q_2)^2 + (p_2 - q_1)^2) - \frac{1}{\lvert q\rvert}- \frac{1}{2} \lvert q \rvert^2.
\]
  Consider a smooth function
  \begin{equation*}\label{f}
  f(r) = -\frac{1}{ r} - \frac{1}{2}r^2, \quad r>0.
  \end{equation*}
  It  is strictly increasing for $r<1$  and decreasing for $r>1$  and   attains a unique global maximum $-\frac{3}{2}$ at $r=1.$ 
  Therefore,   for every $c<-\frac{3}{2}$ the equation $f(r)=c$ has precisely two roots $ r_c^b < 1 < r_c^u.$  It follows that for every $c<-\frac{3}{2}$  the energy level $H^{-1}(c)$ consists of two components, $\Sigma_c^b$ that projects to   $\{ q \in \R^2 \setminus \{ 0 \} : \lvert q \rvert \leq r_c^b\}$ and   $\Sigma_c^u$  that projects to    $\{ q \in \R^2 \setminus \{ 0 \} : \lvert q \rvert \geq r_c^u\}.$ 
           If $ -\frac{3}{2} < c < 0$, then  $H^{-1}(c)$ is connected and projects to $\R ^2 \setminus \{ 0 \}.$

   Since $\{ E, L \} =0$, 
where the Kepler energy $E$ and the angular momentum $L$ are as in \eqref{eq:ELoriginal},   
   the Hamiltonian flows of $E$ and $L$ commute:
   \begin{equation*}\label{eq:flowcommute}
   \phi_H^t = \phi_{E-L}^t= \phi_E^t \circ \phi_{-L}^t = \phi_{-L}^t \circ \phi_E^t = \phi_L^{-t} \circ \phi_E^t.
   \end{equation*}
See, for instance, \cite[Theorem 3.1.7]{FvK18book}.
      Therefore,  given a   trajectory $\gamma(t)$ of $H$, i.e.\ a trajectory of the Hamiltonian vector field $X_H,$ there exists a Kepler ellipse $\alpha(t) $ such that
   \begin{equation}\label{eq:gammaalpha}
   \gamma(t) = \exp(- it)\alpha(t).
   \end{equation}
   Note that $\gamma$ is not necessarily periodic.  It is periodic if and only if the period of $\alpha$ and $2 \pi $    are $\Q$-commensurable. In other words, if we denote the period of $\alpha$ by $T>0$, then $\gamma$ is periodic if and only if 
   \begin{equation}\label{eq:torusorbitperiod}
   kT = 2 \pi l \quad \text{ for some coprime positive integers } k,l.
   \end{equation}

   Let $\alpha(t)$ be a Kepler ellipse   that projects to a circular orbit. Then  $\gamma(t),$ as in \eqref{eq:gammaalpha}, is circular as well. This happens if and only if $X_E$ and $X_L$ are parallel along $\alpha(t)$.    In order to study circular orbits, recall  that the eccentricity $e$ of a Kepler ellipse satisfies
   \begin{equation}\label{eq:e}
   e^2 = 2EL^2 +1.
   \end{equation}
  Since circular orbits have $e=0$, we obtain
  \begin{equation}\label{eq:ecc}
  0 = 2 E ( H-E)^2+1.
  \end{equation}
  
Fix $H=c$. If $c< -\frac{3}{2}$, then equation \eqref{eq:ecc} has three solutions 
\[
E_{\rm retro}^b < E_{\rm direct}^b < - \frac{1}{2} < E_{\rm direct}^u <0,
\]
implying that $H^{-1}(c)$ carries precisely three circular orbits:    $\gamma_{\rm retro}^b$  with  $E = E_{\rm retro}^b$ and  $\gamma_{\rm direct}^b$ with $E = E_{\rm direct}^b,$ both lying  on   $\Sigma_c^b,$  and  $\gamma_{\rm direct}^u$ with $E = E_{\rm direct}^u,$ lying  on   $\Sigma_c^u.$ Their angular momenta are
\begin{equation*}\label{eq:Lnumbers}
L _{\rm retro}^b  =  - \frac{1}{\sqrt{ -2 E_{\rm retro}^b}}, \quad L _{\rm direct}^b  =   \frac{1}{\sqrt{ -2 E_{\rm direct}^b}}, \quad L _{\rm direct}^u  = \frac{1}{\sqrt{ -2 E_{\rm direct}^u}},
\end{equation*}
respectively.
    Note that our coordinate system rotates in the counterclockwise direction. Therefore, $\gamma_{\rm retro}^b$ rotates in opposite direction to the coordinate system and $\gamma_{\rm direct}^b$ and $\gamma_{\rm direct}^u$ rotate in the same direction with the coordinate system. For this reason, $\gamma_{\rm retro}^b$ is called the retrograde circular orbit and $\gamma_{\rm direct}^b$ and $\gamma_{\rm direct}^u$ are called the direct circular orbits.

   Assume that $ -\frac{3}{2} < c < 0$.   In this case, equation  \eqref{eq:ecc} has a unique solution $ E = E_{\rm retro} ^u\in (-2,0)$, implying that $H^{-1}(c)$  carries a unique circular orbit, denoted by $\gamma_{\rm retro}^u$,
  whose  angular momentum is  given by
 \begin{equation*}\label{eq:Lnumberss}
 L _{\rm retro}^u  =  -  \frac{1}{\sqrt{ -2 E_{\rm retro}^u}} \in ( -\frac{1}{\sqrt[3]{2}}, - \frac{1}{2}).
 \end{equation*}
    This unique circular orbit is retrograde.  See Figure \ref{fig1}.

          \begin{figure}[ht]
  \centering
  \includegraphics[width=1.0\linewidth]{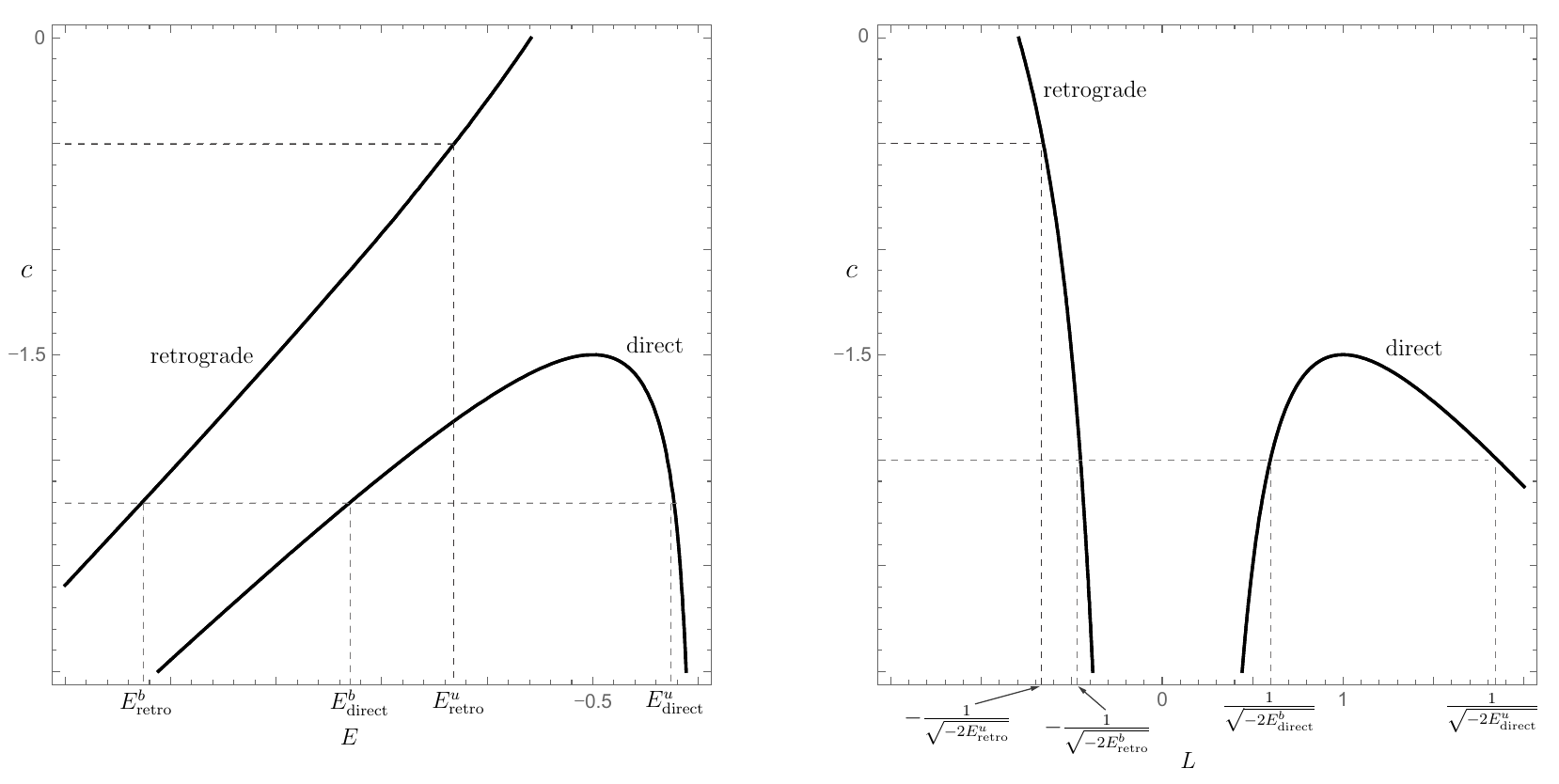}
 \caption{  The solution set of (left) $2E(c-E)^2+1=0$; (right)  $2(c+L)L^2+1=0$ is the union of two graphs. The left and right curves indicate the retrograde and direct circular orbits, respectively. }
\label{fig1}
\end{figure}

   Suppose that $\alpha(t)$ is a non-circular Kepler ellipse  and   let $\gamma(t)$ be the corresponding periodic  trajectory of $H$,  satisfying \eqref{eq:gammaalpha} and  \eqref{eq:torusorbitperiod}.
      Since $L$ generates rotation, it implies that $\gamma(t)$ lies in an $S^1$-family of  periodic   trajectories of $H$ satisfying  the same  condition.    For this reason   we call a periodic trajectory of $H$ satisfying \eqref{eq:torusorbitperiod} a $T_{k,l}$-type orbit  and an  $S^1$-family of $T_{k,l}$-type orbits a $T_{k,l}$-torus. 
      
 The Kepler energy    is constant along each $T_{k,l}$-torus. Indeed, Kepler's third law implies
 \[
 T^2 = \frac{ (2\pi)^2}{(-2E)^3},
 \]
 and hence using \eqref{eq:torusorbitperiod} we obtain
 \begin{equation*}\label{Eenergy}
 E_{k,l} := -\frac{1}{2} \left( \frac{k}{l} \right)^{\frac{2}{3}}.
 \end{equation*}
 One can easily check that  in the case $  c<-\frac{3}{2} $ all $T_{k,l}$-type orbits  satisfy   $k>l$ on $\Sigma_c^b$ and $k<l$   on $\Sigma_c^u.$

For a fixed coprime $k,l \in \N,$  identity \eqref{eq:e} determines a one-parameter family of $T_{k,l}$-tori. Either   the eccentricity $e$, the angular momentum $L$ or the total energy $H=c$ can be regarded as the parameter of such a family.   We call this family the $T_{k,l}$-torus family.   See Figure \ref{fig2}.

          \begin{figure}[ht]
  \centering
  \includegraphics[width=0.6\linewidth]{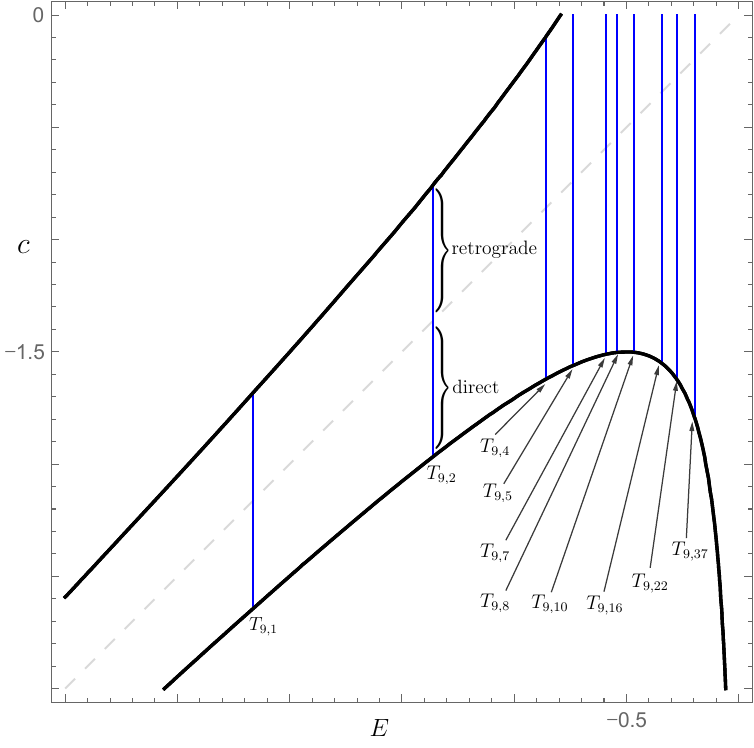}
  \caption{Some $T_{k,l}$-tori with $k=9.$ The dashed line indicates the tori consisting only of collision orbits.}
  \label{fig2}
\end{figure}

 Observe that along every $T_{k,l}$-torus family, there is a unique $T_{k,l}$-torus containing a collision orbit (and hence it consists only of collision orbits). Indeed, a $T_{k,l}$-type orbit $\gamma(t)$ is a collision orbit if and only if a Kepler ellipse $\alpha(t)$ is a collision orbit. This happens precisely at $e=1,$ and in view of \eqref{eq:e} this implies $L=0,$ or equivalently $H=E.$  A $T_{k,l}$-type orbit is said to be direct or retrograde if it    satisfies $L>0$ or $L<0,$ respectively.

 We summarize the discussion so far in the following. Recall that we consider orbits with $E<0$.
 
    \begin{lemma}\label{lem:EL}
    Let $c < -\frac{3}{2}$.      When projected to $\Sigma_c^b,$ the Kepler energy $E$ satisfies
    \[
    E_{\rm retro}^b \leq E \leq E_{\rm direct}^b <-\frac{1}{2}
    \]
    and the angular momentum $L$ satisfies
    \[
   L_{\rm retro}^b  \leq L \leq L_{\rm direct}^b <1.
    \]
           When projected to $\Sigma_c^u$,  we have   
        \[
-\frac{1}{2}<         E_{\rm direct}^u \leq E <0 , \quad  \ \   1<  L _{\rm direct}^u  \leq L  <-c.
\]
                In the case $-\frac{3}{2} < c < 0$,  we have
                \[
                 -2 <   E_{\rm retro}^u \leq E <0, \quad \ \      -\frac{1}{\sqrt[3]{2}} <  L _{\rm retro}^u  \leq L   <-c.
     \]
     \end{lemma}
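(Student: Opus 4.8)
The plan is to treat the three cases separately, in each case parametrizing circular orbits and non-circular $T_{k,l}$-tori by the Kepler energy $E$ and exploiting the monotone relations between $E$, $L$, and the total energy $c$. First recall from \eqref{eq:gammaalpha} that every trajectory of $H$ comes from a Kepler ellipse $\alpha(t)$, so the Kepler energy $E$ of any orbit on $H^{-1}(c)$ is the Kepler energy of the underlying ellipse. The values $E_{\rm retro}^b<E_{\rm direct}^b<-\tfrac12<E_{\rm direct}^u<0$ (when $c<-\tfrac32$) and $E_{\rm retro}^u\in(-2,0)$ (when $-\tfrac32<c<0$) were already obtained as the roots of the eccentricity equation \eqref{eq:ecc}, namely $2E(c-E)^2+1=0$, so the circular orbits realize the claimed endpoints by definition. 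The content of the lemma is therefore the two-sided bound: that these circular values are in fact the extreme values of $E$ (and of $L$) attained on each component.

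For the $E$-bounds I would argue as follows. On a fixed component of $H^{-1}(c)$, write $L=c-E$ (from \eqref{eq:ELoriginal}, since $L=H-E$ on the energy level), and recall from \eqref{eq:e} that the eccentricity satisfies $e^2=2EL^2+1=2E(c-E)^2+1$. Since $e^2\in[0,1)$ for the bounded Kepler motions we consider ($E<0$), the admissible values of $E$ on $H^{-1}(c)$ are exactly those with $0\le 2E(c-E)^2+1<1$, i.e. $-\tfrac{1}{2E}\le (c-E)^2$ together with $E<0$. Analyzing the cubic $g(E):=2E(c-E)^2+1$ on $(-\infty,0)$ — which vanishes at the three (resp. one) roots listed above and whose sign pattern is determined by elementary calculus — shows that the set $\{E<0: g(E)\le 0\}$ consists of the interval $[E_{\rm retro}^b,E_{\rm direct}^b]$ together with $[E_{\rm direct}^u,0)$ when $c<-\tfrac32$, and of $[E_{\rm retro}^u,0)$ when $-\tfrac32<c<0$. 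Matching these with the two connected components $\Sigma_c^b$ (projecting to $|q|\le r_c^b<1$) and $\Sigma_c^u$ (projecting to $|q|\ge r_c^u>1$), and using Kepler's third law $T^2=(2\pi)^2/(-2E)^3$ together with the observation recorded before the lemma that $T_{k,l}$-orbits satisfy $k>l$ on $\Sigma_c^b$ and $k<l$ on $\Sigma_c^u$ (so $E=E_{k,l}=-\tfrac12(k/l)^{2/3}$ is $<-\tfrac12$ on $\Sigma_c^b$ and $>-\tfrac12$ on $\Sigma_c^u$), pins down which interval belongs to which component. This yields $E_{\rm retro}^b\le E\le E_{\rm direct}^b<-\tfrac12$ on $\Sigma_c^b$, $-\tfrac12<E_{\rm direct}^u\le E<0$ on $\Sigma_c^u$, and $-2<E_{\rm retro}^u\le E<0$ in the connected case.

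For the $L$-bounds I would convert the $E$-bounds using $L=c-E$: the map $E\mapsto L=c-E$ is strictly decreasing, so on $\Sigma_c^b$ we get $c-E_{\rm direct}^b\le L\le c-E_{\rm retro}^b$, and one checks directly from the defining relations $L_{\rm retro}^b=-1/\sqrt{-2E_{\rm retro}^b}$, $L_{\rm direct}^b=1/\sqrt{-2E_{\rm direct}^b}$ (equivalently from $e=0\Leftrightarrow 2EL^2+1=0$) that $c-E_{\rm direct}^b=L_{\rm direct}^b$ and $c-E_{\rm retro}^b=L_{\rm retro}^b$; the inequality $L_{\rm direct}^b<1$ follows from $E_{\rm direct}^b<-\tfrac12$. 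On $\Sigma_c^u$, $E<0$ forces $L=c-E>c$, giving the upper bound $L<-c$ (recall $-c>0$), while $E\ge E_{\rm direct}^u$ gives $L\le c-E_{\rm direct}^u=L_{\rm direct}^u$, and $E_{\rm direct}^u>-\tfrac12$ gives $L_{\rm direct}^u>1$; the connected case is identical with $E_{\rm retro}^u$ in place of $E_{\rm direct}^u$, and $L_{\rm retro}^u>-1/\sqrt[3]{2}$ comes from $E_{\rm retro}^u>-2$. The main obstacle is purely bookkeeping: carefully verifying the sign chart of the cubic $g(E)=2E(c-E)^2+1$ for the two ranges of $c$ and making sure the algebraic identities $c-E_\bullet=L_\bullet$ at the circular values are consistent with the square-root formulas for $L_\bullet$; once that is done, everything else is a short monotonicity argument.
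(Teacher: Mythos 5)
Your overall strategy (eccentricity identity $e^2=2EL^2+1$, roots of the cubic $2E(c-E)^2+1$, then matching the two admissible $E$-intervals to the two components and converting to $L$-bounds via the linear relation on the energy level) is exactly the route the paper takes, since the lemma there is just a summary of the preceding discussion. However, as written your argument contains a sign error that makes several steps false. From \eqref{eq:RKPHAM} and \eqref{eq:ELoriginal} one has $H=E-L$, so on $H^{-1}(c)$ the relation is $L=E-c$, not $L=c-E$ (and not ``$L=H-E$''). Consequently the identities you propose to ``check directly'', $c-E_{\rm direct}^b=L_{\rm direct}^b$ and $c-E_{\rm retro}^b=L_{\rm retro}^b$, are false (each right-hand side is the negative of the left-hand side; e.g.\ for $c=-2$ one finds $E_{\rm direct}^b\approx-1.40$, $L_{\rm direct}^b\approx+0.6$, while $c-E_{\rm direct}^b\approx-0.6$). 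On $\Sigma_c^u$ the error reverses the direction of the inequality: from $E\geq E_{\rm direct}^u$ and $L=c-E$ you deduce $L\leq L_{\rm direct}^u$, whereas the lemma asserts $L_{\rm direct}^u\leq L$; and ``$L=c-E>c$'' does not yield $L<-c$. With the correct relation $L=E-c$ (an increasing function of $E$) everything does fall into place: $E\geq E_{\rm direct}^u$ gives $L\geq E_{\rm direct}^u-c=L_{\rm direct}^u$ and $E<0$ gives $L<-c$, and similarly on $\Sigma_c^b$. So the $L$-half of your proof needs this systematic sign repair.

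There are two further slips in the $E$-half. First, the admissibility condition $e^2\geq0$ with $E<0$ reads $(c-E)^2\leq -\tfrac{1}{2E}$, i.e.\ $g(E):=2E(c-E)^2+1\geq 0$; your restatement reverses this inequality, and the set you then describe, $\{E<0: g(E)\leq 0\}$, is in fact $(-\infty,E_{\rm retro}^b]\cup[E_{\rm direct}^b,E_{\rm direct}^u]$ -- the interval decomposition $[E_{\rm retro}^b,E_{\rm direct}^b]\cup[E_{\rm direct}^u,0)$ that you write down is that of $\{E<0: g(E)\geq 0\}$. The final answer is the right one, but the justification as stated is self-contradictory and should be rewritten with the sign of $g$ tracked correctly (note $g(0)=1>0$ and $g\to-\infty$ as $E\to-\infty$, with signs alternating between the simple roots). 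Second, in the case $-\tfrac32<c<0$ the bound $L_{\rm retro}^u>-\tfrac{1}{\sqrt[3]{2}}$ does not follow from $E_{\rm retro}^u>-2$ (that only gives $L_{\rm retro}^u<-\tfrac12$); it is equivalent to $E_{\rm retro}^u<-2^{-1/3}$, which one gets for instance by checking $g(-2^{-1/3})=1-2^{2/3}(c+2^{-1/3})^2>0$ for $c\in(-\tfrac32,0)$, or simply by citing the paper's earlier statement $L_{\rm retro}^u\in(-\tfrac{1}{\sqrt[3]{2}},-\tfrac12)$. Your use of the observation that $T_{k,l}$-orbits satisfy $k>l$ on $\Sigma_c^b$ and $k<l$ on $\Sigma_c^u$ to match intervals to components is legitimate (the paper records it beforehand); alternatively, connectedness of each component plus the presence of the corresponding circular orbit pins down the interval without it.
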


      \section{Poincar\'e's coordinates}\label{sec:coord}

            



      


                  We briefly recall the definition   of Poincar\'e's coordinates.  For more details we refer the reader to \cite[Chapter 9]{AM78book} and \cite[Section 8.9]{MR3642697}.

Consider the following open subset of $ T^* ( \R^2 \setminus \{ 0 \})$
\[
            \mathcal{E} = \{ (q,p) \in T^* ( \R^2 \setminus \{ 0 \} ) : E(q,p) < 0  < e(q,p)<1, \ L(q,p)>0 \},
\]     
called the direct elliptical domain (recall that we have $L>0$ along direct orbits, see Section \ref{sec:RKP}),
 and define smooth maps $\alpha, \beta \colon \mathcal{E} \to \R / 2\pi \Z$ and $a \colon \mathcal{E} \to \R$ as follows. Choose $(q,p) \in \mathcal{E}$ and let $\gamma$ denote the Kepler ellipse on which  the point $(q,p)$ lies. Then  $\alpha = \alpha (q,p) $ is defined as the argument of the position $q,$ and $\beta = \beta(q,p) $ is defined to be   the argument of the perihelion, i.e.\ the   point, lying on the projection of $\gamma$ to the $q$-plane, that is nearest to the origin. Finally, $a = a(q,p)$ is defined to be the semi-major axis of the Kepler ellipse $\gamma.$ Note that $a = -\frac{1}{2E},$ where $E<0$ indicates the Kepler energy of $\gamma.$ See  Figure \ref{fig:elements}.  
 
    \begin{figure}[ht]
     \centering
\begin{tikzpicture} 
 \draw[-{Latex[length=2mm]}] (-2,-0.1) to (-2.1,-0.2);

 \begin{scope}[rotate=-150]
 \draw[thick] (1,0) ellipse (2cm and 1cm);
  \filldraw[draw=black, fill=black] (1,-1) circle (0.05cm);
  \draw[thick, blue]  (0,0) to (1,-1);
   \draw[thick, red] (0,0)  to (-1,0);
     \filldraw[draw=black, fill=black] (-1,0) circle (0.05cm);
     \draw [->]( 2, -2 ) [out=120 ,in=300]  to (1,-1.05);
     \node at (2,-2) [above] {\footnotesize the present position};
     \draw[->]  (-2, -1) [out=10, in=170]to (-1.05,-0.05);
     \node at (-2, -1)[right] {\footnotesize perihelion};
     \draw[]  (1,0) to (3,0);
       \filldraw[draw=black, fill=black] (1,0) circle (0.05cm);
       \draw (2.9 , 0.1) [out=140, in=-10 ] to (2.15, 0.3);
              \draw (1.85 , 0.3) [out=190, in=40 ] to (1.1, 0.1);
              \draw[->]   (0,1.5 ) [ out=300, in=140] to    (0.95,0);
              \node at (0,1.5) [right] {\footnotesize center};
\node at (2, 0.3) {\tiny $a$};
  \end{scope}
  \draw[dashed, ->] (-3,0) to (2,0);
  \draw[dashed, ->] (0,-2) to (0,2);
 \node at (0,2) [above] {\small $q_2$};
 \node at (2,0) [right] {\small$q_1$};
   \draw[thick, red] (0.4,0) [out=80, in=330] to (0.3, 0.18);
\draw[thick, blue] (0.2, 0) [out=110, in=50] to (-0.22, 0.075);
 \node[blue] at (-0.15, 0.25)   {\tiny $\alpha$};
 \node[red] at (0.35, 0.15) [right] {\tiny $\beta$};
 
 \begin{scope}[rotate=-150]
  \filldraw[draw=black, fill=black] (1,-1) circle (0.05cm);
     \filldraw[draw=black, fill=black] (-1,0) circle (0.05cm);
       \filldraw[draw=black, fill=black] (1,0) circle (0.05cm);
  \end{scope}
      \end{tikzpicture}
    \caption{Some quantities associated with a Kepler ellipse of Kepler energy $E<0$.   The quantity $a$ is the semi-major axis that is equal to $-\frac{1}{2E}.$ The quantity $\alpha$ is given by the  argument of the present position, and $\beta$ is given by  the argument of the perihelion.  }
    \label{fig:elements}
 \end{figure}
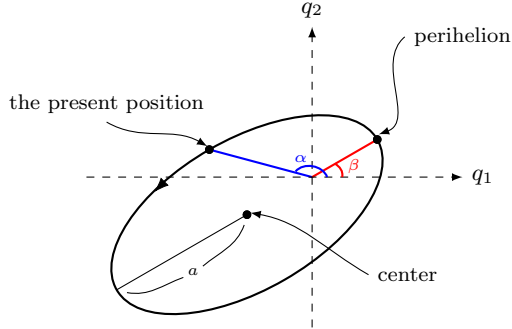  
Set  $ \mathcal{K}=  \R / 2\pi \Z \times  \R / 2\pi \Z \times S,$ where $ S = \{ (x, y) \in \R^2 : x>0, y \in (0,1) \}.$      Then there  is a   diffeomorphism
\[
\mathscr{K} \colon \mathcal{E} \to \mathcal{K} , \quad (q_1,q_2,p_1,p_2) \mapsto (\alpha, \beta, a ,e),
\]
called the Kepler map. This implies that the domain $\mathcal{E}$ is a thickened two-torus.
 
 \begin{theorem}[{Lagrange, 1808}]\label{thm:Lagrange} The Kepler map $\mathscr{K}$ satisfies
\[
\mathscr{K} _* ( dp_1 \wedge dq_1 + dp_2 \wedge dq_2) = d   \sqrt{ a (1-e^2)} \wedge  d \beta +   d \sqrt{a}  \wedge d(\alpha - \beta) .
\]

 \end{theorem}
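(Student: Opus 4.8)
The statement is the classical fact that in Poincaré-type coordinates the canonical symplectic form $\omega_0 = dp_1\wedge dq_1 + dp_2\wedge dq_2$ on the direct elliptical domain $\mathcal{E}$ pulls back to $d\sqrt{a(1-e^2)}\wedge d\beta + d\sqrt{a}\wedge d(\alpha-\beta)$. My plan is to proceed via \emph{action-angle coordinates for the two-body problem}, which is where this identity most naturally lives. First I would introduce the Delaunay variables: the angular momentum $L = \sqrt{a(1-e^2)}$ (which is the Kepler angular momentum, as in \eqref{eq:e}, where $L$ there means the same quantity) with its conjugate angle $g$ equal to the argument of the perihelion, i.e.\ $g = \beta$; and the ``Delaunay action'' $G = \sqrt{a}$ with conjugate angle $\ell$ the mean anomaly. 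The core classical computation — which I would cite rather than reproduce, referencing \cite[Chapter 9]{AM78book} or \cite[Section 8.9]{MR3642697} — is that $(\ell, g, L, G)$ are symplectic, i.e.\ $\omega_0 = dG\wedge d\ell + dL\wedge dg$. This is exactly the Liouville–Arnold theorem applied to the Kepler problem with its two commuting integrals $E$ and the angular momentum, with the actions computed as $\oint p\,dq$ over the two cycles of the Liouville torus.

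The remaining step is purely a change of angle variables. I would relate the mean anomaly $\ell$ to the true quantities appearing in the statement. The key point is that $\alpha$, the argument of the present position, decomposes as $\alpha = \beta + \nu$, where $\nu$ is the true anomaly (angle from perihelion to present position). The true anomaly and mean anomaly differ by a function of $e$ alone via Kepler's equation, but — and this is the essential subtlety — \emph{this difference is not what enters}: rather, one checks that along the Kepler flow the mean anomaly $\ell$ advances linearly in time, and the relation between $d\ell$ and $d(\alpha-\beta) = d\nu$ together with $de$ is governed by the standard orbital-mechanics identity. The cleanest route is: set $u = \alpha - \beta$ (so $u = \nu$, the true anomaly) and observe that $G = \sqrt{a}$ is conjugate to $\ell$, so I must express $dG\wedge d\ell$ in terms of $dG\wedge du$ plus correction terms. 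Writing $\ell = \ell(u, e)$ (true anomaly and eccentricity determine mean anomaly), we get $d\ell = \frac{\partial \ell}{\partial u}du + \frac{\partial \ell}{\partial e}de$, hence
\[
dG\wedge d\ell = \frac{\partial \ell}{\partial u}\, dG\wedge du + \frac{\partial \ell}{\partial e}\, dG\wedge de.
\]
One then uses that $L = \sqrt{a(1-e^2)} = G\sqrt{1-e^2}$ to rewrite $dL\wedge dg$. After substituting $dL = \sqrt{1-e^2}\,dG - \frac{Ge}{\sqrt{1-e^2}}\,de$ and collecting terms, the claim $\omega_0 = dL\wedge d\beta + dG\wedge du$ reduces to the single scalar identity relating $\partial\ell/\partial u$, $\partial\ell/\partial e$ and the factor $Ge/\sqrt{1-e^2}$ — which is precisely Kepler's equation differentiated, i.e.\ the statement that $\frac{\partial \ell}{\partial u} = \frac{(1-e^2)^{3/2}}{(1+e\cos u)^2}$ and the compatible $e$-derivative. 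I would verify this identity directly from $r = \frac{a(1-e^2)}{1+e\cos\nu}$ and $\ell = \nu - 2e\sin\nu + O(e^2)$ expanded exactly via the eccentric anomaly.

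\textbf{The main obstacle.} The genuine difficulty is bookkeeping the change from mean anomaly to true anomaly \emph{correctly}, because $\sqrt{a}$ (not $\sqrt{a(1-e^2)}$) is the action conjugate to the mean anomaly, yet the statement pairs $d\sqrt{a}$ with $d(\alpha-\beta) = d\nu$ rather than with $d\ell$. Naively one would expect $d\sqrt{a}\wedge d\ell$, and the resolution is that the cross-terms $\frac{\partial\ell}{\partial e}\,dG\wedge de$ and the $e$-dependent part of $dL$ conspire to cancel, leaving exactly $dG\wedge d\nu$. Getting this cancellation to work hinges on using the \emph{exact} Kepler equation (through the eccentric anomaly $u_{\rm ecc}$, with $\ell = u_{\rm ecc} - e\sin u_{\rm ecc}$ and $\tan(\nu/2) = \sqrt{\tfrac{1+e}{1-e}}\tan(u_{\rm ecc}/2)$) rather than series approximations, and carefully tracking that $\beta = g$ is conjugate to $L$ while $\alpha - \beta$ absorbs all the $a$-dependence. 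I would organize the verification around the eccentric anomaly as the intermediate variable, since both $\ell$ and $\nu$ have clean closed forms in terms of it, making the two partial derivatives computable in a few lines; once that scalar identity is in hand, the theorem follows by substitution.
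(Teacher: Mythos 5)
Your plan reduces the theorem to one claimed cancellation, and that cancellation is false; the gap sits exactly at the step you flagged as the main obstacle. Granting your first step (the classical Delaunay fact $\omega_0 = d\sqrt{a}\wedge d\ell + d\sqrt{a(1-e^2)}\wedge d\beta$ with $\ell$ the \emph{mean} anomaly), the statement with $\alpha-\beta=\nu$ the \emph{true} anomaly differs from it only in the term paired with $d\sqrt a$, so it is equivalent to $d\sqrt a\wedge d(\ell-\nu)=0$. Nothing can ``conspire to cancel'' here: rewriting $dL\wedge d\beta$ via $dL=\sqrt{1-e^2}\,d\sqrt a-\tfrac{e\sqrt a}{\sqrt{1-e^2}}\,de$ only produces terms containing $d\beta$, which cannot absorb the $\tfrac{\partial\ell}{\partial e}\,d\sqrt a\wedge de$ term; and $d\sqrt a\wedge d(\ell-\nu)\neq0$ because, by Kepler's equation, $\ell-\nu$ is a function of $(e,\nu)$ with $\partial(\ell-\nu)/\partial e\not\equiv0$, so this $2$-form has a nonvanishing $da\wedge de$ component. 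A cleaner way to see that the formula cannot hold with the true anomaly: if it did, then $(\beta,\alpha-\beta,\sqrt{a(1-e^2)},\sqrt a)$ would be canonical, and since the Kepler energy is $E=-\tfrac1{2a}=-\tfrac1{2K^2}$ with $K=\sqrt a$, Hamilton's equations would force $\tfrac{d}{dt}(\alpha-\beta)=K^{-3}=a^{-3/2}$ to be constant along every Kepler ellipse; but $\beta$ is constant and $\dot\alpha=\dot\nu=\sqrt{a(1-e^2)}/r^2$ is non-constant on any orbit with $e>0$. So the scalar identity your proposal hinges on (the exact compensation between $\partial\ell/\partial u$, $\partial\ell/\partial e$ and the $e$-part of $dL$) cannot be true, and the verification you postpone to ``a few lines with the eccentric anomaly'' would fail.

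The resolution is that the identity is the Lagrange--Delaunay theorem only when the angle paired with $d\sqrt a$ is the mean anomaly (equivalently, when $\alpha$ is read as the mean longitude rather than the argument of the present position); that is the version proved in the sources the paper points to, and the paper itself offers no proof of Theorem \ref{thm:Lagrange} at all --- it simply refers to \cite[Chapter 9]{AM78book} and \cite[Section 8.9]{MR3642697}. Under that reading, your opening paragraph (Liouville--Arnold/Delaunay canonicity, actions $\sqrt{a(1-e^2)}$ and $\sqrt a-\sqrt{a(1-e^2)}$) already \emph{is} the entire argument, and the whole mean-to-true-anomaly conversion should be deleted rather than repaired. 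If instead one insists on the literal reading in which $\alpha-\beta$ is the true anomaly, then no proof can exist, since the displayed formula is then false; your attempt does not expose this because it asserts, rather than checks, the crucial cancellation.
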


 Suggested by this result,  Delaunay (1860) considered a diffeomorphism
\[
\mathscr{D} \colon \mathcal{K} \to \mathcal{D}, \quad (\alpha, \beta, a, e) \mapsto ( \beta, \alpha- \beta, \sqrt{ a ( 1-e^2)}, \sqrt{ a}),
\]
 where 
 \[
 \mathcal{D} := \R / 2 \pi \Z \times \R / 2 \pi \Z \times \mathcal{O} , \quad \mathcal{O}=\{ (x,y) \in \R^2 : 0<x<y \}.
 \]
 Write 
 \[
 (l, k, L, K) =  ( \beta, \alpha- \beta, \sqrt{ a ( 1-e^2)}, \sqrt{ a})
 \]
 and define 
 \[
 \Delta = \mathscr{D} \circ \mathscr{K} \colon \mathcal{E} \to \mathcal{D}, \quad (q_1, q_2,p_1,p_2) \mapsto  (l, k, L, K) ,
 \]
 called the Delaunay map.    
 Theorem \ref{thm:Lagrange} implies that $\Delta$ is a symplectomorphism, namely, it satisfies
 \[
 \Delta_* ( dp_1 \wedge dq_1 + dp_2 \wedge dq_2 ) =  d L \wedge dl + dK \wedge dk.
 \]
However, since the argument of the perihelion is undefined for a circular orbit, the   Delaunay map cannot be used to study the dynamics in a neighbourhood of a circular orbit.

 \begin{remark}
 The classical notations of the Delaunay variables are $(g,l,G,L)$, not $(l,k,L,K).$ However, we stick to our choice since we would like to denote the angular momentum by $L$. 
 \end{remark}

 To remedy this problem, Poincar\'e introduced new coordinates as follows. 
Define
\[
\Pi \colon \mathcal{D} \to \R \times \R / 2 \pi \Z \times \R \times \R, \quad (l, k , L, K) \mapsto (\eta, \lambda, \xi, \Lambda),
\]
where
\begin{align*}
\eta &= - \sqrt{ 2(K-L)} \sin l, \\
\lambda &= l + k ,\\
\xi &= \sqrt{ 2 (K-L) } \cos l,\\
\Lambda &= K.
\end{align*}
It turned out that $\Pi$ is a symplectomorphism onto its image $\mathcal{P} = \Pi(\mathcal{D}).$  
We then define  the Poincar\'e mapping
\[
\mathscr{P} = \Pi \circ \Delta = \Pi \circ \mathscr{D} \circ \mathscr{K} \colon \mathcal{E} \to \mathcal{P},
\]
which is a   symplectomorphism as well. Namely, it satisfies 
      \[
      \mathscr{P} ^* ( d\xi \wedge d\eta + d\Lambda \wedge d\lambda) =  dp_1 \wedge dq_1 + dp_2 \wedge dq_2  .
      \]

      We shall now add the circular orbits.  Recall that for the circular orbits we have $e=0, \alpha = \beta$ and $L=K.$
      Let
       \begin{align*}
       \overline{\mathcal{E}} &= \mathcal{E} \cup  \{ (q,p) \in T^* ( \R \setminus \{ 0 \} ) : E(q,p) < 0  =  e(q,p),  \ L(q,p)>0 \} \\
       &=      \{ (q,p) \in T^* ( \R \setminus \{ 0 \} ) : E(q,p) < 0  \leq  e(q,p)<1, \ L(q,p)>0 \}
       \end{align*}
and 
\[
       \overline{\mathcal{P}} = \mathcal{P} \cup  \{   ( 0, \lambda, 0, \Lambda) : \lambda \in \R / 2 \pi \Z, \Lambda>0 \} .
       \]
              If           $e(q_1, q_2, p_1,p_2) =0,$ then we set
              \[
           {\mathscr{P}} (q_1, q_2, p_1,p_2) = (0, \lambda, 0, \Lambda)
            \]
         by defining $\lambda =\alpha$ and $\Lambda  = L=K.$ 
         This extends the Poincar\'e mapping to a symplectomorphism   $ {\mathscr{P}} \colon        \overline{\mathcal{E}}  \to      \overline{\mathcal{P}}.$ 
          We conclude that in the study of direct  orbits, i.e.\ orbits  with $L>0,$ we can work with the transformed Hamiltonian $ {\mathscr{P}}_*H \colon \overline{\mathcal{P}} \to \R.$

We now consider  
\[
           \overline{\mathcal{E}}' = \{ (q,p) \in T^* ( \R^2 \setminus \{ 0 \} ) : E(q,p) < 0  \leq  e(q,p)<1, \ L(q,p)<0 \} 
\]     
and note that $\psi ( \overline{\mathcal{E}}') =\overline{\mathcal{E}},$ where $\psi(  q_1,q_2,p_1,p_2) = (q_ 1,q_2,-p_1,-p_2).$  Define $\phi \colon \overline{\mathcal{P}} \to  \overline{\mathcal{P}}':= \phi(\overline{\mathcal{P}})$ by
\[
\phi( \eta, \lambda ,\xi, \Lambda) = (\eta, \lambda, -\xi, -\Lambda).  
\]
Then we obtain a symplectomorphism 
\[
\mathscr{P}':=\phi \circ \mathscr{P} \circ \psi \colon \overline{\mathcal{E}}' \to \overline{\mathcal{P}}' .
\]
  It follows that        in the study of  retrograde orbits, i.e.\ orbits with $L<0,$ we can work with the transformed Hamiltonian $ \mathscr{P}'_*H \colon \overline{\mathcal{P}}' \to \R.$

      \section{Below the critical energy}\label{sec:lower}

      Throughout the section we fix   $c<-\frac{3}{2} $ and  $ E_0 \in (E_{\rm direct}^u, 0). $ We shall prove the assertion of  Theorem \ref{thm:main} for the set
\begin{equation*}\label{defset1}
      \Sigma _{\rm direct}^u = \{\text{all periodic orbits of $X_H$ with   $E \in [E_{\rm direct}^u, E_0]$}\} \subset \Sigma_c^u,
     \end{equation*}    
   diffeomorphic to a solid torus with   core being the direct circular orbit $\gamma_{\rm direct}^u.$  Note that it   contains only direct orbits and  no collision orbits. Indeed, every periodic orbit on $\Sigma_{\rm direct}^u$ has $L>0.$ See Lemma \ref{lem:EL}.

               \begin{remark}\label{rmk:large}
The neighbourhood $\Sigma^u_{\rm direct}$ of $\gamma^u_{\rm direct}$ could be arbitrarily large: just take $E_0 \in (E^u_{\rm direct}, 0)$ to be sufficiently close to $0.$
               \end{remark}
       
   We shall work in Poincar\'e's coordinates. 
              For simplicity we write $(x_1, x_2, y_1, y_2) = (\eta, \lambda, \xi, \Lambda).$ 
      Note that 
\[     
 y_2 = \sqrt{a} = \frac{1}{\sqrt{-2E}}, \quad       x_1^2 +y_1^2  = 2(y_2-L). 
      \]
      Then the   Hamiltonian \eqref{eq:RKPHAM}   is given in Poincar\'e's coordinates by
      \begin{equation}\label{eq:orihamlow}
      H (x_1, x_2 , y_1, y_2) = - \frac{1}{2y_2^2} - y_2 + \frac{1}{2} ( x_1 ^2 + y_1 ^2)  
      \end{equation}
      for $(x_1, x_2, y_1, y_2) \in   \overline{\mathcal{P}} \subset \R \times \R/ 2 \pi \Z \times \R \times \R_+.$ 
          We set
          \begin{equation}\label{eq:H1H2}
  H_1(x_1, y_1) = \frac{1}{2}(x_1^2 + y_1^2), \quad H_2(  y_2) = - \frac{1}{2y_2^2}-y_2,
  \end{equation}
           so that $H = H_1 + H_2.$

           Since   $E_0 \in ( E_{\rm direct}^u, 0)$ is fixed and we consider the subset $\Sigma_{\rm direct}^u  \subset \Sigma_c^u,$ we have 
           \[
           y_2 \in [\Lambda_1, \Lambda_2]  \quad \; \; \mbox{ on} \; \;   \Sigma _{\rm direct}^u  ,
           \] where  $\Lambda_1 =  \sqrt{ -\frac{1}{2E_{\rm direct}^u}}$ and    $\Lambda_2 = \sqrt{ - \frac{1}{2E_0}}.$ 
Since $E_{\rm direct}^u >- \frac{1}{2},$  see Lemma \ref{lem:EL}, we have $\Lambda_1>1.$

\subsection{Disc foliation}    \label{sec:discfol}

  It is easy to construct a disc foliation of $\Sigma_{\rm direct}^u$ transverse to the Hamiltonian flow by looking at the negative gradient flow lines of $H_2.$
   These  are solutions to the ODE  
      \[
      \dot x_2  =   - (H_2)_{x_2}   = 0  , \quad      \dot y_2  = -  (H_2)_{y_2}   = -\frac{1}{ y_2^3}+1 >0.
      \]
        Therefore, along every negative gradient flow line  the $x_2$-component is constant and the $y_2$-component is strictly increasing. See Figure \ref{fig:ngflK2}. 
   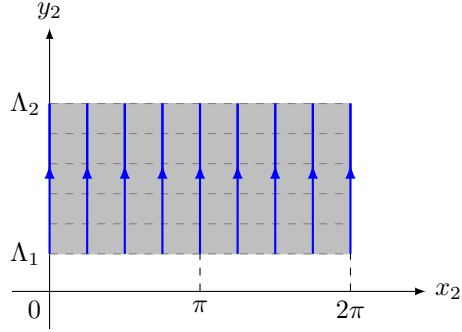
\begin{figure}[ht]
     \centering
\begin{tikzpicture}

 \draw[->] (-0.5,0) to (5,0);
 \draw[->] (0,-0.5) to (0,3.5);

 \filldraw[draw=lightgray,fill=lightgray] (0,0.5) rectangle (4,2.5);
 \draw[dashed] (2,0) to (2,2.5);
 \draw[dashed] (4,0) to (4,0.5);
 \node at ( -0.2,0) [below] {$0$};
 \node at (2,0) [below] {$ \pi$};
 \node at (4,0) [below] {$2\pi$};
 \node at (0,0.5) [left] {$\Lambda_1$};
 \node at (0,2.5) [left] {$\Lambda_2$};
 \node at (0,3.5) [above] {$y_2$};
 \node at (5,0) [right] {$x_2$};

 \draw[dashed, gray] (0,0.5) to (4, 0.5);
 \draw[dashed, gray] (0,0.9) to (4, 0.9);
 \draw[dashed, gray] (0,1.3) to (4, 1.3);
 \draw[dashed, gray] (0,1.7) to (4, 1.7);
 \draw[dashed, gray] (0,2.5) to (4, 2.5);
 \draw[dashed, gray] (0,2.1) to (4,2.1);
 
 \draw[blue, thick, -<] (0.5,2.5) to (0.5, 1.5);
 \draw[blue, thick] (0.5,2.5) to (0.5, 0.5);
 
  \draw[blue,thick, -<] (0 ,2.5) to (0 , 1.5);
 \draw[blue, thick] (0 ,2.5) to (0 , 0.5);
 
  \draw[blue, thick, -<] (1,2.5) to (1, 1.5);
 \draw[blue, thick] (1,2.5) to (1, 0.5);
 
  \draw[blue, thick, -<] (1.5,2.5) to (1.5, 1.5);
 \draw[blue, thick] (1.5,2.5) to (1.5, 0.5);
 
  \draw[blue, thick, -<] (2,2.5) to (2, 1.5);
 \draw[blue, thick] (2,2.5) to (2, 0.5);
 
  \draw[blue, thick, -<] (2.5,2.5) to (2.5, 1.5);
 \draw[blue, thick] (2.5,2.5) to (2.5, 0.5);
 
   \draw[blue,thick, -<] (3.5,2.5) to (3.5, 1.5);
 \draw[blue,thick] (3.5,2.5) to (3.5, 0.5);
 
   \draw[blue,thick, -<] (3,2.5) to (3, 1.5);
 \draw[blue,thick] (3,2.5) to (3, 0.5);
 
   \draw[blue,thick, -<] (4,2.5) to (4, 1.5);
 \draw[blue,thick] (4,2.5) to (4, 0.5);

  \end{tikzpicture}
    \caption{Negative gradient flow lines of $H_2.$ Dashed lines indicate Hamiltonian trajectories.}
    \label{fig:ngflK2}
 \end{figure}

      Pick any $x_2 \in \R / 2 \pi \Z$ and let  $h_{x_2}$ be the corresponding negative gradient flow line of $H_2.$
               Denote by $\Pi \colon \Sigma_{\rm direct}^u \to \R^2$ the projection $(x_1, x_2, y_1, y_2) \mapsto (x_2, y_2).$ Then the preimage $\Pi^{-1}(h_{x_2})$   is an embedded disc in $\Sigma_{\rm direct}^u$ that is transverse to the Hamiltonian flow. 
               The point $\Pi^{-1}(x_2, \Lambda_1)$ is the intersection of the disc $\Pi^{-1}(h_{x_2})$ and the direct circular orbit $\gamma_{\rm direct}^u.$
               By varying $x_2 \in \R / 2 \pi \Z,$ we obtain the desired disc foliation. 
               
Below we shall embed $\Sigma_{\rm direct}^u$ into a weakly convex three-sphere   and construct  a  symmetric finite energy foliation of the three-sphere. We then      obtain   a   disc foliation as the intersection of the projection of the  finite energy foliation   with $\Sigma_{\rm direct}^u.$


       \subsection{Interpolation}\label{sec:interpol}  

 Fix $\Lambda_3 \in (\Lambda_2, +\infty)$ and $0< \varepsilon_0 < \frac{1}{2} ( \Lambda_3 - \Lambda_2).$
   Consider
      \begin{equation*}\label{eq:htilde}
      F( x_2, y_2) = \frac{1}{2} ( y_2 - \Lambda_3)^2 -  \cos \frac{x_2}{2}   +B,
      \end{equation*}
      where
      $(x_2, y_2) \in \R / 4 \pi \Z \times \R_+,$ and
      the constant $B$ satisfies
      \begin{equation}\label{eq:B}
      B<   - \frac{1}{2 ( \Lambda_2+\varepsilon_0)^2} - (\Lambda_2 + \varepsilon_0)  - \frac{1}{2} ( \Lambda_2 +  \varepsilon_0 - \Lambda_3)^2 -1
      \end{equation}
and  define
   \begin{equation}\label{eq:H2tilde}
   K(x_2, y_2) = \left( 1 - f(y_2) \right) H_2(  y_2) + f(y_2) F(x_2, y_2),
   \end{equation}
where $f\colon \R  \to [0,1]$ is a non-decreasing function satisfying  $f(y_2) =0$ for $y_2 <  \Lambda_2 + \varepsilon_0$ and $ f(y_2)=1$ for $y_2 >   \Lambda_2 +2\varepsilon_0.$   
 See Figure \ref{fig3}.

Set
\begin{equation*}\label{eq:barHH}
\bar{H}(x_1, x_2, y_1, y_2) = H_1(x_1, y_1) + K(x_2, y_2) ,
\end{equation*}
where $(x_2, y_2) \in \R/4\pi \Z \times \R_+,$
and denote by $\bar \Sigma$ the connected component of $\bar{H}^{-1}(c)$ that contains a double cover $\bar{\Sigma}_{\rm direct}^u$ of $\Sigma_{\rm direct}^u.$
 Note that $\bar H$ is invariant under the anti-symplectic involution  
\begin{equation}\label{eq:rhofirst}
\rho( x_1, x_2, y_1, y_2) = ( -x_1,  4 \pi - x_2, y_1 , y_2).
\end{equation}

      We claim that for $y_2 \geq \Lambda_1,$ the smooth function $K$ has precisely two critical points $(0, \Lambda_3)$ that is a saddle  and $(2\pi, \Lambda_3)$ that is a global minimum. In fact, we show that 
\begin{equation}\label{eq:dkdy}
K_{y_2} <0 , \ \forall y_2 < \Lambda_3  \quad \text{ and }  \quad  K_{y_2}>0, \ \forall y_2 > \Lambda_3.
\end{equation}
 It suffices to show that $K$ is strictly decreasing in $y_2,$ provided  $y_2 \in [\Lambda_2+\varepsilon_0, \Lambda_2 +2\varepsilon_0].$ We assume $y_2 \in [\Lambda_2+\varepsilon_0, \Lambda_2 +2\varepsilon_0]$ and  find that
 \begin{align*}
 K_{y_2} &= f' \cdot \left( \frac{1}{2 y_2^2 } + y_2+ \frac{1}{2} ( y_2 - \Lambda_3)^2 - \cos \frac{x_2}{2} + B\right) \\
  & \quad + (1-f) \cdot \left( \frac{1}{y_2^3}-1\right)  + f  \cdot  ( y_2 - \Lambda_3   ).
 \end{align*}
 The first term in the right-hand side   is non-positive due to the choice of the constant $B.$ See \eqref{eq:B}.  
 The second term  is    non-positive since $1<\Lambda_2,$ and the last term is also non-positive   because of the choice of $\varepsilon_0.$ 
 Note that at least one of the three terms is non-zero. This implies that   $K$ admits a critical point only if $y_2 = \Lambda_3,$ provided $y_2  \geq \Lambda_1.$  The rest of the claim is straightforward.

  Given $x_2  ,$ define
  \[
  \Lambda_{\max} = \Lambda_{\max}(x_2):= \max \{ y_2 \mid (x_1, x_2, y_1, y_2) \in  \bar{\Sigma}     \}.
  \]
  Notice that  it satisfies  $K (x_2, \Lambda_{\max})=c $ and that
\[
  \Lambda_1 =  \min \{ y_2 \mid (x_1, x_2, y_1, y_2) \in  \bar{\Sigma}     \}.
\]  

We now introduce
\[
G(x_2, y_2) = \frac{1}{2} x_2^2 + \frac{1}{2}( y_2 - \Lambda_3)^2.
\]
Given $\varepsilon_1>0,$  choose any non-decreasing smooth function $g\colon \R \to [0,1]$ satisfying $g(x_2) =0$ for $x_2 < - 2\varepsilon_1$ and $g(x_2)=1$ for $x_2 > -\varepsilon_1$ and define
\[
\tilde K(x_2, y_2) = \left( 1 - g(x_2) \right) G(x_2, y_2) + g(x_2) K(x_2, y_2).
\]
We repeat a similar  business for $x_2 \geq 4 \pi$ in a symmetric way (note that the function $K$ is symmetric in $x_2$ with respect to $x_2  =2\pi$). Namely,   we interpolate between the smooth functions $\tilde K$ and  
\[
\tilde G (x_2, y_2) = \frac{1}{2} ( x _2 - 4\pi)^2 + \frac{1}{2} ( y_2 - \Lambda_3)^2  
\]
using the non-increasing function $\tilde g (x_2) := g( 4 \pi - x_2).$
This produces a smooth function $\tilde H_2 (x_2, y_2),$ which is still symmetric in $x_2$ with respect to $x_2 = 2\pi.$
Note that we have broken the periodicity in  $x_2.$   See Figure \ref{fig3}.

 \begin{figure}[t]
  \centering
  \includegraphics[width=1.0\linewidth]{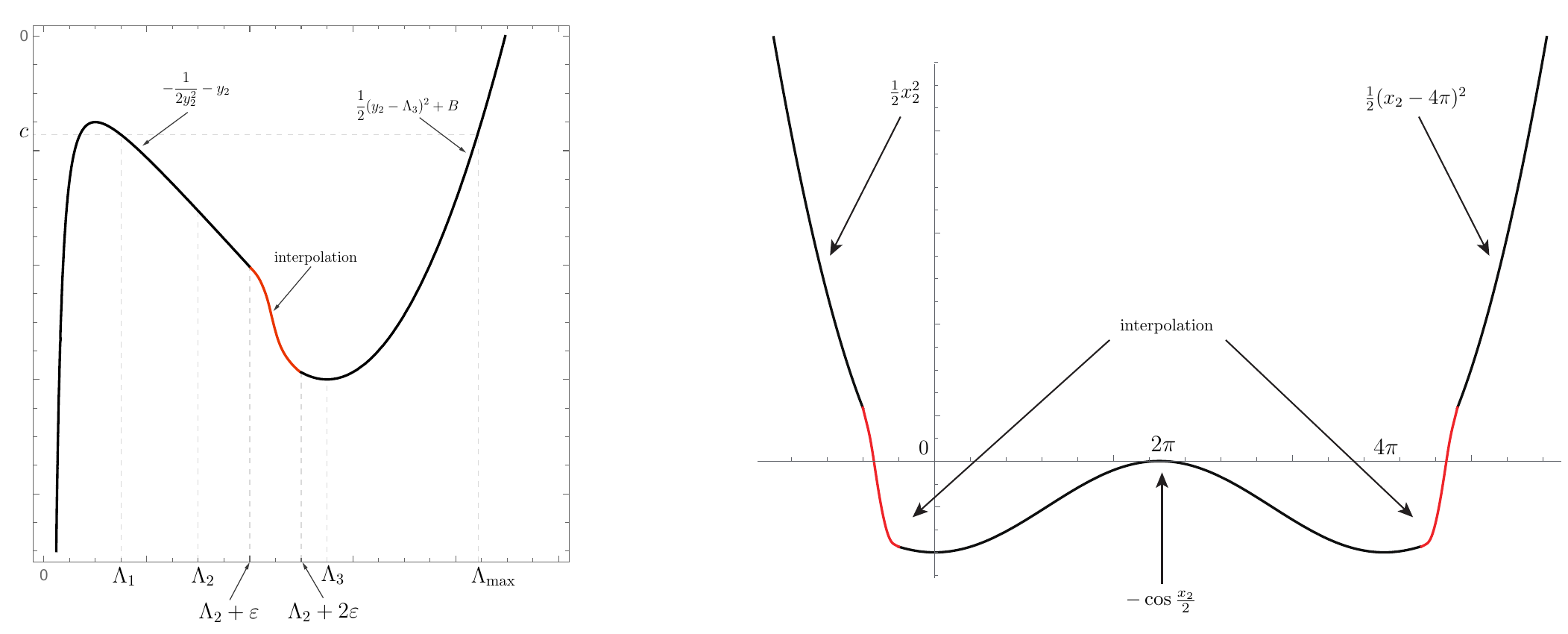}
  \caption{Interpolations for   $y_2$(left) and $x_2$(right)}
  \label{fig3}
\end{figure}

\begin{lemma}\label{lem:criticalpointsofh2tilde}
The function $\tilde H_2$ admits precisely three critical points  $(x_2, y_2) = (0,\Lambda_3), (2\pi, \Lambda_3)$ and $(4\pi, \Lambda_3).$
\end{lemma}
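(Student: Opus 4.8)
The plan is to analyze the three regions where $\tilde H_2$ is defined separately, using the interpolation structure, and show that in each region the only critical points are the claimed ones, all lying on the line $y_2 = \Lambda_3$.

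\medskip

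\noindent\textbf{Step 1: Reduce to the behaviour in the interpolation strips.} By construction $\tilde H_2$ agrees with $G$ for $x_2 < -2\varepsilon_1$, with $K$ for $-\varepsilon_1 < x_2 < 4\pi$ away from the right interpolation strip, and with $\tilde G$ for $x_2 > 4\pi + 2\varepsilon_1$. The function $G$ has a unique critical point $(0,\Lambda_3)$, a nondegenerate minimum; $\tilde G$ has a unique critical point $(4\pi,\Lambda_3)$; and by the claim already proved in the text, $K$ restricted to $y_2 \geq \Lambda_1$ has exactly the two critical points $(0,\Lambda_3)$ (a saddle) and $(2\pi,\Lambda_3)$ (a global minimum). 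Since we only care about $y_2 \geq \Lambda_1$ (the range relevant to $\bar\Sigma$), it remains to rule out critical points in the two interpolation strips $-2\varepsilon_1 \leq x_2 \leq -\varepsilon_1$ and its mirror image $4\pi+\varepsilon_1 \leq x_2 \leq 4\pi + 2\varepsilon_1$; by the $x_2 \mapsto 4\pi - x_2$ symmetry it suffices to treat the left strip.

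\medskip

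\noindent\textbf{Step 2: The $y_2$-partial derivative has a sign in the left strip.} On $-2\varepsilon_1 \leq x_2 \leq -\varepsilon_1$ we have $\tilde K = (1-g)G + gK$, hence
\[
\tilde K_{y_2} = (1-g)\, G_{y_2} + g\, K_{y_2} = (1-g)(y_2 - \Lambda_3) + g\, K_{y_2}.
\]
By \eqref{eq:dkdy}, $K_{y_2} < 0$ for $y_2 < \Lambda_3$ and $K_{y_2} > 0$ for $y_2 > \Lambda_3$, while $(y_2-\Lambda_3)$ has exactly the same sign behaviour; since $g \in [0,1]$ and not both coefficients vanish except when $y_2 = \Lambda_3$, we conclude $\tilde K_{y_2} < 0$ for $y_2 < \Lambda_3$ and $\tilde K_{y_2} > 0$ for $y_2 > \Lambda_3$ throughout the left strip (and indeed throughout the whole region, combining with Step 1). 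Hence any critical point of $\tilde H_2$ must lie on $\{y_2 = \Lambda_3\}$.

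\medskip

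\noindent\textbf{Step 3: The $x_2$-partial derivative on the line $y_2 = \Lambda_3$.} It remains to locate the zeros of $\tilde K_{x_2}$ along $y_2 = \Lambda_3$ inside the left strip. There, $G(x_2,\Lambda_3) = \tfrac12 x_2^2$ and, recalling $F$ and \eqref{eq:H2tilde}, $K(x_2,\Lambda_3) = (1-f(\Lambda_3))H_2(\Lambda_3) + f(\Lambda_3)F(x_2,\Lambda_3)$ with $f(\Lambda_3) = 1$ since $\Lambda_3 > \Lambda_2 + 2\varepsilon_0$, so $K(x_2,\Lambda_3) = -\cos\tfrac{x_2}{2} + B$ (the $\tfrac12(y_2-\Lambda_3)^2$ term vanishes). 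Thus on the line,
\[
\tilde K_{x_2} = g'(x_2)\big(K(x_2,\Lambda_3) - G(x_2,\Lambda_3)\big) + (1-g(x_2))\, x_2 + g(x_2)\cdot \tfrac12 \sin\tfrac{x_2}{2}.
\]
On the left strip $x_2 \in [-2\varepsilon_1, -\varepsilon_1]$ with $\varepsilon_1$ small: $g' \geq 0$, and one arranges (this is where one uses that $\varepsilon_1$ may be chosen small and that $B$ is very negative, so $K - G$ is large negative there, making $g'(K-G) \le 0$) that the first term is $\leq 0$; the term $(1-g)x_2 \leq 0$ since $x_2 < 0$; and $g \cdot \tfrac12\sin\tfrac{x_2}{2} \leq 0$ since $\tfrac{x_2}{2}$ is a small negative angle. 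Not all three vanish simultaneously in the strip, so $\tilde K_{x_2} < 0$ there, giving no critical point. Combining Steps 1--3 and the mirror symmetry yields exactly the three critical points $(0,\Lambda_3)$, $(2\pi,\Lambda_3)$, $(4\pi,\Lambda_3)$.

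\medskip

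\noindent\textbf{Main obstacle.} The delicate point is Step 3: checking that the "glue derivative" term $g'(x_2)\big(K - G\big)$ does not conspire with the other terms to create a spurious zero. This is handled by choosing $\varepsilon_1$ small (so the strip is narrow and close to $x_2 = 0$, where both $G$ and $K$ are small) together with the already-imposed largeness of $-B$ from \eqref{eq:B}, which forces $K(x_2,\Lambda_3) - G(x_2,\Lambda_3) = -\cos\tfrac{x_2}{2} + B - \tfrac12 x_2^2$ to be negative, so that $g'\cdot(K-G) \le 0$ has the same sign as the remaining terms. One should state explicitly the (mild) smallness hypothesis on $\varepsilon_1$ that makes this work, or note that $g$ can be chosen with $g' $ supported where the sign is favourable; everything else is a routine sign check.
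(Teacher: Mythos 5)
Your overall strategy (reduce to the gluing strips, then control the two partial derivatives separately) is the same as the paper's, and your Steps 1 and 3 essentially reproduce its computation along $\{y_2=\Lambda_3\}$. However, there is a genuine gap in Step 2. You invoke \eqref{eq:dkdy} to claim $K_{y_2}<0$ for all $y_2<\Lambda_3$, and conclude that $(\tilde H_2)_{y_2}\neq 0$ off the line $y_2=\Lambda_3$ throughout the strip. But \eqref{eq:dkdy} is established (and is true) only for $y_2\geq \Lambda_1$: for $y_2<\Lambda_2+\varepsilon_0$ one has $f(y_2)=0$, hence $K=H_2$ and $K_{y_2}=\tfrac{1}{y_2^3}-1$, which is \emph{positive} for $y_2<1$. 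So in the strip the two terms $(1-g)(y_2-\Lambda_3)<0$ and $g\,K_{y_2}>0$ can cancel, and $(\tilde H_2)_{y_2}$ may indeed vanish at points with $y_2<1$ --- the paper states this explicitly ("Note that $(\tilde H_2)_{y_2}$ may vanish for $y_2<1$"). Declaring that "we only care about $y_2\geq\Lambda_1$" does not close this: the lemma is about the critical points of the function $\tilde H_2$ and is used immediately afterwards to count \emph{all} critical points of $\tilde H$ (the input to the Morse-theoretic identification of $\Sigma\cong S^3$), so the low-$y_2$ part of the strip cannot simply be discarded.

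The paper handles exactly this case by switching to the $x_2$-derivative: for $y_2<\Lambda_1$ in the strip one has $K_{x_2}=0$ and $K-G=-\tfrac{1}{2y_2^2}-y_2-\tfrac12 x_2^2-\tfrac12(y_2-\Lambda_3)^2<0$, so $(\tilde H_2)_{x_2}=(1-g)x_2+g'(x_2)(K-G)<0$, and no critical point can occur there even though the $y_2$-derivative may vanish. Your proof needs this (or an equivalent) argument; as written, Step 2's blanket sign claim is false for $y_2<1$ and the case is missing. (A related subtlety: the restriction $y_2\geq\Lambda_1$ also underlies the reduction in Step 1, since $K$ itself, being equal to $H_2$ for small $y_2$, is critical along $y_2=1$; the paper confines its claim about $K$ to $y_2\geq\Lambda_1$ for this reason.) Two minor points on Step 3: the negativity of $K-G$ on $\{y_2=\Lambda_3\}$ follows directly from the choice of $B$ in \eqref{eq:B}, with no smallness of $\varepsilon_1$ needed beyond ensuring $\sin\tfrac{x_2}{2}\leq 0$ on the strip (e.g.\ $\varepsilon_1\leq\pi$), so the "main obstacle" you flag is not actually delicate; this part of your argument matches the paper's.
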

\begin{proof}
By the definition and symmetry of $\tilde H_2,$ it suffices to show that there are no critical points of $\tilde H_2$ in  $\{ -2 \varepsilon_1 < x_2 < - \varepsilon_1 \}. $

We assume $-2 \varepsilon_1 < x_2 < - \varepsilon_1$ and find that 
\begin{align*}
 (\tilde H_2)_{x_2}  &= \left( 1-g(x_2) \right) x_2  + \frac{ f(x_2)g(x_2)}{2} \sin \frac{x_2}{2}+ g'(x_2)(K-G),\\
(\tilde H_2)_{y_2} &= \left( 1-g(x_2)\right)(y_2 - \Lambda_3) + g(x_2) K_{y_2}.
\end{align*}
If $y_2 \geq \Lambda_1,$ then we obtain in view of  \eqref{eq:dkdy}   that
\begin{equation*}\label{eq:dh2dy}
(\tilde H_2)_{y_2} <0,\;  \forall y_2 <\Lambda_3, \quad (\tilde H_2)_{y_2} >0, \; \forall y_2 >\Lambda_3 ,
\end{equation*}
and hence $(\tilde H_2)_{y_2}$ vanishes only if $y_2=\Lambda_3.$
We then compute
\[
K(x_2, \Lambda_3) - G(x_2 , \Lambda_3) =  -\cos \frac{x_2}{2} - \frac{1}{2} x_2^2 + B <0,
\]
so that $( \tilde H_2)_{x_2} |_{ y_2 = \Lambda_3} <0.$

Suppose that $y_2 < \Lambda_1,$ so that
\begin{align*}
 (\tilde H_2)_{x_2}  &= \left( 1-g(x_2) \right) x_2  + g'(x_2)\left( - \frac{1}{2 y_2^2}-y_2 - \frac{1}{2}x_2^2 - \frac{1}{2}(y_2-\Lambda_3)^2  \right),\\
(\tilde H_2)_{y_2} &= \left( 1 - g(x_2) \right) (y_2- \Lambda_3) + g(x_2) \left( \frac{1}{y_2^3}-1 \right).
\end{align*}
Note that $(\tilde H_2)_{y_2}$ may vanish for $y_2 <1.$
For $y_2 >0,$ a global minimum of the function $\frac{1}{2 y_2^2} + y_2 + \frac{1}{2}(y_2-\Lambda_3)^2 $   is positive, and hence we have   $(\tilde H_2)_{x_2}<0,$ provided $y_2<1.$ 



We conclude that  there exist no critical points of $\tilde H_2$  in  $\{ -2 \varepsilon_1 < x_2 < - \varepsilon_1 \}, $  
from which the lemma follows. 
\end{proof}


We define the modified Hamiltonian
\[
\tilde H(x_1, x_2, y_1, y_2) = H_1(x_1, y_2) + \tilde H_2(x_2, y_2) .
\]
The anti-symplectic involution $\rho$ in \eqref{eq:rhofirst} induces an anti-symplectic involution on $\R^3 \times \R_+,$ again denoted by $\rho.$ 
Note that $\tilde H$ is $\rho$-invariant.

The previous lemma implies that   the Hamiltonian $\tilde H$ admits precisely three critical points
\begin{equation}\label{eq:criitodfhildH}
p_{c1} = (0, 0, 0, \Lambda_3), \quad p_{c2} = (0,2\pi,0,\Lambda_3), \quad p_{c3} = (0,4\pi, 0,\Lambda_3).
\end{equation}
Since $\tilde H(p_{c1}) = \tilde H( p_{c3}) = -1+B < \tilde H( p_{c2}) = 1+B<c$ and $p_{c2}$ has Morse index $1$ as a critical point of $\tilde H,$ it follows from standard Morse theory that the energy level $\tilde H^{-1}(c) $ contains a unique connected component, denoted by $\Sigma,$ which is diffeomorphic to $S^3.$
Note that $\Sigma$ is $\rho$-invariant.


For later use, we prove the following assertion.

\begin{lemma}\label{lem"cbar} Along $\Sigma$ we have $y_2>1,$ and hence\[
\tilde H_{y_2}<0, \ \forall y_2 < \Lambda_3, \quad \tilde H_{y_2}>0, \ \forall y_2 > \Lambda_3.
\]
\end{lemma}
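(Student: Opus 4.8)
The plan is to establish the bound $y_2 > 1$ on all of $\Sigma$ and then read off the sign of $\tilde H_{y_2}$ from the already-established formulas. The key point is that $\Sigma$ lies in the connected component of $\tilde H^{-1}(c)$ containing the double cover $\bar\Sigma^u_{\rm direct}$, where by construction $y_2 \in [\Lambda_1, \Lambda_{\max}]$ with $\Lambda_1 > 1$ (recall $\Lambda_1 = \sqrt{-1/(2E^u_{\rm direct})} > 1$ because $E^u_{\rm direct} > -\tfrac12$ by Lemma~\ref{lem:EL}). So I first recall that on $\Sigma$ the coordinate $y_2$ is bounded below by $\Lambda_1 > 1$: indeed, in the region $y_2 < \Lambda_1$ the Hamiltonian $\tilde H$ agrees with $H$ (since $f \equiv 0$ there, so $K = H_2$, and the value is below $c$ only off $\Sigma$), hence $\Sigma$ cannot dip below $\Lambda_1$. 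This is essentially the observation already made right after Lemma~\ref{lem:criticalpointsofh2tilde} that $\Lambda_1 = \min\{y_2 : (x_1,x_2,y_1,y_2) \in \bar\Sigma\}$, combined with $\Sigma \subset \bar\Sigma$.

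Second, with $y_2 \geq \Lambda_1 > 1$ in hand, I invoke the sign computations already carried out in the proof of Lemma~\ref{lem:criticalpointsofh2tilde}: there it was shown, piecewise across the interpolation regions, that $(\tilde H_2)_{y_2} < 0$ for $\Lambda_1 \leq y_2 < \Lambda_3$ and $(\tilde H_2)_{y_2} > 0$ for $y_2 > \Lambda_3$. Since $\tilde H = H_1(x_1, y_1) + \tilde H_2(x_2, y_2)$ and $H_1$ does not depend on $y_2$, we get $\tilde H_{y_2} = (\tilde H_2)_{y_2}$, and the claimed inequalities follow immediately on $\Sigma$.

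The only genuine content is the first step — pinning down that $\Sigma$ does not reach $y_2 \leq 1$ — and the cleanest route is to not even aim for $y_2 > 1$ directly but for the sharper $y_2 \geq \Lambda_1 > 1$: the component $\Sigma$ of $\tilde H^{-1}(c)$ under consideration is precisely the $S^3$ arising from attaching, via the interpolation, a handle onto $\bar\Sigma^u_{\rm direct}$, and the interpolation was designed (through the choice of $B$ in \eqref{eq:B} and of $\varepsilon_0$) so that all newly created level-set points have $y_2$ near $\Lambda_3 > \Lambda_2 > \Lambda_1$, while the original part retains $y_2 \in [\Lambda_1, \Lambda_2]$. Thus $y_2 \geq \Lambda_1$ throughout $\Sigma$; since $\Lambda_1 > 1$, we conclude $y_2 > 1$, and the displayed inequalities for $\tilde H_{y_2}$ follow as above.

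The main obstacle is purely bookkeeping: one must be careful that the region $\{1 < y_2 < \Lambda_1\}$, where $\tilde H_2$ can a priori have $(\tilde H_2)_{y_2}$ of either sign (it equals $(H_2)_{y_2} = y_2^{-3} - 1$ there only when $g \equiv 1$, and is a convex combination with $(y_2 - \Lambda_3)$ otherwise), is never actually met by $\Sigma$. This is exactly what the bound $y_2 \geq \Lambda_1$ rules out, so once that bound is justified there is nothing delicate left.
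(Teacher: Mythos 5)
There is a genuine gap: the heart of the lemma is precisely the part you set aside as bookkeeping, namely ruling out points of $\Sigma$ with small $y_2$ in the two interpolation strips in the $x_2$-variable, $\{-2\varepsilon_1<x_2<-\varepsilon_1\}$ and its mirror near $x_2=4\pi$, and none of your justifications covers it. First, the inclusion $\Sigma\subset\bar\Sigma$ is false: $\bar\Sigma$ is a component of $\bar H^{-1}(c)$ with $\bar H=H_1+K$ and $x_2\in\R/4\pi\Z$, whereas the two ``caps'' that close $\Sigma$ up into an $S^3$ sit exactly where $\tilde H_2=(1-g)G+gK$ differs from $K$; so the fact that $\Lambda_1=\min\{y_2:\,\cdot\in\bar\Sigma\}$ says nothing about those caps. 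Second, it is not true that $\tilde H$ agrees with $H$ wherever $y_2<\Lambda_1$: that holds only where $g\equiv1$; in a strip with $0<g(x_2)<1$ one still has $\tilde H_2(x_2,y_2)\to-\infty$ as $y_2\to0^+$, so $\tilde H^{-1}(c)$ genuinely contains points with arbitrarily small $y_2$ there, and whether they lie on the chosen component $\Sigma$ is exactly what must be proved. Third, the sentence ``the interpolation was designed so that all newly created level-set points have $y_2$ near $\Lambda_3$'' is an assertion of the conclusion, not an argument (and it is not literally accurate either: cap slices with $g(x_2)$ close to $1$ contain points with $y_2$ close to $\Lambda_1$).

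Note also that even where $g\equiv1$ (so $\tilde H_2=K=H_2$ for $y_2<\Lambda_2+\varepsilon_0$) the level set $\tilde H^{-1}(c)$ has a second branch with small $y_2$, coming from the small root of $-\tfrac{1}{2y_2^2}-y_2=c$; excluding it from $\Sigma$ requires knowing that no point of the level set has $y_2=1$ and then using connectedness of $\Sigma$. This is what the paper's proof actually supplies: fixing $x_2$ in a strip, it introduces $Q(y_2)=\tfrac12\left(1-g(x_2)\right)(y_2-\Lambda_3)^2+g(x_2)\left(-\tfrac{1}{2y_2^2}-y_2\right)$, shows that every point of $\Sigma$ with $y_2<\Lambda_2+\varepsilon_0$ satisfies $Q(y_2)<Q(1)$ (this is where $c<-\tfrac32$ enters), and combines this with the monotonicity profile of $Q$ and the connectedness/compactness of $\Sigma$ to conclude $y_2>1$. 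Your second step --- reading off the sign of $\tilde H_{y_2}=(\tilde H_2)_{y_2}$ once $y_2>1$, via \eqref{eq:dkdy} and the corresponding computation in the strips --- is fine and is what the paper calls straightforward; but as written your proposal assumes, rather than proves, the bound $y_2>1$.
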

\begin{proof} By the definition and  symmetry of $\tilde H_2$, without loss of generality we may assume that $-2 \varepsilon_1 < x_2 < - \varepsilon_1.$ 

Fix $-2 \varepsilon_1 < x_2 < - \varepsilon_1 $ and consider the smooth function 
\[
Q( y_2)= \frac{1}{2}\left( 1 - g(x_2) \right) (y_2- \Lambda_3)^2  + g(x_2) \left(  - \frac{1}{2 y_2^2}-y_2\right), \quad y_2>0.
\]
There is $a=a(x_2)\in (0,1)$ such that the function $Q$ is strictly increasing for $y_2 \in (0,a)$ and is strictly decreasing for $y_2 \in (a, \Lambda_2 + \varepsilon_0).$

Every point $(x_1, x_2, y_1, y_2) \in \Sigma$ with  $y_2 < \Lambda_2 + \varepsilon_0$ satisfies 
\begin{equation*}\label{eq:ywbound}
Q( y_2)=  c - \frac{1}{2}x_2^2 \left( 1 - g(x_2) \right)   - \frac{1}{2}(x_1^2+ y_1^2),
\end{equation*}
and hence we find
\begin{align*}
 Q(y_2)
 &\leq    c - \frac{1}{2}x_2^2 \left(1 - g(x_2)\right) \\
  &<       - \frac{3}{2}g(x_2) +\frac{1}{2} \left( 1 - g(x_2) \right) (1- \Lambda_3)^2 \\
   &=    Q( 1).
\end{align*}
This together with compactness of $\Sigma$ implies that  for any $(x_1, x_2 , y_1, y_2) \in \Sigma$ with   $x_2 \in (-2 \varepsilon_1 ,   - \varepsilon_1)$ the component $y_2$ is bigger than $1.$
  The rest of the lemma is straightforward. This finishes the proof.
\end{proof}

\subsection{Construction of a contact form}  \label{sec:form}



In this section we   construct a $\rho$-invariant Liouville vector field $Y$ that is transverse to $\Sigma .$  Then the   restriction of the one-form $\omega_0(Y, \cdot)$ to $\Sigma$ defines a contact form on $\Sigma$, denoted by $\lambda.$   The anti-symplectic involution $\rho \colon \R^4 \to \R^4$   restricts to an anti-contact involution on $\Sigma,$ still denoted by $\rho.$
 Consequently, we obtain a real contact manifold $(\Sigma, \lambda, \rho).$
Recall from Section \ref{sec:pseudoho} that $\rho$ provides   the exact anti-symplectic involution $\tilde \rho = {\rm Id}_{\R} \times \rho$ on  $( \R \times \Sigma, d(e^r \lambda)).$

We first  claim that  the radial vector field with respect to $p_{c1}$
\[
Y_0 = \frac{1}{2} \left( x_1 \p_{x_1} + x_2 \p_{x_2} + y_1\p_{y_1} + (y_2 - \Lambda_3)\p_{y_2} \right)
\]
is transverse to $\Sigma$ along $\{ x_2 < 2\pi\}.$
By the definition of $\tilde H,$ it suffices to consider the region $\{ - 2 \varepsilon_1 < x_2 < 2 \pi\}.$
We find that
\begin{equation*}\label{eq:varangle}
\begin{aligned}
d\tilde H(Y_0) & = \frac{1}{2} ( x_1^2 +y_1^2) +\frac{1}{2} \left( 1-g(x_2) \right)\left( x_2^2 + (y_2 - \Lambda_3)^2 \right) \\
& \ \ \  + \frac{1}{2}g(x_2) \left(  x_2  K_{x_2} + (y_2 - \Lambda_3) K_{y_2} \right) + \frac{1}{2} x_2 g'(x_2)(K-G). 
\end{aligned}
\end{equation*}
The    first two terms in the right-hand side   are non-negative. The third term is also non-negative because of Lemma \ref{lem"cbar}  and  \eqref{eq:dkdy}. For the last term, we find that
\begin{align*}
& \ \ \ K(x_2, y_2) - G(x_2, y_2)   \\
&= \left( 1 - f(y_2) \right) \left( - \frac{1}{2 y_2^2} - y_2 - \frac{1}{2}(y_2 - \Lambda_3)^2 \right) + f(y_2) \left( B - \cos \frac{x_2}{2} \right) -\frac{1}{2}x_2^2.
\end{align*}
By the choice of the constant $B$ (see \eqref{eq:B}), it is negative. This proves the claim.

 We now consider the Liouville vector field
 \[
 Y_1 = \frac{1}{2} ( x_1 \p_{x_1} + y_1 \p_{y_1})+ \frac{1}{4} \sin \frac{x_2}{2} \p_{x_2} + ( y_2 - \Lambda_3) \left( 1 - \frac{1}{8} \cos \frac{x_2}{2} \right) \p_{y_2}.
 \]
Using \eqref{eq:dkdy} it is straightforward to see that $d\tilde H(Y_1)>0$ along $\Sigma \cap  \{ 0 < x_2 < 4 \pi \}  .$

In order to interpolate between the two Liouville vector fields $Y_0$ and $Y_1,$ we define a smooth function 
\[
\ell(x_2, y_2) = ( y_2 - \Lambda_3) \left( \frac{1}{4} \sin \frac{x_2}{2} - \frac{1}{2}x_2 \right),
\]
so that $X_{\ell} = Y_1 - Y_0.$
Fix $\varepsilon_2>0 $ small enough and pick  any non-decreasing smooth function $h \colon \R \to [0,1]$ satisfying $h(x_2)=0$ for $x_2 < 2\pi - 2 \varepsilon_2$  and $h(x_2) = 1$ for $x_2 > 2 \pi - \varepsilon_2.$ Define
\begin{equation}\label{liouville2iless}
\tilde Y = Y_0 + X_{h\ell  },
\end{equation}
where $X_{h\ell}$ indicates the Hamiltonian vector field associated with $h(x_2)\ell(x_2, y_2).$ 

We claim that the Liouville vector field $\tilde Y$ is transverse to $\Sigma$ along $\{ x_2 < 4 \pi  \}.$ In view  of the argument above, it suffices to consider the region $\{ 2\pi - 2\varepsilon_2 < x_2 < 2 \pi - \varepsilon_2\}. $
We find that
\[
d \tilde H ( \tilde Y ) = (1-h) d  \tilde H(Y_0) + h d   \tilde H(Y_1)+\ell d  \tilde H ( X_h).
\]
The sum of the first two terms in the right-hand side above is positive. 
 For the last term, we find
 \[
 \ell d\tilde H(X_h) = - h'(x_2) \cdot \left( \frac{1}{4} \sin \frac{x_2}{2} - \frac{1}{2} x_2 \right) ( y_2 - \Lambda_3) K_{y_2}.
 \]
This is non-negative because of \eqref{eq:dkdy}. This proves the claim.

We now repeat the same business as above in the region $\{ x_2 > 2\pi\}.$ More precisely, we consider the radial vector field with respect to $p_{c2}$
\[
Y_2 = \frac{1}{2} \left( x_1 \p_{x_1} + (x_2- 4 \pi) \p_{x_2} + y_1\p_{y_1} + (y_2 - \Lambda_3)\p_{y_2} \right)
\]
and interpolate between $\tilde Y$ and $Y_2$ using the non-increasing function 
$\tilde h(x_2) := h( 4 \pi - x_2).$
This provides the   Liouville vector field $Y$ that is transverse to $\Sigma.$ 
By construction, it is invariant under $\rho.$

 \subsection{Periodic orbits}\label{subsec:perlower}
Let $\gamma(t)  = (x_1(t), x_2(t), y_1(t), y_2(t)) $ be a periodic orbit of $\tilde H.$   Denote by 
 \begin{equation*}\label{eq:periodproje}
    \gamma_1(t) = (x_1(t), y_1(t)), \quad \gamma_2(t) = (x_2(t), y_2(t))
    \end{equation*}
    the projections of $\gamma(t)$ to the $(x_1, y_1$)-plane and to the $(x_2, y_2)$-plane, respectively. 
    Then the periodic orbit $\gamma$ belongs to one of the following three categories:  
      \begin{itemize}
      \item $\gamma_1$ is constant; the green curve in Figure \ref{fig:K}.
      
      \item $\gamma_2$ is constant; the red and blue curves in Figure   \ref{fig:K}.
      
      \item  Both $\gamma_1$ and $\gamma_2$ are non-constant; the black curves in Figure \ref{fig:K}.
      
      \end{itemize}

 \begin{figure}[ht!]
  \centering
  \includegraphics[width=1.0\linewidth]{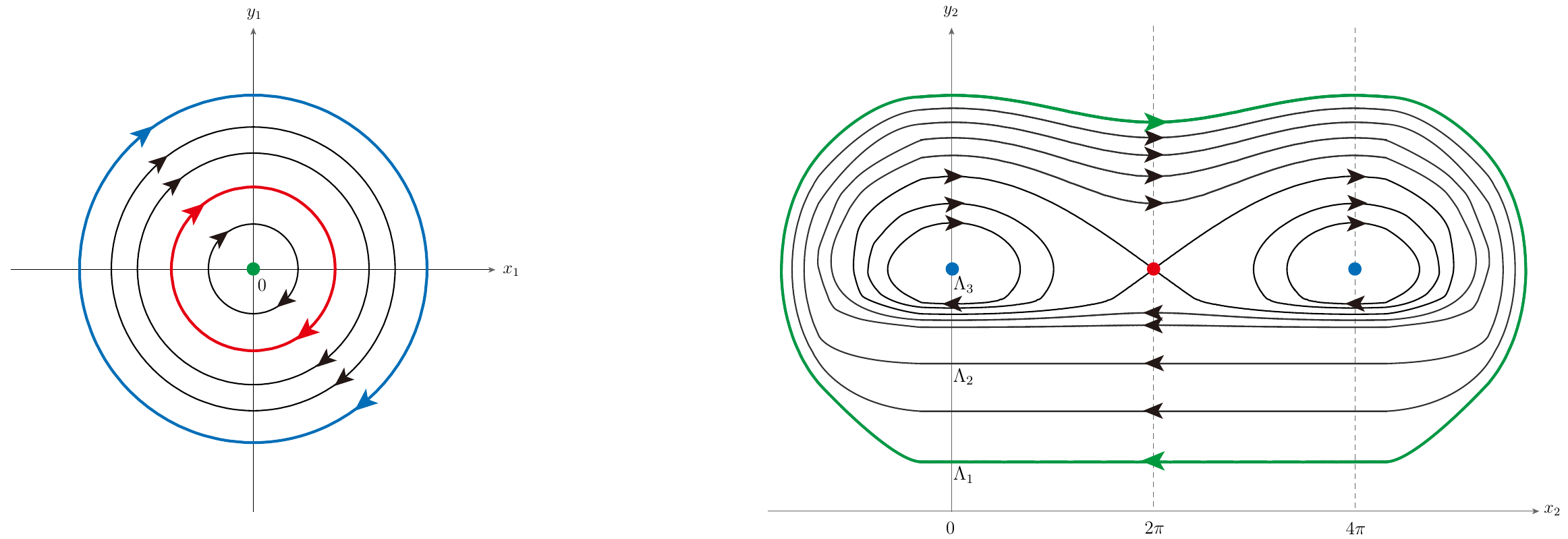}
    \caption{Phase portraits of $K_1$(left) and $K_2$(right)
       }
    \label{fig:K}
\end{figure}

Consider the first case, so that we   have $\gamma_1 \equiv (0,0),$ and hence $\tilde H_2(\gamma_2(t))=c, \forall t.$  
The corresponding periodic orbit will be denoted by $P_0.$ 


We now assume that $\gamma_2$ is constant, so that we have either $\gamma_2 \equiv (0, \Lambda_3),$  $\gamma_2 \equiv (2\pi, \Lambda_3)$ or $\gamma_2 \equiv (4\pi, \Lambda_3).$ See Lemma \ref{lem:criticalpointsofh2tilde}. The corresponding periodic orbits $\gamma$ will be denoted by $P_3 = (w_3, 2\pi), P_2=(w_2, 2\pi)$ and $P_3' = (w_3', 2\pi),$ where
\begin{align*}\label{eqL:binding}
w_3(t)  &= ( r_3 \sin t, 0, r_3 \cos t, \Lambda_3), \\
w_2(t) &= ( r_2 \sin t,2 \pi , r_2 \cos t, \Lambda_3),\\
w_3'(t)  &= ( r_3 \sin t, 4\pi, r_3 \cos t, \Lambda_3), 
\end{align*}
respectively, with
\begin{equation}\label{eq:radii}
  r_2 = \sqrt{ 2 ( c-1-B)}<    r_3 = \sqrt{ 2 ( c + 1 - B)}.
  \end{equation}
  Note that $P_2$ is a symmetric periodic orbit and that $P_3'=(P_3)_{\rho}.$  
    Since 
\[
R = \frac{1}{\lambda(X_{\tilde H})}X_{\tilde H} ,
\]
the Reeb periods of $P_2, P_3$ and $P_3'$ are given by
\[
\tau_2 = \pi r_2^2, \quad  \tau_3 = \tau_3' = \pi r_3^2,
\]
 respectively.

In the last case, i.e.\ both $\gamma_1$ and $\gamma_2$ are non-constant, the periodic orbit $\gamma$ lies in an $S^1$-family of periodic orbits. 

      \subsection{Conley-Zehnder indices}
      
      We shall prove

 \begin{proposition}\label{prop:lowwen}
The contact form $\lambda$ on $\Sigma,$ constructed in Section \ref{sec:form}, is weakly convex. More precisely,
the periodic orbits $P_2,P_3$ and $P_3'$ are non-degenerate and satisfy $\mu_{\rm CZ}(P_2)=2$ and $\mu_{\rm CZ}(P_3)= \mu_{\rm CZ}(P_3')=3.$
The other periodic orbits have the Conley-Zehnder indices at least $3.$ 
In particular, $P_2$ is a unique periodic orbit having $\mu_{\rm CZ}=2.$

 \end{proposition}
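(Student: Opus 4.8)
The strategy is to compute the Conley--Zehnder indices directly from the transverse linearised flow as set up in Section~\ref{sec:indexcz}, exploiting the product structure $\tilde H = H_1 + \tilde H_2$. First I would treat the three rigid orbits $P_2, P_3, P_3'$. Each of these sits over a critical point of $\tilde H_2$ with $\gamma_2$ constant, so the linearised flow splits into a ``$(x_1,y_1)$-block'' and a ``$(x_2,y_2)$-block''. The $(x_1,y_1)$-block is governed by $H_1 = \tfrac12(x_1^2+y_1^2)$, which is the harmonic oscillator: over a period $2\pi$ it contributes a full rotation, i.e.\ an angular contribution of $2\pi$. The $(x_2,y_2)$-block is governed by the Hessian of $\tilde H_2$ at the critical point: at $p_{c1}, p_{c3}$ (local minima of $\tilde H_2$ by Lemma~\ref{lem:criticalpointsofh2tilde}) it is positive definite and contributes a small positive rotation, whereas at $p_{c2}$ (a saddle, Morse index $1$) it is indefinite and contributes essentially no net winding. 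Summing the two blocks and tracking which open interval $(2k\pi,2(k+1)\pi)$ the shifted rotation interval $I_\varepsilon$ lands in, one reads off $\muCZ(P_3)=\muCZ(P_3')=3$ and $\muCZ(P_2)=2$. Non-degeneracy follows because in each case $\partial I$ avoids $2\pi\Z$: the harmonic-oscillator block makes the interval land strictly between consecutive even multiples of $\pi$ for the minima, and for the saddle the hyperbolic block keeps the endpoints off $2\pi\Z$. One must be slightly careful that the off-diagonal coupling terms $\kappa_{ij}$ mixing the two planes vanish along these orbits; this is where the explicit form of $A_1,A_2,A_3$ and $\Hess\tilde H$ being block-diagonal in the two pairs of variables is used.

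Next I would handle the orbit $P_0$, where $\gamma_1\equiv 0$ and $\gamma_2$ traverses the level set $\tilde H_2^{-1}(c)$ inside the ``sphere'' $\Sigma$. Here the relevant linearised dynamics in the $(x_1,y_1)$-plane is again the harmonic oscillator, contributing a full $2\pi$ per revolution of $\gamma_2$, while the $(x_2,y_2)$-motion along the regular level curve contributes a nonnegative amount (the level curve is convex/encircles the minimum region by Lemma~\ref{lem"cbar}, which forces $\tilde H_{y_2}$ to have a definite sign on either side of $y_2=\Lambda_3$). Adding these gives a rotation interval whose shift lies in an interval forcing $\muCZ(P_0)\ge 3$; I expect equality $\muCZ(P_0)=3$ but only the lower bound is needed.

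Finally, for the $S^1$-families of orbits where both $\gamma_1$ and $\gamma_2$ are non-constant, these project under $\mathscr P$ to the $T_{k,l}$-type orbits of the genuine rotating Kepler problem (on the region where the Hamiltonian has not been modified, i.e.\ where $f\equiv 0$ and $g\equiv 1$), so their indices agree with the Kepler ones. By \eqref{eq:torusorbitperiod} such an orbit winds $\ell$ times around in the $\lambda = x_2$ direction while $\gamma_1$ executes $k$ revolutions; the harmonic-oscillator contribution alone already yields a rotation of $2\pi k$ with $k \ge 2$ (since $k > l \ge 1$ forces $k\ge 2$ in the relevant regime — though for the unbounded component one has $k<l$, so one must instead invoke dynamical convexity of the Kepler Hamiltonian on this region, which is exactly what was recalled via \cite{AFvKG12, RKP13} and used in the introduction), giving $\muCZ \ge 3$. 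The cleanest route is: away from the interpolation region the contact form agrees with (a cover of) the dynamically convex Kepler contact form, so every contractible periodic orbit there has index $\ge 3$; in the interpolation region only $P_0, P_2, P_3, P_3'$ live, and those were handled above.

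\medskip
\noindent
\textbf{Main obstacle.} The delicate point is the bookkeeping for $P_2$: one must verify that the hyperbolic $(x_2,y_2)$-block genuinely contributes \emph{zero} to the winding (so that the total rotation interval is pinned by the harmonic oscillator near $2\pi$ but on the ``low'' side, giving the even index $2$ rather than $3$), and simultaneously that the endpoints of $I$ avoid $2\pi\Z$ so that $P_2$ is non-degenerate. This requires computing the eigenvalues of $\Hess\tilde H_2$ at $p_{c2}$ (a positive and a negative one, from $K(x_2,y_2)$ near its saddle) and checking that over time $2\pi$ the resulting hyperbolic path in $\Sp(2)$ stays in the component that does not cross the relevant Maslov cycle. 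A secondary subtlety is confirming the block-diagonal structure of the linearised equation \eqref{eq:ODEODE} along these orbits, i.e.\ that the cross terms $\kappa_{13},\kappa_{23}$ and the mixed entries vanish; this follows from $\nabla\tilde H$ being aligned with the $(x_1,y_1)$ or $(x_2,y_2)$ planes appropriately along each orbit, but it needs to be stated carefully.
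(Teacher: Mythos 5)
Your treatment of the non-rigid orbits is where the argument genuinely breaks down. The periodic orbits of the modified Hamiltonian $\tilde H=H_1+\tilde H_2$ with both projections non-constant are \emph{not} Kepler $T_{k,l}$-orbits living in an unmodified region: their $(x_2,y_2)$-projection runs along a closed level curve of $\tilde H_2$ at a value below $c$, and every such closed curve necessarily passes through the interpolation zones (the loops around the critical points at $y_2=\Lambda_3>\Lambda_2+2\varepsilon_0$, and the large loops that turn around inside the $x_2$-interpolation regions near $x_2=0,4\pi$) --- these are exactly the black curves in Figure 7. So the claim that ``in the interpolation region only $P_0,P_2,P_3,P_3'$ live'' is false, and the proposed reduction to a dynamically convex Kepler contact form collapses. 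Moreover, the dynamical convexity recalled in the introduction (via the cited results) concerns the regularised \emph{bounded} component $\overline{\Sigma}^b_c\cong\R P^3$; for the unbounded component, where $k<l$, there is no such statement to invoke, and in any case it would say nothing about orbits of the modified Hamiltonian. The paper closes this gap with a direct winding argument valid for all these orbits: $\dot w_1(t)$ is a nonvanishing solution of the transverse linearised flow whose winding in the $(x_1,y_1)$-plane is $\le -1$, the frame $Y_1,Y_2$ (projections of $X_1,X_2$) winds $\ge +1$, and since this frame is negatively oriented the relative winding is $\ge 2$, so the rotation interval contains $4\pi$ and $\muCZ\ge 3$; a variant of the same mechanism (harmonic-oscillator block plus frame winding) handles $P_0$. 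Your proposal contains no argument covering these orbits.

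For the rigid orbits your block-sum bookkeeping gets the right numbers but for partly wrong reasons, and the sign you assert is exactly the delicate point. The transverse linearised flow along $P_2,P_3,P_3'$ is only the two-dimensional $(x_2,y_2)$-block (the $(x_1,y_1)$-block contains the flow and gradient directions and is quotiented out); the extra $2\pi$ does not come from a harmonic-oscillator summand of the transverse flow but from the winding ($=1$) of the trivialising frame $X_1,X_2$ along the orbit, and this frame is \emph{negatively} oriented relative to $(\partial_{x_2},\partial_{y_2})$. In the coordinate frame the elliptic block at $(0,\Lambda_3)$ rotates clockwise by roughly a half-turn (angular speed $\tfrac12$ over the period $2\pi$), not by ``a small positive rotation''; only after the orientation reversal and the addition of the frame winding does the rotation interval land in $(2\pi,4\pi)$, giving $\muCZ(P_3)=3$. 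Taken at face value your signed sum would give $2\pi-\pi=\pi$ and index $1$. So the orientation analysis you defer to the ``main obstacle'' is needed already for $P_3$, not just for $P_2$; once it is carried out (as in the paper, via the ODE $\dot\alpha=A\alpha$ at the critical points together with the winding and orientation of $X_1,X_2$), the rigid indices and their non-degeneracy come out as you predict.
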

\begin{proof} We follow \cite{Salcomm}.

We first consider the periodic orbits  $P_2,P_3$ and $P_3'.$ 
Since $\mu_{\rm CZ}( P_3' ) = \mu_{\rm CZ}(P_3),$ see Section \ref{sec:indexcz},
  we may consider only $P_2$ and $P_3$
along which we have  
      \[
      X_1 = (0,  -  r_i \cos t, 0, - r_i \sin t), \quad X_2 = ( 0, - r_i \sin t, 0, r_i  \cos t), \quad i=2,3
      \]
      where $X_1$ and $X_2$ are as in \eqref{eq:X123}.
As before, we      denote by $\mathfrak{T}$ the trivialisation of  $ ( T \Sigma / \R X_K)   |_{P_i}$ induced by $X_1$ and $X_2.$  
We choose another trivialisation $\Phi$ that is  induced by the canonical basis  
      \[
      V_1 = (0,1,0,0), \quad V_2 = (0,0,0,1).
      \]
In this   trivialisation    the linearised Hamiltonian flow along $P_i$ restricted to  $ ( T \Sigma / \R X_K)   |_{P_i}$  is described by a solution to the ODE
      \[
      \dot{\alpha}_i(s) = A \alpha_i(s), \quad A = \begin{pmatrix} 0 & 1 \\  -\frac{1}{4} \cos \frac{x_2}{2}    & 0 \end{pmatrix}.
      \]
      
      If $i=2,$ then we have $x_2 = 2\pi,$ and hence  
      \[
      A \begin{pmatrix} 1 \\ 0 \end{pmatrix} = \begin{pmatrix}  0 \\ \frac{1}{4} \end{pmatrix}, \quad \quad   A \begin{pmatrix} 0 \\ 1 \end{pmatrix} = \begin{pmatrix}  1 \\ 0 \end{pmatrix}.  
      \]
      This implies that the rotation interval (see \eqref{def:rotin}) of any non-trivial solution to the ODE above contains $0$ as an interior point.
Since  the winding number of the projection  of $X_1$ (and hence also of $X_2$) to the $(x_2,y_2)$-plane are equal to $1,$ it follows that 
      the rotation interval of any non-trivial solution to ODE \eqref{eq:ODEODE} contains $ 2\pi$ as an interior point. We conclude from   the definition of the Conley-Zehnder index    that $P_2$ is non-degenerate and satisfies
      \[
      \mu_{\rm CZ}(P_2) = 2.
      \]


      Consider $P_3,$ so that $x_2 = 0.$ Then a continuous argument of any   non-vanishing solution $\alpha$ to the ODE above is of the form
      \[
      \theta (s) = -\frac{1}{2}s + \theta_0
      \]
for some constant $\theta_0 \in \R.$ Since the Hamiltonian period of $P_3$ equals $2\pi,$ the rotation interval of $\alpha$ 
is contained in $(-2\pi, 0).$
Since  the basis $\{ X_1 , X_2 \}$ has the negative orientation with respect to the   basis $\{ V_1 , V_2\} ,$ and 
the winding number of the projection  of $X_1$   to the $(x_2,y_2)$-plane equals $1,$  as  in the previous case, this implies that 
      the rotation interval of any non-trivial solution to ODE \eqref{eq:ODEODE} is contained in $(2\pi, 4\pi)$ from which we obtain   that $P_3 $ is non-degenerate and satisfies
\[
\mu_{\rm CZ}(P_3)=3.
\]



Let $P=(w,T)$ be a periodic orbit not corresponding to the critical points of $\tilde H_2.$ See   Figure \ref{fig:K}. We write $w(t) = (x_1(t), x_2(t), y_1(t), y_2(t))$ and  denote by  $w_1(t)=(x_1(t), y_1(t))$ and   $w_2(t)=(x_2(t), y_2(t))$ the projections of $w(t)$ to the $(x_1, y_1)$-plane and  $(x_2, y_2)$-plane, respectively.  Note that 
\[
\dot{w}_2(t) = ( \dot{x}_2(t), \dot{y}_2(t) ) = \left( \tilde H_{y_2}(w(t)), - \tilde H_{x_2}( w(t) ) \right).
\]
For $j=1,2,$
  $w_j(t)$ is a    closed curve  which is oriented in the clockwise direction, and hence its winding number with respect to the standard basis is given by
\[
{\rm wind}\left( t \mapsto \dot{w}_j(t) \right) \leq -1,
\]
where the equality holds if and only if $w_j(t)$ is a simple closed curve.

We abbreviate by
\[
Y_1(t) = \left( \tilde H_{y_2} (w(t)), \tilde H_{x_2}(w(t)) \right), \quad Y_2(t) = \left( \tilde H_{x_2}(w(t)), -\tilde H_{y_2}(w(t))\right),
\]
which are the projections of $X_1$ and $X_2$ along $P$ to the $(x_1, y_1)$-plane. 
Since $P$ does not correspond to the critical points of $\tilde H_2,$ these vectors are non-vanishing and linearly independent.
Hence, we may compute the rotation interval associated with the transverse linearised flow of $X_K$ along $P$ using the trivialisation of the $(x_1, y_1)$-plane induced by $Y_1$ and $Y_2.$
We find the winding numbers of $Y_1$ and $Y_2$ with respect to the standard basis  
\begin{align}\label{eq:windofY1}
     \begin{split}{\rm wind}\left(t \mapsto Y_1(t)\right) &= {\rm wind}\left(t \mapsto Y_2(t)\right) \\
&= {\rm wind}\left( t \mapsto ( \tilde H_{y_2} ( w(t)), \tilde H_{x_2}(w(t)))\right)\\
&= - {\rm wind}\left( t \mapsto ( \tilde H_{y_2} ( w(t)), -\tilde H_{x_2}(w(t)))\right)\\
&= - {\rm wind} \left( t \mapsto \dot{w}_2(t) \right) \\
&  \geq 1.
 \end{split}
     \end{align}
 
Suppose that $P$ corresponds to a black curve in Figure \ref{fig:K}, so that $\dot{w}_1(t)$ is non-vanishing.   Since $\dot{w}_1(t)$ is preserved by the linearised flow projected to the $(x_1, y_1)$-plane, there is a solution $\alpha(t)$ to ODE \eqref{eq:ODEODE} whose projection to the $(x_1, y_1)$-plane equals $\dot{w}_1(t).$ We denote such a projection by $\bar{\alpha}(t)$
 and then compute its winding number with respect to the basis $\mathfrak{B} = \{ Y_1(t), Y_2(t)\}$
 \begin{align*}
 {\rm wind}\left( t \mapsto \bar{\alpha}(t) ; \mathfrak{B} \right) & =  {\rm wind}\left( t \mapsto Y_1(t) \right) - {\rm wind}\left( t \mapsto \bar{\alpha}(t) \right) \\
 & =  {\rm wind}\left( t \mapsto Y_1(t) \right) - {\rm wind}\left( t \mapsto  \dot{w}_1(t) \right) \\
& \geq 2,
 \end{align*}
where we have used the fact that the basis $\mathfrak{B}$ has negative orientation with respect to the standard basis. This implies that the rotation interval of $P$ contains $4\pi,$ from which we conclude that $\mu_{\rm CZ}(P) \geq 3.$

We now assume that $P$ corresponds to the green curve in Figure \ref{fig:K}, so that $w_1(t) \equiv (0,0). $ In this case, the linearised Hamiltonian flow along $P$ restricted to the $(x_1, y_1)$-plane   is described by a solution to the ODE
     \begin{equation*}\label{eq:ODE1}
       \dot{\beta} = \begin{pmatrix} 0 & 1 \\ -1 & 0 \end{pmatrix} \beta .
       \end{equation*}
As in the case of $P_3,$ this implies that the rotation interval of $\beta$ is contained in $(-\infty, 0), $  and hence we find using \eqref{eq:windofY1} that        the rotation interval of any non-trivial solution to ODE \eqref{eq:ODEODE} is contained in $(2\pi,+\infty).$ Consequently, we have $\mu_{\rm CZ}(P) \geq 3.$

 This finishes the proof.
\end{proof}

        \subsection{Construction of a finite energy foliation}\label{sec:lowerconstruction}
        
       Let $\overline{X}_1 , \overline{X}_2$ be the two vector fields that span the contact structure $\xi = \ker \lambda$,  see \eqref{eq:trixi}.  
        We define the $d\lambda$-compatible almost complex structure $J \colon \xi \to \xi$ by 
      \begin{equation}\label{eq:Jcomp}
      J ( \overline X_1 ) =  \overline X_2.
      \end{equation}
    Note that it is $\rho$-anti-invariant. See \eqref{eq:Jrhoanti}.

 Abbreviate $S^1 = \R /\Z.$ Let   $\tilde u = (a,u) \colon \R \times S^1 \to  \R \times \Sigma$ be a $\tilde J$-holomorphic curve, where $\tilde J$ is given as in \eqref{eq:SFTJ}. Being $\tilde J$-holomorphic, $\tilde u$ satisfies
      \begin{equation}\label{eq:jhols}
      \begin{cases}
      \pi u_s + J(u) \pi u_t= 0,\\
      \lambda( u_t) = a_s,\\
      \lambda( u_s)= -a_t,
      \end{cases}
      \end{equation} 
      where $\pi \colon T\Sigma \to \xi $ is the projection along $R.$  
      We make the following Ansatz:
 
 \smallskip
 
 \begin{quote}
     {\bf Ansatz.} The $\Sigma$-part $u$ of $\tilde u$ takes the form
     \begin{equation}\label{eq:anstz}
      u(s,t) = ( r(s) \sin  2\pi t, x_2(s), r(s) \cos 2 \pi t, y_2(s))
      \end{equation}
    \quad \quad \quad  \quad \   with $r(s) \neq 0$ and $\dot r (s) \neq0.$
 \end{quote}

 We only consider the case  that  we have   $x_2  \equiv 0, x_2\equiv 2 \pi, x_2 \equiv 4 \pi$ or $y_2 \equiv \Lambda_3 $ with $0 \leq x_2(s) \leq 4 \pi, \forall s.$ Recall that in this case we have $\tilde H_2 = K,$ where the latter is as in \eqref{eq:H2tilde}.    
Since $u(s,t) \in \Sigma ,$ this implies that 
      \begin{equation}\label{eq:energy}
      \frac{1}{2} r(s)^2 + K( x_2(s), y_2(s) ) = c.
      \end{equation}
 
Following the argument given in \cite[Chapter 5]{dPS1} we shall show that the corresponding $\tilde u$ is a finite energy plane asymptotic to $P_2, P_3$ or $P_3'$ or  a finite energy cylinder asymptotic to $P_3$ or $P_3'$ at its positive puncture and to $P_2$ at its negative puncture.

We first assume the case where $x_2(s) \in [0, 2 \pi],\forall s,$ so that the Liouville vector field is given by $Y = Y_0 + X_{h \ell},$ see \eqref{liouville2iless}, and hence the contact form $\lambda$ equals 
\begin{align*}
\lambda &= \frac{1}{2} \left( y_1 dx_1 + (y_2 - \Lambda_3)dx_2 - x_1 dy_1 - x_2 dy_2 \right) - d(h\ell)\\
&= \frac{1}{2}  y_1 dx_1 - \frac{1}{2}x_1 dy_1 - \frac{1}{2}  (  x_2 +  h(x_2)   (\frac{1}{2}   \sin \frac{x_2}{2} -  x_2  )   ) dy_2  \\
& \ \ \ + \frac{1}{2} (y_2 - \Lambda_3) (    1 - h'(x_2) ( \frac{1}{2}\sin\frac{x_2}{2} - x_2  ) - h(x_2)  ( \frac{1}{4}\cos\frac{x_2}{2} -1 )           ) dx_2. 
 \end{align*}

 We compute
 \begin{align*}
 \nonumber  \lambda(u_t)&=      \pi r^2,\\
\label{eqlambdaus} \lambda(u_s)&=  - \frac{1}{2}  (  x_2 +  h(x_2)   (\frac{1}{2}   \sin \frac{x_2}{2} -  x_2 ))\dot y_2  \\
& \ \ \ + \frac{1}{2} (y_2 - \Lambda_3)(    1 - h'(x_2) ( \frac{1}{2}\sin\frac{x_2}{2} - x_2  ) - h(x_2)  ( \frac{1}{4}\cos\frac{x_2}{2} -1 )           ) \dot x_2.
 \end{align*}
Since $x_2 \equiv 0,$ $x_2 \equiv 2 \pi $ or $y_2 \equiv \Lambda_3,$ we obtain 
\[
\lambda(u_s)=0.
\]
This together with  the last identity of \eqref{eq:jhols} implies that $a$ is independent of $t$, so 
 we may  choose 
            \begin{equation}\label{eq:ast}
            a(s,t) = a(s) = \int_0^s \pi r( u)^2 d u 
            \end{equation}
        from which we see that $a \to \pm \infty$ as $ s \to \pm \infty.$

   The first identity  of \eqref{eq:jhols} implies that $\pi u_s$ and $\pi u_t$ are linearly independent, so that there exist $A, B, C$ and   $D$ such that
 \[  
 \overline X_1 = A \pi u_s + B \pi u_t  \quad \text{ and } \quad   \overline X_2 = C \pi u_s + D \pi u_t.
 \]
 By definition of $J,$ we have $A= D$ and $B=-C,$  and hence  
\begin{equation}\label{eq:AB}
   \overline X_1 = A \pi u_s + B \pi u_t  \quad \text{ and } \quad  
   \overline X_2  = -B \pi u_s + A \pi u_t .
  \end{equation}

Let  $P \colon \R^4 \to \R^2$ be the projection $(x_1, x_2, y_1, y_2) \mapsto (x_2, y_2)$ and denote 
\[
P( \overline X_1)= \begin{pmatrix} \square_1 \\ \square_2 \end{pmatrix}, \quad P(\overline X_2) = \begin{pmatrix} \triangle_1 \\ \triangle_2 \end{pmatrix}, \quad P( \pi u_s) =\begin{pmatrix} a_1 \\ a_2 \end{pmatrix}, \quad P(\pi u_t) = \begin{pmatrix} b_1 \\ b_2 \end{pmatrix}.
\]
A direct computation using \eqref{eq:energy} shows that 
\[
a_1 b_2 - a_2 b_1 =   - \frac{ \pi r^3\dot r }{k}  \neq0,
\]
where
\begin{align*}
      k  &= \lambda(X_{\tilde H})\\  &=  \frac{r^2}{2}   + \frac{1}{2} (y_2 - \Lambda_3) (    1 - h'(x_2) ( \frac{1}{2}\sin\frac{x_2}{2} - x_2  ) - h(x_2)  ( \frac{1}{4}\cos\frac{x_2}{2} -1 )           ) K_{y_2} \\ 
& \ \ \  + \frac{1}{2}  (  x_2 +  h(x_2)   (\frac{1}{2}   \sin \frac{x_2}{2} -  x_2  )   ) K_{x_2}.
\end{align*}
By means of \eqref{eq:AB}   we obtain  
   \[  
      \frac{1}{ a_1 b_2 - a_2 b_1} \begin{pmatrix} b_2 & - b_1 \\ - a_2 & a_1 \end{pmatrix} \begin{pmatrix} \square_1 \\ \square_2 \end{pmatrix} = 
   \begin{pmatrix} A \\ B \end{pmatrix} = \frac{1}{ a_1 b_2 - a_2 b_1} \begin{pmatrix} -a_2 & a_1 \\ - b_2 & b_1 \end{pmatrix} \begin{pmatrix} \triangle_1 \\ \triangle_2 \end{pmatrix},
\]  
and hence
 \begin{align}\label{1st} 
     \begin{split}
    b_2 \square_1 - b_1 \square_2 &= -a_2 \triangle_1 + a_1 \triangle_2,\\
  -a_2 \square_1 + a_1 \square_2 &= -b_2 \triangle_1 + b_1 \triangle_2.
     \end{split}
     \end{align}
    We compute that  
         \[
     \triangle _1 \square_2 - \triangle_2 \square_1 = \frac{ r^2 ( r^2 +  K _{x_2}   ^2 + K_{y_2}  ^2)}{2k } > 0.
     \]
Then  solving \eqref{1st}  provides
     \begin{align}\label{eq:finalsol}
     \begin{split}
     \begin{pmatrix} \dot x_2  \\ \dot y_2  \end{pmatrix} &= \frac{1}{ \triangle _1 \square_2 - \triangle_2 \square_1 } \begin{pmatrix} - \square_1 & \triangle_1 \\  - \square_2 & \triangle_2 \end{pmatrix} \begin{pmatrix} b_2 \square_1 - b_1 \square_2 \\ -b_2 \triangle_1 + b_1\triangle_2 \end{pmatrix} \\
     &= \frac{1}{ \triangle _1 \square_2 - \triangle_2 \square_1 } \begin{pmatrix}  -b_2 ( \triangle_1^2 + \square_1^2) + b_1 ( \triangle_1 \triangle_2 + \square_1 \square_2) \\ b_1 ( \triangle_2^2 + \square_2^2) -b_2 ( \triangle_1 \triangle_2 + \square_1 \square_2) \end{pmatrix}.
     \end{split}
     \end{align}

      Suppose first that $x_2 \equiv 0.$  Using \eqref{eq:energy} and \eqref{eq:finalsol} we find   
\begin{equation}\label{y21}
\dot y_2   (s)  = \frac{     - 2 \pi r(s)^2    K_{y_2} (0, y_2(s))  }{ (   K_{y_2} (0, y_2(s)) )^2 + r(s)^2  } =  \frac{     - 4 \pi (c - K (0, y_2(s)) )    K_{y_2} (0, y_2(s)) }{ (  K_{y_2} (0, y_2(s)) )^2 + 2 (c - K (0, y_2(s)) )  }   ,
\end{equation} 
which may be seen as a differential equation of the type 
\begin{equation}\label{eq:ODEtype}
\dot y_2 (s) = P( y_2(s)).
\end{equation}
Here, $P=P(y_2)$  is a smooth function defined on the interval $[y_2^-, y_2^+],$ where  $(y_2^-, y_2^+ ) = (\Lambda_1, \Lambda_3)$ or $(y_2^-, y_2^+ ) = (\Lambda_3, \Lambda_{\max}).$ 
It follows  from   \eqref{eq:dkdy} that the function $P$ satisfies the following properties:
      \begin{align}\label{eq:properP}
     \begin{split}
 \begin{cases}
P(y_2) = 0 & \quad  y_2 = \Lambda_1, \Lambda_3, \Lambda_{\max} ,\\
 P(y_2)>0  &  \quad  y_2 \in (\Lambda_1, \Lambda_3),\\
  P(y_2)<0 & \quad   y_2 \in (\Lambda_3, \Lambda_{\max}). 
 \end{cases}
      \end{split}
     \end{align}
Thus, any solution $y_2=y_2(s)$ to ODE  \eqref{y21} with initial condition $y_2(0) \in (\Lambda_1, \Lambda_3)$ is strictly increasing and satisfies
\[
\lim_{s \to -\infty} y_2(s) =\Lambda_1 \quad \text{ and } \quad \lim_{s \to +\infty} y_2(s) = \Lambda_3,
\]
and any solution with      $y_2(0) \in (\Lambda_3, \Lambda_{\max})$ is strictly decreasing and satisfies
\[
\lim_{s \to -\infty} y_2(s) =\Lambda_{\max} \quad \text{ and } \quad \lim_{s \to +\infty} y_2(s) = \Lambda_3.
\]
By construction such a solution   yields a solution
\begin{equation}\label{eq:x20case}
u(s,t) = ( r(s) \sin 2 \pi t, 0, r(s) \cos 2 \pi t, y_2(s)), \; (s,t) \in \R \times S^1
\end{equation}
  to the first equation in \eqref{eq:jhols}. Here, the radius $r(s)$ is determined by  relation \eqref{eq:energy} with $x_2(s) = 0$ and satisfies 
\[
\lim_{s \to -\infty} r(s) = 0, \quad \lim_{s \to +\infty} r (s) = r_3 ,
\]
where $r_3 $ is as in \eqref{eq:radii}.
  This together with $a(s,t),$ defined in \eqref{eq:ast}, produces a $\tilde J$-holomorphic curve $\tilde u = (a, u ) \colon \R \times S^1 \to \R \times \Sigma.$  Note that this solution depends on the choice of the initial condition $y_2 (0) \in (\Lambda_1, \Lambda_3) \cup (\Lambda_3 , \Lambda_{\max}).$ 
 For any $N>0$ and for any $\phi \colon \R \to [0,1]$ with $\phi' \geq0,$ we have
\[
\int_{[-N,N] \times S^1} d \tilde{u} ^*   ( \phi \lambda) = \phi( a (N)  ) \pi r (N)^2 - \phi( a (-N)) \pi r (-N)^2,
\]
and hence 
\[
E(\tilde u ) =\tau_3= \pi r_3^2.
\]
  Moreover, the  mass of $\tilde u $ at $-\infty$ is given by
\[
m(-\infty)= \lim_{N \to -\infty}  \int_{\{ N \} \times S^1} u ^* \lambda = \lim_{ N \to -\infty} \pi r (N)^2 = 0 ,  
\]
implying that   $-\infty$ is a removable puncture of $\tilde u$. See \cite[Section 1]{HWZII}.
After removing it, we obtain an embedded finite energy $\tilde J$-holomorphic plane $\tilde u = (a, u) \colon \C \to \R \times \Sigma $ asymptotic to $P_3$ at its  positive puncture $s=+\infty.$

 In the case $x_2 \equiv 2 \pi, $  we obtain  
 \begin{equation}\label{y22} 
\dot y_2   (s)  =  \frac{     - 4 \pi (c - K (2 \pi, y_2(s)) )    K_{y_2}(2 \pi, y_2(s))}{  (   K_{y_2}(2 \pi, y_2(s)) )^2 + 2 (c - K (2 \pi, y_2(s)) )  }   ,
\end{equation}
which may be seen as a differential equation satisfying the same properties as above. See \eqref{eq:ODEtype} and  \eqref{eq:properP}. 
Note that the radius $r(s)$,  determined by   \eqref{eq:energy} with $x_2(s) = 2\pi$,   satisfies 
\[
\lim_{s \to -\infty} r(s) = 0, \quad \lim_{s \to +\infty} r (s) = r_2 ,
\]
where $r_2 $ is as in \eqref{eq:radii}.
Then arguing in a similar manner  we find an embedded finite energy $\tilde J$-holomorphic plane $\tilde u$, depending on the    initial condition $y_2 (0) \in (\Lambda_1, \Lambda_3) \cup (\Lambda_3 , \Lambda_{\max}),$   asymptotic to $P_2$ at its  positive puncture $s=+\infty$ and having energy equal to $\tau_2=\pi r_2^2.$
       Note that it is invariant, i.e.\   $\tilde u = \tilde u_{\rho}$. See Section \ref{sec:pseudoho}.

  Finally, assume that    $  y_2 \equiv \Lambda_3$ with $0 \leq x_2(s) \leq 2\pi, \forall s,$   so that 
\begin{equation}\label{x21}
\dot x_2   (s) 
= \frac{ - 4 \pi ( c - K  ( x_2(s), \Lambda_3) )K_{x_2}(x_2(s), \Lambda_3)}{ (K_{x_2}(x_2(s), \Lambda_3))^2 +2(c - K (x_2(s), \Lambda_3))}, 
\end{equation}
which may be seen as a differential equation of the type
\[
\dot x_2 (s) = Q(x_2(s)).
\]
Here, $Q = Q(x_2)$ is a smooth function defined on $[0, 2\pi ]. $   Notice that
\[
\begin{cases}
Q(x_2) =0 & \quad x_2 = 0, 2\pi,  \\
Q(x_2)<0 & \quad   0 <x_2 < 2\pi , 
\end{cases}
\]
from which we see that any solution $x_2 = x_2(s)$ to ODE \eqref{x21} with initial condition $x_2(0) \in (0, 2\pi)$ is strictly decreasing and satisfies
\[
\lim_{s \to -\infty} x_2(s) = 2\pi \quad \text{ and } \quad \lim_{s \to +\infty} x_2(s) = 0.
\]
In view of the construction such a  solution  gives rise to a solution
\begin{equation}\label{eq:y2lambda3case}
u(s,t) = ( r(s) \sin 2 \pi t, x_2(s), r(s) \cos 2 \pi t, \Lambda_3), \ (s,t) \in \R \times S^1
\end{equation}
to the first equation in \eqref{eq:jhols}. The radius $r(s)$ is determined by   \eqref{eq:energy} with $y_2(s) = \Lambda_3$ and satisfies 
\[
\lim_{s \to -\infty} r(s) = r_2 \quad \text{ and } \quad \lim_{s \to +\infty} r (s) = r_3 ,
\]
where $r_2 $ and $r_3$ are  as in \eqref{eq:radii}.
  This together with $a(s,t),$ defined in \eqref{eq:ast}, produces a $\tilde J$-holomorphic curve $\tilde u = (a, u ) \colon \R \times S^1 \to \R \times \Sigma,$ depending on   the choice of the initial condition $x_2 (0) \in (0, 2 \pi). $  
The Hofer energy is given by $\tau_3$ and the mass of $\tilde u$ at $-\infty$ is equal to $  \tau_2=\pi r_2^2 ,$  implying that  $-\infty$ is non-removable. 
     Thus,   we obtain an embedded finite energy $\tilde J$-holomorphic cylinder $\tilde v = (b, v) \colon \R \times S^1 \to \R \times \Sigma $ asymptotic to $P_3$ at its positive puncture $s=+\infty$ and to $P_2$ at its  negative puncture $s=-\infty.$

We   now   consider the case   $x_2 \equiv 4\pi  $ or $y_2 \equiv \Lambda_3$ with $2\pi \leq x_2(s) \leq 4\pi, \forall s.$ Instead of the direct construction as above, we shall use the symmetry induced by the anti-contact involution $\rho.$


Let $\tilde u = (a,u) \colon \C \to \R \times \Sigma,$ where $u(s,t)$ is as in \eqref{eq:x20case},  
be a finite energy $\tilde J$-holomorphic plane,  corresponding to $x_2 \equiv 0.$  It is asymptotic to $P_3$ at its positive puncture.
The discussion above shows that $ \tilde u_{\rho}(s,t)  =  \left( a_{\rho}(s,t) ,   u_{\rho}(s, t) )\right),$ 
where 
\begin{align*}
a_{\rho}(s,t)  = a(s,-t) \quad \text{ and }  \quad u_{\rho}(s,t)  = (  r(s) \sin 2 \pi t, 4\pi, r(s) \cos 2 \pi t, y_2(s)   ),
\end{align*}
is a finite energy $\tilde J$-holomorphic plane asymptotic to $P_3' =  (P_3)_{\rho}.$
As above,     if  $y_2(0) \in (\Lambda_1, \Lambda_3),$ then $y_2(s)$  is strictly increasing and satisfies
\[
\lim_{s \to -\infty} y_2(s) =\Lambda_1 \quad \text{ and } \quad \lim_{s \to +\infty} y_2(s) = \Lambda_3,
\]
and if        $y_2(0) \in (\Lambda_3, \Lambda_{\max}),$ then $y_2(s)$ is strictly decreasing and satisfies
\[
\lim_{s \to -\infty} y_2(s) =\Lambda_{\max} \quad \text{ and } \quad \lim_{s \to +\infty} y_2(s) = \Lambda_3.
\]

In a similar manner,  if $\tilde u = (a,u) \colon \R \times S^1 \to \R \times \Sigma,$ where   $u(s,t)$ is as in \eqref{eq:y2lambda3case}   and $0\leq x_2(s) \leq 2 \pi, \forall s,$ 
is a finite energy $\tilde J$-holomorphic cylinder,  then  $ \tilde u_{\rho}(s,t) =  \left( a_{\rho}(s,t) ,   u_{\rho}(s, t) )\right),$ where $a_{\rho}$ is as above  and 
\[
u_{\rho}(s,t)  = (  r(s) \sin 2 \pi t, 4\pi-  x_2(s) , r(s) \cos 2 \pi t, \Lambda_3  ),
\]
is a finite energy $\tilde J$-holomorphic cylinder, asymptotic to $P_3'  =  (P_3)_{\rho}$ at  its positive puncture and to $P_2 = (P_2)_\rho$ at its negative puncture. Moreover, if  $    4 \pi - x_2(s)$ has the initial condition    $4\pi - x_2(0) \in ( 2\pi, 4 \pi),$ then it is strictly increasing and satisfies
\[
\lim_{s \to -\infty}    4 \pi - x_2(s) = 2\pi \quad \text{ and } \quad \lim_{s \to +\infty}    4 \pi - x_2(s) = 4\pi.
\]

We have constructed the following embedded finite energy $\tilde J$-holomorphic curves:
\begin{itemize}

\item  finite energy planes $\tilde u_{3,1}$ and $\tilde u_{3,2},$ which corresponds to $x_2 \equiv 0$ and initial conditions $y_2(0) \in (\Lambda_1, \Lambda_3)$ and $y_2(0) \in (\Lambda_3, \Lambda_{\max}),$ respectively.
They are both asymptotic to $P_3.$

\item  finite energy planes $\tilde u_{2,1}$ and $\tilde u_{2,2},$ corresponding to $x_2 \equiv 2 \pi$ and initial conditions $y_2(0) \in (\Lambda_1, \Lambda_3)$ and  $y_2(0) \in (\Lambda_3, \Lambda_{\max}),$ respectively. 
They are invariant and     asymptotic to $P_2.$

\item  finite energy planes $\tilde u_{3,1}' = (\tilde u_{3,1})_\rho$ and $\tilde u_{3,2}'=(\tilde u_{3,2})_\rho,$ which corresponds to $x_2 \equiv  4\pi $ and initial conditions $y_2(0) \in (\Lambda_1, \Lambda_3)$ and $y_2(0) \in (\Lambda_3, \Lambda_{\max}),$ respectively. 
They are both asymptotic to $P_3'= (P_3)_\rho.$

\item  finite energy cylinders $\tilde v_1$ and $ \tilde v_2 = (\tilde v_1)_\rho,$ corresponding to $y_2 \equiv \Lambda_3 $ and initial conditions $x_2(0) \in (0, 2\pi)$ and $x_2(0) \in (2 \pi, 4 \pi),$ respectively.  
The cylinder $\tilde v_1$ is    asymptotic to $P_3$ at  $+\infty$ and to $P_2$ at   $-\infty,$ and $\tilde v_2$ is   asymptotic to $P_3'=(P_3)_\rho$ at  $+\infty$ and to $P_2=(P_2)_\rho$ at   $-\infty.$

\end{itemize}

       The following statement is  taken from \cite[Theorem 1.5]{HWZIII} and \cite[Theorem 4.5.44]{Wendl1}. Note that $J$ is not required to be generic.        
       \begin{theorem}[{Hofer-Wysocki-Zehnder, Wendl}]\label{HWZThm}
      Let a closed three-manifold $M$ be equipped with a contact form $\lambda$ and a $d\lambda$-compatible almost complex structure $J.$   Assume that $\tilde u = (a, u) \colon \C \to \R \times M$ is an embedded finite energy $\tilde J$-holomorphic plane, asymptotic to a non-degenerate periodic orbit $P=(x,T) $ having $\mu_{\rm CZ}(P)=3.$ Then  there exists an $\delta>0$ and an embedding
      \[
      \tilde \Phi  \colon \R \times (-\delta, \delta) \times \C \to \R \times M, \ (\sigma, \tau, z) \mapsto ( a_\tau (z) + \sigma, u_\tau (z)),
      \]
      having the following properties:
      \begin{enumerate}
      \item For each $\sigma \in \R$ and $\tau \in (-\delta, \delta)$ the map $\tilde u_{\sigma, \tau}= \tilde \Phi( \sigma, \tau, \cdot)$ is up to parametrisation an embedded finite energy $\tilde J$-holomorphic plane and $\tilde u_{0,0} = \tilde u.$
      
      \item  The map $u_{\cdot}(\cdot) \colon (-\delta, \delta) \times \C \to M$ is an embedding and its image is disjoint from the asymptotic limit $P.$ In particular, the maps $u_{\tau} \colon \C \to M$ are embeddings for all $ \tau \in (-\delta, \delta),$
      with mutually disjoint images that do not intersect their asymptotic limit $P.$

      \item For any sequence $\tilde u_k = (a_k, u_k)$ of finite energy $\tilde J$-holomorphic planes such that they are all asymptotic to $P$ and satisfy $\tilde u_k \to \tilde u$ in $C^{\infty}_{\rm loc}(\C)$  for all $k$ sufficiently large, there exists   a sequence $(\sigma_k, \tau_k) \in \R \times (-\delta, \delta)$ converging to $(0,0)$ and a sequence $\varphi_k \colon \C \to \C$ of biholomorphic reparametrisations of $\C$ of the form $z \mapsto bz+c$ with constants $b, c \in \C,$ such that  $\tilde u_k = \tilde u_{\sigma_k, \tau_k} \circ \varphi_k.$ 
            \end{enumerate}
       \end{theorem}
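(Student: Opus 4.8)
The statement is in essence a repackaging of \cite[Theorem 1.5]{HWZIII} together with the automatic transversality results of \cite{Wendl1}, so I will only indicate the strategy. The first step is to pin down the Fredholm theory of $\tilde u$. Since $\tilde u$ is a finite energy plane --- a genus zero curve with a single (positive) puncture --- asymptotic to the non-degenerate orbit $P$ with $\mu_{\rm CZ}(P)=3$, the index formula for punctured holomorphic curves in a four-dimensional symplectisation gives $\ind(\tilde u)=\mu_{\rm CZ}(P)-1=2$, and the extremality of $\mu_{\rm CZ}(P)$ forces the winding number of the governing asymptotic eigenfunction to be extremal, so that $\tilde u$ is a \emph{fast} plane. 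The analysis of \cite{HWZIII} --- or, more generally, Wendl's automatic transversality criterion for somewhere injective curves in four-dimensional symplectisations, which applies here because the asymptotic orbit has odd Conley--Zehnder index --- shows that the linearised Cauchy--Riemann operator at $\tilde u$ is surjective. Hence the space $\mathcal{M}$ of unparametrised embedded finite energy $\tilde J$-holomorphic planes asymptotic to $P$ is a smooth $2$-manifold near $[\tilde u]$.

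The second step is to separate off the $\R$-translation action. Since $\tilde u$ is a plane and not a cylinder over a periodic orbit, the $\R$-action $(\sigma,(a,u))\mapsto(a+\sigma,u)$ is free and proper on $\mathcal{M}$ near $[\tilde u]$, so a neighbourhood of $[\tilde u]$ is diffeomorphic to $\R\times(-\delta,\delta)$: the $\R$-factor records the translation parameter $\sigma$, and $(-\delta,\delta)$ parametrises a genuine one-parameter deformation $\tau\mapsto[\tilde u_\tau]$ with $\tilde u_0=\tilde u$. Choosing smooth representatives $\tilde u_\tau=(a_\tau,u_\tau)$ of this family, the formula $\tilde\Phi(\sigma,\tau,z)=(a_\tau(z)+\sigma,u_\tau(z))$ defines a smooth map, and for each fixed $(\sigma,\tau)$ the curve $\tilde u_{\sigma,\tau}=\tilde\Phi(\sigma,\tau,\cdot)$ is, up to parametrisation, an embedded finite energy plane --- this is just embeddedness of the individual members of the family.

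The genuine work --- and the step I expect to be the main obstacle --- is property (2): that the leaves $u_\tau(\C)$ are pairwise disjoint and each disjoint from the asymptotic orbit $P$. For this I would invoke Siefring's intersection theory for asymptotically cylindrical holomorphic curves. One first checks that the generalised self-intersection number $\tilde u\ast\tilde u$ vanishes: embeddedness kills the interior count, while the extremal asymptotic winding (equivalently $\mu_{\rm CZ}(P)=3$) makes the asymptotic contribution vanish. Homotopy invariance of the intersection number then yields $\tilde u_{\sigma,\tau}\ast\tilde u_{\sigma',\tau'}=0$ for all nearby parameters, and positivity of intersections in dimension four forces the images either to coincide --- hence to differ by a translation and a reparametrisation --- or to be genuinely disjoint; keeping track of the parameters shows that distinct values of $\tau$ give disjoint leaves. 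A parallel argument, using that a fast plane winds exactly once around its asymptotic orbit, shows that each leaf is disjoint from $P$.

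Finally, property (3) is a soft consequence of the smooth structure on $\mathcal{M}$ together with SFT compactness. Given $\tilde u_k\to\tilde u$ in $C^\infty_{\rm loc}(\C)$ with each $\tilde u_k$ asymptotic to $P$, the uniform energy bound rules out breaking and bubbling, so the $\tilde u_k$ converge to $\tilde u$ in the SFT sense without loss of components; hence for $k$ large they represent points of $\mathcal{M}$ in the chart above, i.e.\ $[\tilde u_k]=[\tilde u_{\sigma_k,\tau_k}]$ with $(\sigma_k,\tau_k)\to(0,0)$. Since every biholomorphism of $\C$ fixing the puncture at $\infty$ has the affine form $z\mapsto bz+c$, unwinding this identification of unparametrised curves produces reparametrisations $\varphi_k(z)=b_kz+c_k$ with $\tilde u_k=\tilde u_{\sigma_k,\tau_k}\circ\varphi_k$, which is the claim.
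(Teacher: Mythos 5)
This theorem is not proved in the paper at all: it is imported verbatim, with the text simply citing Hofer--Wysocki--Zehnder (Theorem 1.5 of \cite{HWZIII}) and Wendl (\cite{Wendl1}), and your sketch (index $2$, automatic transversality for a fast plane with odd Conley--Zehnder index, quotient by the $\R$-action, Siefring/positivity of intersections for disjointness, and SFT compactness for the uniqueness statement) is a correct outline of exactly those cited proofs. So your proposal is consistent with, and takes essentially the same route as, what the paper relies on.
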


       An application of the previous theorem to the finite energy plane $\tilde u_{3,1}$ 
       (recall that $P_3$ is non-degenerate and satisfies $\mu_{\rm CZ}(P_3)=3,$ see Proposition \ref{prop:lowwen}) 
       yields a maximal one-parameter family of embedded finite energy $\tilde J$-holomorphic planes
       \[
       \tilde u_{\tau} = (a_\tau, u_\tau) \colon \C  \to \R \times \Sigma, \quad \tau \in (\tau_-, \tau_+),
       \]
       all asymptotic to $P_3.$         
       This family 
       is non-compact. Indeed, otherwise there is $\tau_1 \neq \tau_2$ with $u_{\tau_1}(\C) \cap u_{\tau_2}(\C) \neq \emptyset.$ Then it follows from \cite[Theorem 1.4]{HWZII} that $u_{\tau_1}(\C) = u_{\tau_2}(\C),$ and hence we find an   open book decomposition of $\Sigma$ with binding $P_3$ and disc-like pages.  By the usual argument, every page is transverse to the Reeb vector field, implying that every periodic orbit other than $P_3$ is linked to $P_3.$ This contradicts to the presence of $P_2$ and $P_3'.$

       We now take a look at the breaking of the family $\{ \tilde u_\tau\}$ as $\tau \to \tau_\pm. $        For simplicity we write $\tau_-=0$ and $\tau_+=1.$  
 We assume that $\tau$ strictly increases in the direction of the Reeb vector field.

       Pick a sequence $\tau_n \to 0^+$ as $n \to +\infty$ and write $\tilde u_n:= \tilde u_{\tau_n}, n \in \N.$ 
              By a standard argument (see \cite{dPS1, HWZ03foliation, HS11}), we find that $\tilde u_n$ converges to a holomorphic building with height $2.$ The top consists of an embedded finite energy $\tilde J$-holomorphic cylinder $\tilde v=(b,v) \colon \C \setminus \{ 0 \} \to \R \times \Sigma $ asymptotic to $P_3$ at its positive puncture $+\infty$ and to an index $2$ orbit $Q$ at at its negative puncture $0.$            
              The bottom consists of an embedded finite energy $\tilde J$-holomorphic plane $\tilde u=(a,u) \colon \C \to \R \times \Sigma$ asymptotic to $Q.$ Moreover,   given a neighbourhood $U \subset \Sigma$ of $v(\C\setminus\{0\}) \cup Q  \cup u(\C),$ we have $u_n(\C) \subset U$ for $n$ sufficiently large. See  \cite{small} and also \cite[Proposition 9.5]{dPS1}. Since $P_2$ is a unique index $2$ orbit, see Proposition \ref{prop:lowwen}, we have $Q=P_2.$
The uniqueness property of finite energy planes asymptotic to $P_2$ and finite energy cylinders asymptotic to $P_3$ at $+\infty$ and to $P_2$ at $0$  (see \cite[Proposition C.-3]{dPS1}),   based on Siefring's intersection theory \cite{Sie11}, implies that   $\tilde v = \tilde v_1$ or $\tilde v=\tilde v_2$ and $\tilde u = \tilde u_{2,1}$ or $\tilde u = \tilde u_{2,2},$ up to reparametrisation and $\R$-translation. 

We now choose a sequence $\tau_n' \to 1^-$ as $n \to +\infty$ and do a similar business with $\tilde u_n' := \tilde u_{\tau_n'}.$ This sequence also converges to a holomorphic building with height 2 whose top level consists of an embedded finite energy cylinder $\tilde v' = (b',v') \colon \C \setminus \{ 0 \} \to \R \times \Sigma$ asymptotic to $P_3$ at its positive puncture $+\infty$ and to $P_2$ at its negative puncture $0$ and whose bottom consists of an embedded finite energy plane $\tilde u' = (a',u') \colon \C \to \R \times \Sigma$ asymptotic to $P_2.$ By uniqueness, we have $\tilde v' = \tilde v_1$ or $\tilde v'=\tilde v_2$ and $\tilde u '= \tilde u_{2,1}$ or $\tilde u '= \tilde u_{2,2},$ up to reparametrisation and $\R$-translation.  Moreover, 
             given a neighbourhood $U' \subset \Sigma$ of $v'(\C\setminus\{0\}) \cup P_2  \cup u'(\C),$ we have $u_n'(\C) \subset U'$ for $n$ sufficiently large.

       \begin{lemma}   $v(\C \setminus \{ 0 \} )=  v' ( \C \setminus \{ 0 \} ) = v_1(\C \setminus \{ 0 \})  ,$   $ u(\C) = u_{2,1}(\C)$ and $ u'(\C) = u_{2,2}(\C). $    
       \end{lemma}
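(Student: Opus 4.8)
The plan is to identify the two top cylinders from their asymptotics and the $\rho$‑symmetry, to rule out the two bottom planes being equal by a linking argument, and to fix the labelling using the core orbit $P_0$ together with the convention on $\tau$. For the cylinders: both $\tilde v$ and $\tilde v'$ are finite energy cylinders asymptotic to $P_3$ at the positive puncture and to $P_2$ at the negative puncture, so by the uniqueness property recalled above (the one based on Siefring's intersection theory) each agrees, up to reparametrisation and $\R$‑translation, with $\tilde v_1$ or $\tilde v_2$; since $\tilde v_2=(\tilde v_1)_\rho$ is asymptotic to $(P_3)_\rho=P_3'\neq P_3$ at its positive puncture, only $\tilde v_1$ is admissible. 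As the image in $\Sigma$ of a finite energy curve is insensitive to reparametrisation and to $\R$‑translation, this gives $v(\C\setminus\{0\})=v'(\C\setminus\{0\})=v_1(\C\setminus\{0\})$.

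Next I would show $u(\C)\neq u'(\C)$. By uniqueness, $\tilde u$ and $\tilde u'$ each agree, up to reparametrisation and $\R$‑translation, with $\tilde u_{2,1}$ or $\tilde u_{2,2}$. If $u(\C)=u'(\C)$, then the two ends of the maximal family $\{\tilde u_\tau\}_{\tau\in(0,1)}$ break onto the \emph{same} building $\tilde v_1\#\tilde u$; feeding this into the very argument that showed $\{\tilde u_\tau\}$ to be non‑compact --- now using the inclusions $u_n(\C)\subset U$ and $u_n'(\C)\subset U'$ for a common neighbourhood of $v_1(\C\setminus\{0\})\cup P_2\cup u(\C)$, together with positivity of intersections --- would produce an open book decomposition of $\Sigma$ with binding $P_3$ and disc‑like pages, whence every periodic orbit other than $P_3$ would be linked with $P_3$, contradicting the presence of $P_2$. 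Hence $\{u(\C),u'(\C)\}=\{u_{2,1}(\C),u_{2,2}(\C)\}$.

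It then remains to decide which plane occurs at $\tau\to\tau_-=0$. Here I would use that the simple orbit $P_0$ is exactly $\{r=0\}\cap\Sigma$, that each of $u_{3,1}(\C),u_{3,2}(\C),u_{2,1}(\C),u_{2,2}(\C)$ contains exactly one point with $r=0$, its centre, lying on $P_0$ --- these four centres occurring on the circle $P_0$, at $(x_2,y_2)=(0,\Lambda_1),(0,\Lambda_{\max}(0)),(2\pi,\Lambda_1),(2\pi,\Lambda_{\max}(2\pi))$, in this cyclic order --- and that each $u_\tau(\C)$, being positively transverse to the Reeb flow, meets $P_0$ in a single point $q(\tau)$. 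By disjointness of the $u_\tau(\C)$ the map $\tau\mapsto q(\tau)$ is an injective continuous path in $P_0$ with endpoints $u(\C)\cap P_0$ and $u'(\C)\cap P_0$; since $\tilde u_{3,1}$ lies in the family, $q$ passes through $(0,\Lambda_1)$, which forces it to trace the arc also containing $(0,\Lambda_{\max}(0))$ (so that $\tilde u_{3,2}$ is in the family too) and to run monotonically through $(0,\Lambda_1)$ and $(0,\Lambda_{\max}(0))$. Finally the direction in which $q$ runs, equivalently whether $\tilde u_{3,1}$ precedes or follows $\tilde u_{3,2}$ as $\tau$ grows, is pinned down by the convention that $\tau$ increases in the Reeb direction; a direct computation near $P_3$ using the explicit forms of the Reeb flow and of $\tilde u_{3,1},\tilde u_{3,2}$ then shows the endpoint reached as $\tau\to 0$ is the centre of $u_{2,1}(\C)$, giving $u(\C)=u_{2,1}(\C)$ and hence $u'(\C)=u_{2,2}(\C)$.

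The hardest point will be the distinctness argument: making precise that a maximal family of disjoint embedded finite energy planes asymptotic to $P_3$ whose two ends break onto the same holomorphic building must close up to a genuine open book. This rests on the gluing and intersection‑theoretic machinery already invoked in the construction --- Siefring's intersection theory, positivity of intersections, and $C^\infty_{\mathrm{loc}}$‑convergence to the broken configurations --- together with the asymptotic control of the family near $\tau_\pm$. Once this is in place, the cylinder identification is immediate and the labelling reduces to a finite orientation computation.
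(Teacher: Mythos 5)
Your identification of the cylinders is correct and in fact simpler than the paper's: SFT compactness forces the top levels $\tilde v,\tilde v'$ to be positively asymptotic to $P_3$, while $\tilde v_2=(\tilde v_1)_\rho$ is positively asymptotic to $P_3'\neq P_3$, so the alternative $v=v_2$ is excluded by the asymptotics alone (the paper instead rules it out by observing that otherwise some leaf $u_{\tau_0}$ would acquire a point with $x_2=2\pi$ and would then intersect $u_{2,1}$ or $u_{2,2}$, violating stability and positivity of intersections).

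The genuine gap is in your distinctness step $u(\C)\neq u'(\C)$. If both ends of the maximal family break onto the same building $\tilde v_1\,\cup\,\tilde u$, you do \emph{not} obtain an open book decomposition of $\Sigma$ with binding $P_3$ and disc-like pages: the limiting object is a \emph{broken} page $v_1(\C\setminus\{0\})\cup P_2\cup u(\C)$, which is not a disc and carries the orbit $P_2$ in its closure. Consequently the contradiction you invoke fails precisely at $P_2$: since $P_2$ sits inside the broken page, it never has to cross any disc leaf, so nothing forces $P_2$ to be linked with $P_3$. What is actually provable (and is what the paper establishes, via a Jordan--Brouwer separation argument and an open/closed argument applied to $A=\bigl(\bigcup_{\tau}u_\tau(\C)\bigr)\cap\mathrm{int}(C_1)$) is only the covering statement $\Sigma=P_3\cup v(\C\setminus\{0\})\cup P_2\cup u(\C)\cup\bigcup_{\tau}u_\tau(\C)$; the contradiction is then extracted not from $P_2$ but from an object the broken page cannot account for when $u(\C)=u'(\C)$, namely the two-sphere $\{x_2=2\pi\}\cap\Sigma=u_{2,1}(\C)\cup P_2\cup u_{2,2}(\C)$, only one of whose rigid planes lies in the broken configuration: covering forces some leaf $u_{\tau_0}$ to contain a point with $x_2=2\pi$, hence to intersect $u_{2,1}$ or $u_{2,2}$, contradicting stability and positivity of intersections (one could alternatively use the orbit $P_3'$, which is unlinked from $P_3$, but not $P_2$). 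Your final labelling step is also heavier than needed --- the paper settles it by the convention that $\tau$ increases in the Reeb direction --- and as written it has two soft spots: the cyclic order of the four centres on $P_0$ is $(0,\Lambda_1),(2\pi,\Lambda_1),(2\pi,\Lambda_{\max}(2\pi)),(0,\Lambda_{\max}(0))$, not the one you state, and the claim that each leaf $u_\tau$ meets $P_0$ in exactly one point itself needs an intersection-theoretic justification; both are patchable, but the distinctness argument as proposed is not.
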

       \begin{proof}

Arguing indirectly we assume that $v(\C\setminus \{ 0 \} ) = v_2 ( \C \setminus \{ 0 \}).$
     Recall that it is obtained as the $\Sigma$-part of a limit of the sequence $\tilde u_{\tau_n} = ( a_{\tau_n}, u_{\tau_n})$ with $\tau_n \to 0^+.$ 
     Given a neighbourhood $U$ of $v_2(\C \setminus \{ 0 \}) \cup P_2 \cup u (\C)$, 
     we have      $u_{\tau_n}(\C) \subset U$ for all $n$ sufficiently large. 
    Since the $x_2$-components of $\tilde u_{3,1}$, $\tilde v_2$ and $\tilde u$ satisfy $x_2 \equiv 0$, $2\pi < x_2 < 4\pi$ and $x_2 \equiv 2\pi,$ respectively, this implies that there is $\tau_0 \in (0,1)$ such that $u_{\tau_0}$ admits a point with  $x_2 = 2\pi.$ Thus, $u_{\tau_0}$ intersects $u_{2,1}$ or $u_{2,2},$ contradicting the  stability and positivity of intersections of immersed pseudoholomorphic curves (see \cite[Appendix E]{MSbook}). We conclude that $v(\C \setminus \{ 0 \} ) = v_1(\C \setminus \{ 0 \}).$  By the same reasoning, we obtain   $v'(\C \setminus \{ 0 \}) = v_1(\C \setminus \{ 0 \}).$

       We now show that the planes $u$ and $u'$ are disjoint.   We follow  the argument given in the proof of \cite[Proposition 3.17]{Oli21}. Assume by contradiction that   they have a non-empty intersection. Then Theorem 2.4 in \cite{Sie11} shows that $u(\C) = u'(\C).$       Pick any point $q \in \Sigma \setminus \left( P_3 \cup v(\C \setminus \{ 0 \}) \cup P_2 \cup u(\C) \right)$ and take a small neighbourhood  $\mathcal{U}$ of $P_3 \cup v(\C \setminus \{ 0 \} ) \cup P_2 \cup u(\C),$ not containing   $q.$ Then there exists $\tau_0$ and $ \tau_1$ contained in $(0,1),$ close enough to $0$ and $1,$ respectively, such that   $u_{\tau_0} (\C) , u_{\tau_1} (\C) \subset \mathcal{U}.$     By Jordan-Brouwer separation theorem, the piecewise smooth embedded two-sphere $S = u_{\tau_0}(\C) \cup P_3 \cup u_{\tau_1}(\C)$ divides $\Sigma$ into two components $C_1$ and $C_2$ having boundary $S.$ It follows from the choice of the point $q$ that one of the two components, say $C_1,$  contains $q$ in its interior and the other contains $P_3 \cup v(\C \setminus \{ 0 \}) \cup P_2 \cup u(\C) .$

    We claim that the  set   $A= \left( \bigcup_{\tau \in (0,1)}   u_{\tau}(\C)  \right) \cap {\rm int}(C_1)$ is non-empty, open and closed in ${\rm int}(C_1).$ In view of Theorem \ref{HWZThm} it is non-empty and open. 
      In order to achieve the closedness of $A$,   take a sequence $w_n \in A.$   Since $w_n$ is contained the closed subset $C_1,$   a limit of the sequence $w_n$ is contained in $C_1.$ On the other hand, since $w_n$ is also contained in the union $\bigcup_{\tau \in (0,1)}   u_{\tau}(\C),$  
      the compactness property of the family $\{u_{\tau}\}_{\tau \in (0,1)}$ described above implies  that  its limit is contained    either in  $\bigcup_{\tau \in (0,1)}   u_{\tau}(\C),$   in $P_3,$ or   in $v(\C \setminus \{ 0 \} ) \cup P_2 \cup u(\C).$ The last option does not occur because  $v(\C \setminus \{ 0 \} ) \cup P_2 \cup u(\C)$ is contained in the interior of $C_2.$ 
     Therefore,    a limit of the sequence $w_n$ is contained in   $\left(   \bigcup_{\tau \in (0,1)}   u_{\tau}(\C)    \cup P_3 \right) \cap C_1,$  from which 
  we see that the set  $\left(   \bigcup_{\tau \in (0,1)}   u_{\tau}(\C)    \cup P_3 \right) \cap C_1$ is  closed in $\Sigma.$ This proves the claim.

      The claim above implies that $q \in \bigcup_{\tau \in (0,1)}   u_{\tau}(\C) .$  
It follows from the choice of  $q$ that
\[
\Sigma =   P_3 \cup v(\C \setminus \{ 0 \}) \cup P_2 \cup u(\C)   \cup  \bigcup_{\tau \in (0,1)}   u_{\tau}(\C)  ,
\]
and hence there exists $\tau_0 \in (0,1)$ such that $u_{\tau_0}$ admits a point with $x_2 = 2\pi,$ which derives a contradiction as above.  Consequently, we find  $u(\C)  \cap u'(\C) = \emptyset. $   Since  $\tau$ increases  in the direction of the Reeb vector field, we have $u(\C) = u_{2,1}(\C)$ and $u'(\C) = u_{2,2}(\C).$ This completes the proof.   
      \end{proof}

Abbreviate
\[
  \mathfrak{B}=  \{ (x_1, x_2, y_1, y_2) \in \Sigma \mid x_2 \leq 2\pi\},
  \]
  whose boundary is the piecewise smooth embedded two-sphere $u_{2,1}(\C) \cup P_2 \cup u_{2,2}(\C).$
 The proof of the previous lemma shows that  the images of  the planes $u_{2,1}, u_{2,2}$ and $u_{\tau}, \tau \in (0,1)$ and the cylinder  $v_1$    determine a smooth foliation of $\mathfrak{B} \setminus ( P_2 \cup P_3).$ 
Since the finite energy plane $\tilde u_{3,2}$ corresponds to $x_2 \equiv0,$ we have $u_{3,2}(\C)  \subset  \mathfrak{B}\setminus ( P_2 \cup P_3).$   Therefore, there is $\tau_* \in (0,1)$ such that $u_{\tau_*}(\C) \cap u_{3,2}(\C) \neq \emptyset.$ 
We find again by  \cite[Theorem 1.4]{HWZII} that $u_{\tau_*}(\C) = u_{3,2}(\C).$

Note that
\[
\rho (\mathfrak{B} ) = \overline{ \Sigma \setminus \mathfrak{B}} 
= \{ (x_1, x_2, y_1, y_2) \in \Sigma \mid x_2 \geq 2\pi\}.
\]
We then push all the data above to the region $\rho (\mathfrak{B} ).$ More precisely, we have a one-parameter family of embedded finite energy $\tilde J$-holomorphic planes
\[
(\tilde u_{\tau})_{\rho} \colon \C \to \R \times \Sigma, \quad \tau \in (\tau_-, \tau_+),
\]
all asymptotic to $P_3 ' = \rho(P_3).$ 
Note that $(u_{\tau_*})_\rho(\C) = u_{3,2}'(\C).$
This family breaks onto $\tilde v_2 =(\tilde v_1)_\rho$ and $\tilde u_{2,1}  = (\tilde u_{2,1})_\rho$ at one end and onto $\tilde v_2 $ and $\tilde u_{2,2}  = (\tilde u_{2,2})_\rho$ at the other end. The images of maps $u_{2,1},u_{2,2}, (u_{\tau})_\rho, \tau \in (\tau_-, \tau_+)$ and $v_2$ fill the three-ball $\rho(\mathfrak{B}).$

       We have constructed a stable finite energy   foliation $\tilde{\mathcal{F}}$  of $\R \times \Sigma,$  which is symmetric with respect to the exact anti-symplectic involution $\tilde \rho$ and whose leaves are   embedded finite energy planes and cylinders.  
    Denote by $\mathcal{F}$ the projection  of $\tilde{\mathcal{F}}$ to $\Sigma,$ which is symmetric with respect to the anti-contact involution $\rho.$ 
    Since finite energy planes in $\tilde{\mathcal{F}}$ are asymptotic to   $P_2$ or $P_3,$ their $\Sigma$-projections are transverse to the Reeb vector field \cite{Hry12fast}. See also \cite[Section 13.6]{FvK18book}. It is straightforward to see that  the $\Sigma$-parts $v_1, v_2$ of the cylinders  $\tilde v_1 , \tilde v_2$ are transverse to the Reeb vector field as well. Therefore,
  $\mathcal{F}$   
    determines a transverse foliation of $\Sigma$ whose    binding orbits are $P_2 ,P_3$ and $P_3'$ and whose regular leaves are embedded planes and cylinders.   See Figure \ref{fig:folaitionspre}.

 \begin{figure}[ht!]
  \centering
  \includegraphics[width=0.7\linewidth]{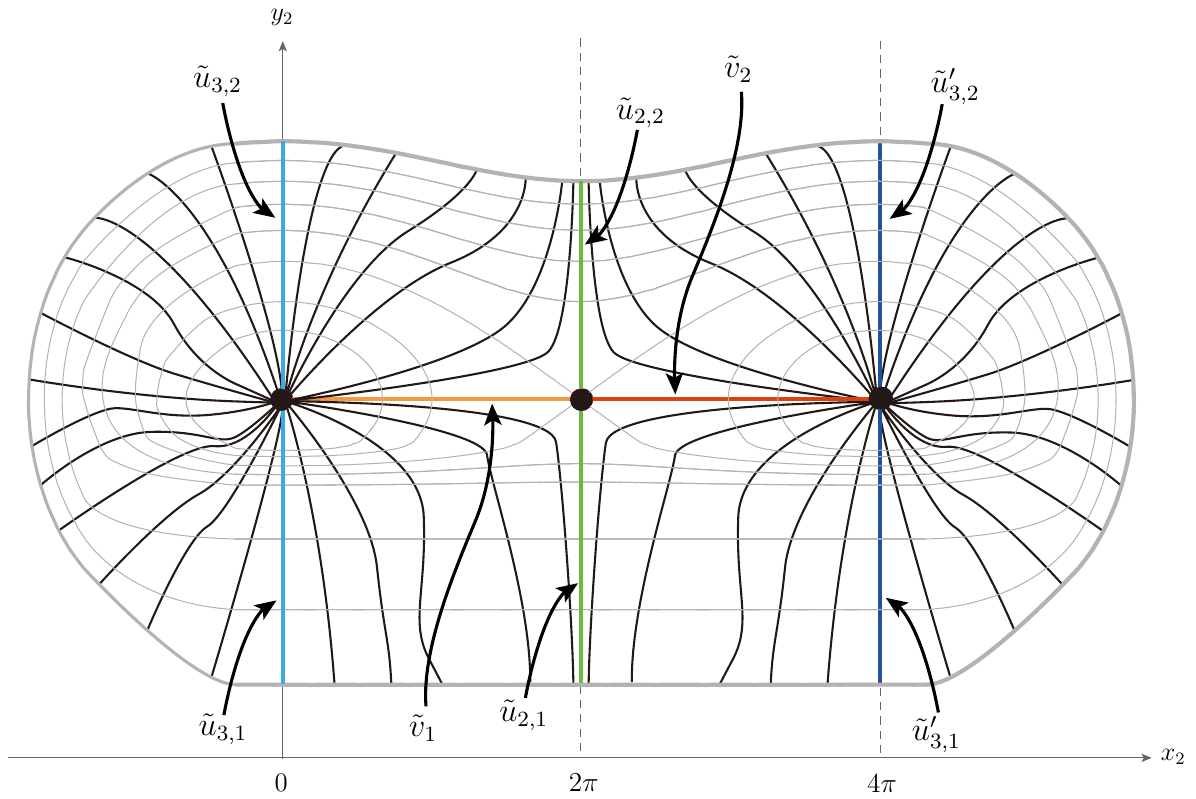}
    \caption{The projection of the transverse foliation $\mathcal{F}$ to the $(x_2,y_2)$-plane. The black curves indicate the families of planes.     }
    \label{fig:folaitionspre}
\end{figure}

\begin{remark}
    A transverse foliation satisfying the properties above is called a weakly convex foliation. For more details, see \cite{Weakconvex}.
\end{remark}

We now cut the energy level $\Sigma$ along the two-spheres 
\begin{align*}
B &= \{ (x_1, x_2, y_1, y_2) \in \Sigma \mid x_2 = 0 \}  = u_{3,1}(\C) \cup P_3 \cup u_{3,2}(\C),\\  
\rho(B) &= \{ (x_1, x_2, y_1, y_2) \in \Sigma \mid x_2 = 4 \pi \}  = u_{3,1}'(\C) \cup P_3' \cup u_{3,2}'(\C). 
\end{align*}
Denote by $\Sigma_0$ a unique component whose boundary is given by $B \cup \rho(B).$ 
It is diffeomorphic to $S^2 \times I,$ where $I$ is a closed interval. 
We then identify   $B$ and $\rho(B) ,$ providing    us with  $\bar{\Sigma} \cong S^2 \times S^1.$ Recall that $\bar{\Sigma}$ is a unique connected component of $\bar{H}^{-1}(c)$ which contains $\bar{\Sigma}_{\rm direct}^u,$ the double cover of $\Sigma_{\rm direct}^u.$ See Section \ref{sec:interpol}.  
 The contact form $\lambda,$ constructed in Section \ref{sec:form}, induces a contact form $\bar{\lambda}$ on $\bar{\Sigma},$ and the anti-contact involution $\rho$ induces an anti-contact involution on the contact manifold $(\bar{\Sigma}, \bar{\lambda}).$ We   denote it again by $\rho.$ The unique index-2 periodic orbit $P_2$ still lies on $\bar{\Sigma},$ and the two index-3 binding orbits $P_3$ and $P_3'$ now provide a single symmetric periodic index-3 orbit, still denoted by $P_3.$ The periodic orbit $P_0$ yields two periodic orbits, one satisfies $y_2 = \Lambda_{\max}$ and the other satisfies $y_2 \equiv \Lambda_1.$ Note that the latter is the second iterate of the direct circular orbit $\gamma_{\rm direct}^u.$

The almost complex structure $J \colon \xi \to \xi$ as in \eqref{eq:Jcomp} induces a $d\bar{\lambda}$-compatible and $\rho$-anti-invariant  almost complex structure $\bar J \colon \bar{\xi} \to \bar{\xi},$ where $\bar{\xi} = \ker \bar{\lambda}. $ Hence, the finite energy foliation $\tilde{\mathcal{F}}$ of $\R \times \Sigma$ induces a finite energy foliation of $\R \times \bar{\Sigma}$ consisting of the following:
\begin{itemize}

\item  finite energy planes $\tilde u_{3,1}$ and $\tilde u_{3,2},$   corresponding to $x_2 \equiv 0  $ (mod $4\pi$) and initial conditions $y_2(0) \in (\Lambda_1, \Lambda_3)$ and $y_2(0) \in (\Lambda_3, \Lambda_{\max}),$ respectively.
They satisfy $\tilde u_{3,j} = ( \tilde u_{3,j})_\rho, j=1,2$ and are both asymptotic to $P_3.$

\item  finite energy planes $\tilde u_{2,1}$ and $\tilde u_{2,2},$ corresponding to $x_2 \equiv 2 \pi$  (mod $4\pi$) and initial conditions $y_2(0) \in (\Lambda_1, \Lambda_3)$ and  $y_2(0) \in (\Lambda_3, \Lambda_{\max}),$ respectively. 
They satisfy $\tilde u_{2,j} = ( \tilde u_{2,j})_\rho, j=1,2$ and are both asymptotic to $P_2.$

\item  finite energy cylinders $\tilde v_1$ and $ \tilde v_2 = (\tilde v_1)_\rho,$ which corresponds to $y_2 \equiv \Lambda_3 $ and initial conditions $x_2(0) \in (0, 2\pi)$ and $x_2(0) \in (2 \pi, 4 \pi),$ respectively.  
They are  both asymptotic to $P_3$ at  $+\infty$ and to $P_2$ at   $-\infty.$

\item  two families of finite energy planes  $(\tilde{u}_{j, \tau}),$ $\tau \in (0,1).$ 
For each $j=1,2,$ the family $(\tilde{u}_{j, \tau})$ breaks onto $\tilde v_1$ and $\tilde u_{2,j}$ at one end and onto $\tilde v_2$ and $\tilde u_{2,j}$ at the other end.

\end{itemize}
The projection of the finite energy foliation above defines a transverse foliation $\bar{\mathcal{F}}$ of $\bar{\Sigma},$ whose binding orbits are $P_2 $ and $P_3$ and whose regular leaves are embedded planes and cylinders.

 Denote by $\mathcal{T} $ the compact subset of the energy level $\bar \Sigma$, which projects into the $(x_2, y_2)$-plane (modulo $4\pi$ in $x_2$) as the set
\[
\{  (x_2, y_2) \mid   x_2  \in \R / 4 \pi \Z,\ \Lambda_1 \leq y_2 \leq \Lambda_3\}.
\]
        Note that  
        the double cover   $\bar{\Sigma}_{\rm direct}^u$ of    the set $\Sigma_{\rm direct}^u$ under consideration  is contained in $\mathcal{T}  $ and that $\mathcal{T} $ is diffeomorphic to a solid torus and has boundary 
       \[
       \partial \mathcal{T}= P_3 \cup P_2    \cup v_1(\C \setminus \{ 0 \}) \cup v_2 ( \C \setminus \{ 0 \})
       \]
       lying over the set $    \{ (x_2, \Lambda_3) \mid x_2 \in \R / 4 \pi \Z \}.$ 
        The interior of $\mathcal{T}$ is filled with the images of planes $u_{2,1}$ and  $u_{\tau}=u_{1,\tau}, \tau \in (0,1).$  The core of this solid torus is the second iterate of the  direct circular orbit $\gamma^u_{\rm direct}.$

       Let $F \in \bar{\mathcal{F}}$ be a plane asymptotic to $P_3$ that is not rigid and    contained in $\mathcal{T}_1.$     
       For $j=1,2,$ denote 
         $\Pi_j (x_1, x_2, y_1, y_2) = (x_j, y_j), (x_1, x_2, y_1, y_2) \in \bar \Sigma.$
       Then $\Pi_1(F) $ is  a closed disc of radius $r_3$ centred at the origin and $\Pi_2(F)$ is an arc connecting $(x_2, \Lambda_1)$ for some $  x_2 \in (0,4\pi) $ and $(0, \Lambda_3) \equiv ( 4\pi, \Lambda_3).$      If $(x_1, x_2, y_1, y_2) \in \bar{\Sigma}$ is such that $(x_2, y_2) \in \Pi_2(F)$ with $y_2 <\Lambda_2,$   then $(x_1, y_1)$ lies along the circle
       \begin{equation}\label{eq:x1x2y1y2}
       x_1^2 + y_1^2 = 2\left(c + \frac{1}{2y_2^2}+y_2\right).
       \end{equation} See \eqref{eq:orihamlow}.
       Note that the right-hand side above equals zero if and only if $y_2 =\Lambda_1.$

 In order to show that $\bar{\mathcal{F}}$ intersects  the set $\bar{\Sigma}_{\rm direct}^u$ in embedded discs, we have to rule out the presence of a regular leaf  $F \in \bar{\mathcal{F}}$ having the property that there is $y_2 <\Lambda_2$ such that $\# [\Pi_2(F)\cap \{ (x_2, y_2) \mid 0<x_2<4\pi\}]>1,$ as
 illustrated in Figure \ref{fig:no}.   
   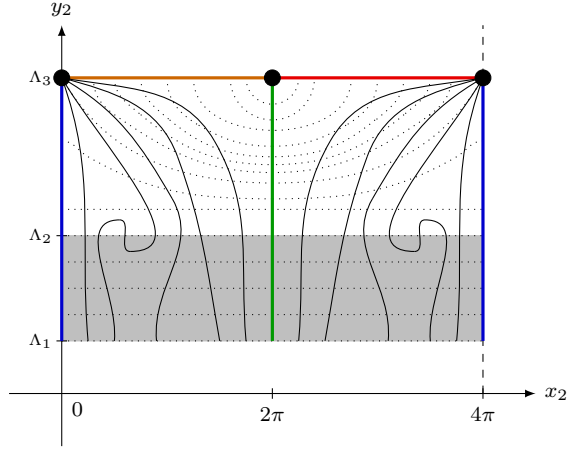
\begin{figure}[ht!]
     \centering
\begin{tikzpicture} [scale=0.7]

 \filldraw[draw=lightgray,fill=lightgray] (0,1) rectangle (8,3);

 \draw[dashed] (8,0) to (8,7);
\draw[->] (-1,0) to (9 ,0);
\draw[->] (0,-1) to (0,7);
 
\begin{scope}[yshift=-1cm]
  
\begin{scope}[yscale=-1, yshift=-14cm]
\draw[  dotted]  (0,7) to [out=60, in=180] (4.057,9);
\begin{scope}[xscale=-1, xshift=-8cm]
\draw[ dotted]  (0,7) to [out=60, in=180] (4,9);
\end{scope}
\end{scope}

\begin{scope}[yscale=-1, yshift=-14cm]
 \draw[ dotted]    (3.5,7) arc (180:0:0.5cm and 0.5cm);
\draw [ dotted]   (3.2,7) arc (180:0:0.8cm and 0.8cm);
 \end{scope}
 
\begin{scope}[yscale=-1, yshift=-14cm]
\draw[, dotted] (2.5, 7) to [out =90, in=180]  (4.057, 8);

\draw[ dotted] (2 , 7) to  [out =90, in=210](2.6,7.9);
\draw[ dotted] (2.6,7.9) to  [out =30, in=180](4, 8.3);

\draw[ dotted] (1.5, 7) to  [out =90, in=220](2.05, 7.8 );
\draw[ dotted] (2.05, 7.8) to  [out =40, in=180](4, 8.5);

\draw[ dotted] ( 1, 7) to [out =90, in=220] (1.5, 7.7 );
\draw [ dotted]( 1.5, 7.7) to [out =40, in=180] (4, 8.65);

\draw[ dotted] (0.6, 7) to  [out =90, in=215](1, 7.7);
\draw[ dotted] (1, 7.7) to  [out =35, in=180](4, 8.8);

\begin{scope}[xscale=-1, xshift=-8cm]
\draw[ dotted] (2.5, 7) to [out =90, in=180]  (4, 8);

\draw[ dotted] (2 , 7) to  [out =90, in=210](2.6,7.9);
\draw[ dotted] (2.6,7.9) to  [out =30, in=180](4, 8.3);

\draw[ dotted] (1.5, 7) to  [out =90, in=220](2.0584, 7.82 );
\draw[ dotted] (2.05, 7.8) to  [out =40, in=180](4, 8.5);

\draw[ dotted] ( 1, 7) to [out =90, in=220] (1.5, 7.7 );
\draw[ dotted] ( 1.5, 7.7) to [out =40, in=180] (4, 8.65);

\draw[ dotted ] (0.6, 7) to  [out =90, in=215](1.05, 7.73);
\draw[ dotted] (1, 7.7) to  [out =35, in=180](4, 8.8);
\end{scope}
\end{scope}
   
\begin{scope}[yscale=-1, yshift=-14cm]
\draw[ dotted,]     (0,8.2 ) to  [out=0 , in=210]  (0.5, 8.5);
\draw[, dotted  ]     (0.5,8.5) to  [out=30 , in=180]  (4.04, 9.3);
\begin{scope}[xscale=-1, xshift=-8cm]
\draw[, dotted]     (0,8.2 ) to  [out=0 , in=210]  (0.5, 8.5);
\draw[ dotted  ]     (0.5,8.5) to  [out=30 , in=180]  (4.04, 9.3);
\end{scope}
\end{scope}
\end{scope}

\draw[  dotted ]     (0,1) to    (8, 1);
\draw[ dotted ]     (0,1.5) to    (8, 1.5);
\draw[ dotted ]     (0,2) to    (8, 2);
\draw[ dotted ]     (0,2.5) to    (8, 2.5);
\draw[ dotted ]     (0,3) to    (8,3);
\draw[ dotted ]     (0,3.5) to    (8, 3.5);
 \draw (4,-0.1) to (4,0.1);
 \draw (8,-0.1) to (8,0.1);
 \draw (-0.1, 1) to (0.1,1);
 \draw (-0.1, 3) to (0.1,3);
 
  \draw[very thick, black!20!blue] (0,1) to (0,6);
  \draw[very thick, black!20!blue] (8,1) to (8,6);
  
    \draw[very thick, black!20!orange] (0,6) to (4,6);
    \draw[very thick, black!10!red] (8,6) to (4,6);

      \draw[very thick, black!40!green] (4,1) to (4,6);

\begin{scope}[xscale=-1, xshift=-12cm]
\begin{scope}[xscale=-1, xshift=-8cm]
\draw  (0.5,1) [out=85, in=240] to (1.1,4.8 );
\draw (1.1, 4.8 ) [out=60, in=190] to (4,6);
\draw  (1,1) to [out=80, in=250]   (1.8,4.5);
\draw  (1.8,4.5) [out=70, in=195] to (4,6);

\end{scope}

\draw  (4.5,1) [out=95, in=290] to (4,6);

\draw  (5,1) to [out=85, in=180] (5.1, 3.3);
\draw (5.1, 3.3) to [out=0, in=180] (5.3, 2.7);
\draw (5.3, 2.7) to [out=0, in=300] (4,6);
 
 \draw  (5.8,1) to [out=95, in=300](6.1,3.7);
\draw (6.1, 3.7) to [out=120, in=330] (4,6);

\end{scope}

\begin{scope}[xscale=-1, xshift=-8cm]

\begin{scope}[xscale=-1, xshift=-12cm]
\begin{scope}[xscale=-1, xshift=-8cm]
\draw  (0.5,1) [out=85, in=240] to (1.1,4.8 );
\draw (1.1, 4.8 ) [out=60, in=190] to (4,6);
\draw  (1,1) to [out=80, in=250]   (1.8,4.5);
\draw  (1.8,4.5) [out=70, in=195] to (4,6);

\end{scope}

\draw  (4.5,1) [out=95, in=290] to (4,6);

\draw  (5,1) to [out=85, in=180] (5.1, 3.3);
\draw (5.1, 3.3) to [out=0, in=180] (5.3, 2.7);
\draw (5.3, 2.7) to [out=0, in=300] (4,6);
 
 \draw  (5.8,1) to [out=95, in=300](6.1,3.7);
\draw (6.1, 3.7) to [out=120, in=330] (4,6);

\end{scope}
\end{scope}




 \node at (0,6) [left] {\tiny $\Lambda_3$};
  \node at (0,1) [left] {\tiny$\Lambda_1$};
 \node at (0,3) [left] {\tiny$\Lambda_2$};
 \node at (0.3,0) [below] {\footnotesize$0$};
\node at (4,-0.08) [below] {\footnotesize$2\pi$};
 \node at (8,-0.08) [below] {\footnotesize$ 4\pi$};
\node at (0,7) [above] {\footnotesize$y_2$};
\node at (9,0)[right] {\footnotesize$x_2$};
 \draw[fill ] (0,6) circle (0.15cm);
\draw[fill ] (8 ,6) circle (0.15cm);
\draw[fill ] (4,6) circle (0.15cm);
  \end{tikzpicture}
    \caption{An unpleasant scenario that is not the case.}
    \label{fig:no}
 \end{figure}  
To this end, fix a regular leaf $F \in \bar{\mathcal{F}},$ asymptotic to $P_3$ and contained in $\mathcal{T}  \setminus \partial \mathcal{T} .$ Let $\tilde u = (a,u)  $ be a corresponding finite energy plane. Denote  $u(s,t) = (x_1(s,t), x_2(s,t), y_1(s,t), y_2(s,t))$ as before. Assume by contradiction that there is $(s_0, t_0)  $ such that $ \Lambda_1 < y_2(s_0, t_0) \leq \Lambda_2$ and $  (y_2)_s(s_0, t_0) = ( y_2)_t(s_0, t_0)=0.$ Introduce polar coordinates $(r, \theta)$ in the $(x_1, y_1)$-plane. 
By means of  \eqref{eq:x1x2y1y2} we find that
\[
r(s,t) r_\sigma(s, t) = -  (y_2)_\sigma (s,t) \left( \frac{1}{y_2(s,t)^3} -1\right), \quad \sigma = s, t.
\]
Since $y_2(s_0, t_0)> \Lambda_1 >1,$ we find $r(s_0,t_0)>0,$ and hence  $  r_s(s_0,t_0) =  r_t(s_0,t_0)=0.$ Let $\gamma(t) = (x_1(t), x_2(t), y_1(t), y_2(t))$ be a Hamiltonian trajectory that intersects $F$ at $u(s_0,t_0).$ Since $y_2 \leq \Lambda_2,$ without loss of generality we may assume that $\bar{H} = H, $  where the latter is as in \eqref{eq:orihamlow}. Since $H$ is free of the variable $x_2,$ we find using Hamilton's equations that $y_2(t) = y_2(s_0, t_0), \forall t.$ Moreover, $\Pi_1(\gamma(t))$ moves along the circle of radius $r(s_0,t_0).$  The discussion so far shows that $F$ and $\gamma$ have a non-empty tangent intersection at $u(s_0, t_0),$ contradicting the fact that $F$ is transverse to the Hamiltonian flow.

       It remains to show that the associated first return map admits a unique fixed point corresponding to the direct circular orbit $\gamma_{\rm direct}^u.$ It is obvious that $\gamma_{\rm direct}^u$ corresponds to a fixed point.
       Recall that every periodic orbit $\gamma \neq \gamma_{\rm direct}^u$   in $\Sigma_{\rm direct}^u$   is a $T_{k,l}$-type orbit for some coprime positive integers $k,l.$ See Section \ref{sec:RKP}. It is easy to see that $k<l$ on $\Sigma_{\rm direct}^u.$ See for instance \cite[Section 6]{RKP13} or \cite[Section 3]{KimRKP}. This implies that the periods of $\Pi_1\gamma$ and $\Pi_2 \gamma$ do not coincide, and hence every $T_{k,l}$-type orbit  intersects each leaf at least twice.

                We conclude that the transverse foliation $\bar{  \mathcal{F}}$  of $\bar \Sigma$ intersects  $\Sigma_{\rm direct}^u$ in embedded discs, providing a disc foliation of $\Sigma_{\rm direct}^u.$ Moreover, the associated first return map has a unique fixed point corresponding to $\gamma_{\rm direct}^u.$ This finishes the proof of Theorem \ref{thm:main} for energies below the critical value.

       \begin{remark}\label{rmk:app3}
       In Appendix \ref{sec:app} we will embed $\Sigma_{\rm direct}^u$ into a closed three-manifold $\hat \Sigma \cong S^2 \times S^1$ and then construct its finite energy foliation, projecting to an open book decomposition of $\hat \Sigma$ with annulus-like pages. The intersection of  this open book decomposition with  $\Sigma_{\rm direct}^u$ provides   a  foliation of   $\Sigma_{\rm direct}^u$  
       into annuli, whose inner boundaries are the direct circular orbit $\gamma_{\rm direct}^u$ and the outer boundaries lie on the boundary torus $\p \Sigma_{\rm direct}^u.$     The construction is explicit and does not require the implicit function theorem and compactness argument. 
       \end{remark}

      \section{Above the critical energy}\label{sec:higher}
  
  
  Fix $-\frac{3}{2} <c<0$ and let   $E_0    \in (E_{\rm retro}^u,c).$  Recall that  $E =c$ corresponds to the collision orbits. This section is devoted to prove the assertion of Theorem \ref{thm:main} for the set
\begin{equation*}\label{defset2}
     \Sigma _{\rm retro}^u = \{\text{all periodic orbits of $X_H$ with   $E \in [E_{\rm retro}^u, E_0]$}\} \subset \Sigma_c^u,
     \end{equation*}    
     which is diffeomorphic to a solid torus with core being the retrograde circular orbit $\gamma_{\rm retro}^u.$ Note that it  contains   only retrograde orbits and no collision orbits.  

  In this case   the transformed  Hamiltonian is given by
      \begin{equation*}\label{eq:orihamd}
      H (x_1, x_2 , y_1, y_2) = - \frac{1}{2y_2^2} - y_2 - \frac{1}{2} ( x_1 ^2 + y_1 ^2)  
      \end{equation*}
      for $(x_1, x_2, y_1, y_2) \in   \overline{\mathcal{P}}' \subset \R \times \R/ 2 \pi \Z \times \R \times \R_-.$  
      For convenience, we shall work with $-H$ instead of $H,$ so that every orbit is oriented in the  opposite direction. Note that the qualitative behaviour of the dynamics does not change. By abuse of notation, we denote $-H$ again by $H,$ namely, we have
        \begin{align*}\label{eq:orihamd2}
     \begin{split}
      H (x_1, x_2 , y_1, y_2) &=  \frac{1}{2y_2^2} + y_2 + \frac{1}{2} ( x_1 ^2 + y_1 ^2)  \\
      &= H_1(x_1, y_2) - H_2(y_2),
      \end{split}
     \end{align*}
      where $H_1$ and $H_2$ are as in \eqref{eq:H1H2}. 
        Note that  
      \[
      y_2 \in [\nu_2, \nu_1] \quad \text{ on } \  \Sigma _{\rm retro}^u, 
      \]
      where    $\nu_1 =  - \sqrt{ - \frac{1}{ 2E_{\rm retro}^u}}  \in  ( - \frac{1}{\sqrt[3]{2}}, -\frac{1}{2})$ and $\nu_2 = - \sqrt{ - \frac{1}{2E_0}}.$

 Arguing as in Section \ref{sec:discfol}   one can construct a disc foliation of $\Sigma_{\rm retro}^u$   by looking at the negative gradient flow lines of $-H_2.$    See Figure \ref{fig:ngflK3}. 
   \begin{figure}[ht]
     \centering
\begin{tikzpicture}

 \draw[->] (-0.5,3) to (5,3);
 \draw[->] (0,0) to (0,3.5);

 \filldraw[draw=lightgray,fill=lightgray] (0,0.5) rectangle (4,2.5);
 \draw[dashed] (2,3) to (2,2.5);
 \draw[dashed] (4,3) to (4,2.5);
 \node at ( -0.2,3) [above] {$0$};
 \node at (2,3) [above] {$ \pi$};
 \node at (4,3) [above] {$2\pi$};
 \node at (0,0.5) [left] {$\nu_2$};
 \node at (0,2.5) [left] {$\nu_1$};
 \node at (0,3.5) [above] {$y_2$};
 \node at (5,3) [right] {$x_2$};
 
 \draw[dashed, gray] (0,0.5) to (4, 0.5);
 \draw[dashed, gray] (0,0.9) to (4, 0.9);
 \draw[dashed, gray] (0,1.3) to (4, 1.3);
 \draw[dashed, gray] (0,1.7) to (4, 1.7);
 \draw[dashed, gray] (0,2.5) to (4, 2.5);
 \draw[dashed, gray] (0,2.1) to (4,2.1);
  
 \draw[blue, thick, ->] (0.5,2.5) to (0.5, 1.5);
 \draw[blue, thick] (0.5,2.5) to (0.5, 0.5);
 
  \draw[blue,thick, ->] (0 ,2.5) to (0 , 1.5);
 \draw[blue, thick] (0 ,2.5) to (0 , 0.5);
 
  \draw[blue, thick, ->] (1,2.5) to (1, 1.5);
 \draw[blue, thick] (1,2.5) to (1, 0.5);
 
  \draw[blue, thick, ->] (1.5,2.5) to (1.5, 1.5);
 \draw[blue, thick] (1.5,2.5) to (1.5, 0.5);
 
  \draw[blue, thick, ->] (2,2.5) to (2, 1.5);
 \draw[blue, thick] (2,2.5) to (2, 0.5);
 
  \draw[blue, thick, ->] (2.5,2.5) to (2.5, 1.5);
 \draw[blue, thick] (2.5,2.5) to (2.5, 0.5);
 
   \draw[blue,thick, ->] (3.5,2.5) to (3.5, 1.5);
 \draw[blue,thick] (3.5,2.5) to (3.5, 0.5);
 
   \draw[blue,thick, ->] (3,2.5) to (3, 1.5);
 \draw[blue,thick] (3,2.5) to (3, 0.5);
 
   \draw[blue,thick, ->] (4,2.5) to (4, 1.5);
 \draw[blue,thick] (4,2.5) to (4, 0.5);

  \end{tikzpicture}
    \caption{Negative gradient flow lines of $-H_2.$ Dashed lines indicate Hamiltonian trajectories.}
    \label{fig:ngflK3}
 \end{figure}
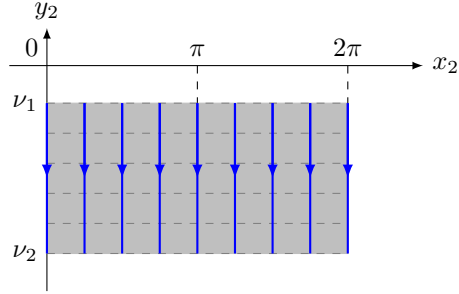  
 Let $h_{x_2}$ be the negative gradient flow line of $-H_2$ corresponding to $x_2 \in \R / 2\pi \Z.$  Its preimage $\Pi^{-1}(h_{x_2})$ under the projection $\Pi(x_1, x_2, y_1, y_2) = (x_2, y_2), (x_1, x_2, y_1, y_2) \in \Sigma_{\rm retro}^u$ is an embedded disc, transverse to the Hamiltonian flow. By varying $x_2$ in $\R / 2 \pi \Z,$ we find a disc foliation of $\Sigma_{\rm retro}^u.$ 
 

     Fix $\nu_3 \in (-\infty, \nu_2)$  and $0< \varepsilon_0 < \frac{1}{2} (\nu_2 - \nu_3). $ Consider
      \[
      U(x_2, y_2) =  \frac{1}{2} ( y_2 - \nu_3)^2  - \cos \frac{x_2}{2} +D,
      \]
      where
     $(x_2, y_2) \in \R / 4 \pi \Z \times \R_+,$ and  
      $D$ is a constant satisfying
      \begin{equation*}\label{eq:sec6B}
     D<   \min\{0,  \frac{1}{2 ( \nu_2-\varepsilon_0)^2}  + (\nu_2 - \varepsilon_0)  - \frac{1}{2} ( \nu_2 -  \varepsilon_0 - \nu_3)^2 -1 \} .
      \end{equation*}
      Set 
      \begin{equation*}
          V(x_2, y_2) =- \left( 1-f(y_2) \right) H_2(y_2) + f(y_2) U(x_2, y_2),
      \end{equation*}
where a non-increasing smooth function $f \colon \R \to [0,1]$ satisfies   $f(y_2) = 1, \forall  y_2 < \nu_2 - 2\varepsilon_0$ and $f(y_2)=0, \forall y_2 > \nu_2 - \varepsilon_0.$ 
      See Figure \ref{fig4}.
        \begin{figure}[ht]
  \centering
  \includegraphics[width=0.5\linewidth]{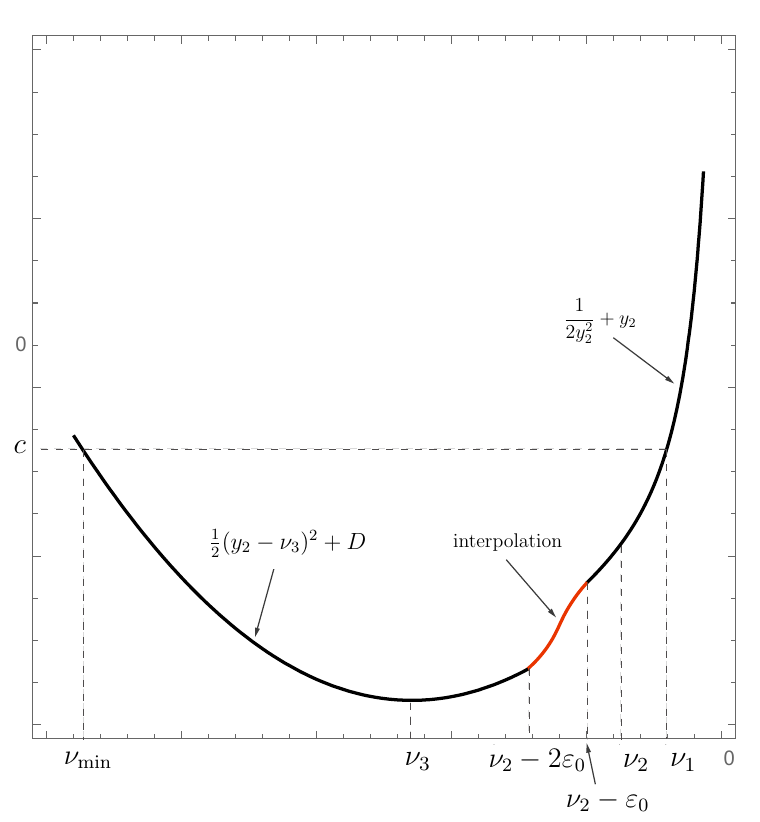}
  \caption{Interpolation for variable $y_2$ }
  \label{fig4}
\end{figure}
Because of the choice of the constant $D,$ we have
\[
V_{y_2} <0, \ \forall y_2 < \nu_3 \quad \text{ and } \quad V_{y_2} >0, \forall y_2 < \nu_3,
\]
and $(0, \nu_3)$ and $(2\pi, \nu_3)$ are the only critical points of $V.$ 
Given $x_2  ,$ denote by $\nu_{\min} = \nu_{\min}(x_2)$ the value of $y_2$ satisfying $V(x_2, \nu_{\min})=-c.$  
 
      
 We define 
 \begin{equation*}
     \bar{H}(x_1, x_2, y_1, y_2) = H_1(x_1, y_1) + V(x_2, y_2) ,
 \end{equation*}
where $(x_2, y_2) \in \R / 4 \pi \Z \times \R_+,$ and note that it is invariant under the anti-symplectic involution $\rho$ as in \eqref{eq:rhofirst}.

We now break the periodicity in $x_2$ precisely as in Section \ref{sec:interpol}. Namely, we  consider
\[
W(x_2, y_2)=\frac{1}{2}x_2^2 + \frac{1}{2} (y_2 -\nu_3)^2
\]
and interpolate
\[
\tilde V (x_2, y_2) = \left( (1-g(x_2) \right) W(x_2, y_2) + g(x_2) V(x_2, y_2),
\]
where $g$ is a non-decreasing smooth function $g\colon \R \to [0,1]$ such that $g(x_2)=0, \forall x_2 < -2\varepsilon_1$ and $g(x_2) =1, \forall x_2 >-\varepsilon_1$ with $\varepsilon_1>0$ small enough. By repeating in a similar manner for $x_2 \geq 2\pi$ in a symmetric way, we obtain a smooth function $\tilde H_2(x_2, y_2).$ Note that it is symmetric in $x_2$ with respect to $x_2 =2\pi.$

We then obtain a modified Hamiltonian
\[
\tilde H (x_1, x_2, y_1, y_2) = H_1(x_1, y_1) + \tilde H_2( x_2, y_2).
\]
It is easy to see that $\tilde H$   admits exactly three critical points given in \eqref{eq:criitodfhildH} and satisfies
\begin{equation*}\label{eq:tildehy2negativehighre}
\tilde H_{y_2} <0, \ \forall y_2 < \nu_3, \quad \tilde H_{y_2}>0, \ \forall y_2 >\nu_3.
\end{equation*}
This implies that there is a connected component $\Sigma \cong S^3$ of the energy level $\tilde H^{-1}(-c).$  The anti-symplectic involution $\rho$ induces an anti-symplectic involution of $\R^3 \times \R_-,$ still denoted by $\rho.$ Since $\tilde H$ is invariant under $\rho$, so is $\Sigma.$

The rest of the  proof is analogous to the previous case,  so we omit it. This finishes the proof.

  \section*{Acknowledgments}
    The author would like to express his gratitude to      Urs Frauenfelder and Pedro Salom\~ao for invaluable comments. He  was supported 
    by the National Research Foundation of Korea(NRF) grant funded by the Korea government(MSIT) (No.\ NRF-2022R1F1A1074066).

      \appendix
      
      \section{Annulus foliations}\label{sec:app}

       As announced in Remark   \ref{rmk:app3} we shall construct  a foliation of $\Sigma_{\rm direct}^u$ (when $c<-\frac{3}{2}$) different from the one given in Section \ref{sec:lower}. An annulus foliation of $\Sigma_{\rm retro}^u$ (when $c \in (-\frac{3}{2},0)$) can be constructed in a similar manner.

     We fix $c<-\frac{3}{2}$ and $E_0 \in (E_{\rm direct}^u,0)$ and define $\Sigma_{\rm direct}^u \subset \Sigma_c^u$ as before.
Consider   $ \Lambda_3 >   \Lambda_2 $ and $0< \varepsilon < \frac{1}{2}( \Lambda_3 -\Lambda_2),$ where $\Lambda_2$ is as in   Section \ref{sec:lower}.
Let 
\[
V(y_2) = \frac{1}{2} (y_2 - \Lambda_3)^2 +B,
\]
where the constant $B$ is given as in \eqref{eq:B}, and
define
\[
\hat H_2 (y_2) = ( 1 - f(y_2)) H_2(y_2) + f(y_2) V(y_2),
\]
where     $f \colon \R \to [0,1]$ is a non-decreasing function satisfying
     \[ 
     \begin{cases} 
     f(y_2) =0 &  y_2 < \Lambda_2+  \varepsilon ,\\
     f(y_2)=1 & y_2 >  \Lambda_2 +  2\varepsilon.
     \end{cases}
     \]
Arguing as in Section \ref{sec:interpol}, we see that
\begin{equation}\label{eq:hatH}
    \hat H_2' <0, \ \forall y_2 < \Lambda_3  \quad \text{ and } \quad  \hat H_2'>0, \ \forall y_2 > \Lambda_3.
\end{equation}

We  introduce  
\[
\hat H (x_1, x_2, y_1, y_2)= H_1(x_1, y_1)+ \hat{H}_2 (y_2).
\]
Note that $\hat H$ is free of the variable $x_2.$ 
       The energy level ${\hat{H}}^{-1}(c)$ contains a  connected component $\hat \Sigma \cong S^1 \times S^2,$
  containing $\Sigma_{\rm direct}^u,$ where we have identified $S^1 \equiv \R/2 \pi \Z.$  
    If $(x_1, x_2, y_1, y_2) \in  \hat \Sigma,$ then
\[
\Lambda_1 \leq y_2 \leq \Lambda_{\max} := \sqrt{2(c-B)}  +\Lambda_3 .
\]

 The Liouville vector field
  \begin{equation*}\label{liouvilleee}
  \hat Y = \frac{1}{2} ( x_1 \p_{x_1} + y_1 \p _{y_1} ) +  (y_2 - \Lambda_3) \p_{y_2}  
 \end{equation*}
 satisfies $d\hat{H}(\hat Y)>0,$ implying that it   is transverse to $  \hat \Sigma.$ Therefore,   we obtain  a contact form $\hat\lambda$  
      \[
      \hat\lambda = \omega_0 ( \hat Y, \cdot)|_{\hat \Sigma} = \frac{1}{2} ( y_1 dx_1 -  x_1 dy _1 ) + ( y_2 - \Lambda_3) dx_2.
      \]

Consider the periodic orbits $Q_1 = (w_1, T_1)$ and $Q_2=(w_2, T_2),$ where
\begin{align*}
  w_1(t)  &= ( 0, ( \frac{1}{ \Lambda_1^3} -1)t, 0, \Lambda_1) ,  \quad T_1 = \frac{ 2 \pi} {1- 1/\Lambda_1^3  }\\
  w_2(t) &= (0, \sqrt{ 2 ( c-B)}t, 0, \Lambda_{\max}), \quad T_2 = \frac{ 2\pi}{\sqrt{ 2 ( c-B)}}.
 \end{align*}
Note that $Q_1$ corresponds to the direct circular orbit $\gamma_{\rm direct}^u.$
Since   
\[
\lambda(X_{\hat{H}})=\begin{cases}   ( \Lambda_1 - \Lambda_3)( \frac{1}{\Lambda_1^3}-1) & \text{ along } Q_1,\\
  2(c-B) & \text{ along } Q_2,
  \end{cases}
  \]   their Reeb periods are $2\pi(\Lambda_3-\Lambda_1)$ and $2 \pi \sqrt{ 2(c-B)},$ respectively.

      Introduce polar coordinates $(r, \theta)$ in the $(x_1, y_1)$-plane. Note that $ r = \sqrt{ 2(c-\hat{H}_2)}, $ showing that  the variable $r$ takes the minimum $r=0$ when $\hat{H}_2=c,$ or equivalently, $y_2 \in \{ \Lambda_1, \Lambda_{\max}\},$ and the maximum $r = r_{\max}$ when $y_2 = \Lambda_3.$  
            Fix $\theta \in \R / 2 \pi \Z$ and consider $u_\theta(s) = (r(s) \cos \theta, r(s) \sin \theta)$ such that  $r(s) $ satisfies
      \[
      \lim_{ s \to \pm \infty } r(s) = 0, \quad r(0) = r_{\max}. 
      \]
      Denote $\Pi( x_1, x_2, y_1, y_2) = (x_1, y_1).$ Then $\Pi^{-1}(u_\theta)$ is an embedded annulus in $\Sigma$ that is transverse to the Hamiltonian flow and asymptotic to $Q_1$ and $Q_2.$ By varying $\theta \in \R/2 \pi \Z,$ we obtain an open book decomposition of $\Sigma$ with annulus-like pages. Each annulus, restricted to the $(x_1, y_1, y_2)$-space, is an arc asymptotic to the points $(0,0,\Lambda_1)$ and $(0,0,\Lambda_{\max})$ that correspond to the binding orbits $Q_1$ and $Q_2,$ respectively.  See Figure \ref{fig:obd}.
    \begin{figure}[ht]
     \centering
\begin{tikzpicture}

 \begin{scope}[scale=0.7 ]
   \draw[thick]   (0,0 ) circle (1cm);
\node at (0,-1.2) [below] {$x_2 \in \mathbb{R} / 2 \pi \mathbb{Z}$};

 \begin{scope}[yshift=-0.5cm, xshift=2.5cm]
\begin{scope}[yshift=-2cm]
   \draw[densely dotted]   (4.5,2.5 ) ellipse (1.5cm and    0.35cm );

  \draw[dashed, ->]  (4.5, 0.5) to (7, 0.5);
    \node at (7.3, 0.5)  {$y_1$};
  \draw[dashed, ->]  (4.5, 0.5) to (2.4, -0.8 );
  \node at (2.4 , -0.8 ) [left] {$x_1$};
 \end{scope}
  \draw[dashed, ->]  (4.5, -1.5) to (4.5, 3);

\draw[gray, thick]   (4.5, 2)   [out=200, in=90]   to  (3.8 , 0.4)  ;
\draw[gray, thick]   (3.8 , 0.4)   [out=270, in=150]   to  (4.5, -1)  ;

\draw[gray, thick]   (4.5, 2)   [out=185, in=90]   to  (3.5 , 0.4)  ;
\draw[gray, thick]   (3.5, 0.4)   [out=270, in=170]   to  (4.5, -1)  ;

\draw[gray, thick]   (4.5, 2)   [out=182, in=90]   to  (3.2, 0.4)  ;
\draw[gray, thick]   (3.2, 0.4)   [out=270, in=175]   to  (4.5, -1)  ;

\draw[gray, thick]   (4.5, 2)   [out=230, in=90]   to  (4.05  , 0.4)  ;
\draw[gray, thick]   (4.05 , 0.4)   [out=270, in=130]   to  (4.5, -1)  ;

\draw[gray, thick]   (4.5, 2)   [out=250, in=90]   to  (4.3  , 0.4)  ;
\draw[gray, thick]   (4.3 , 0.4)   [out=270, in=110]   to  (4.5, -1)  ;


\begin{scope}[xscale=-1, xshift=-9cm]

\draw[gray, thick]   (4.5, 2)   [out=200, in=90]   to  (3.8 , 0.4)  ;
\draw[gray, thick]   (3.8 , 0.4)   [out=270, in=150]   to  (4.5, -1)  ;

\draw[gray, thick]   (4.5, 2)   [out=185, in=90]   to  (3.5 , 0.4)  ;
\draw[gray, thick]   (3.5, 0.4)   [out=270, in=170]   to  (4.5, -1)  ;

\draw[gray, thick]   (4.5, 2)   [out=182, in=90]   to  (3.2, 0.4)  ;
\draw[gray, thick]   (3.2, 0.4)   [out=270, in=175]   to  (4.5, -1)  ;

\draw[gray, thick]   (4.5, 2)   [out=230, in=90]   to  (4.05  , 0.4)  ;
\draw[gray, thick]   (4.05 , 0.4)   [out=270, in=130]   to  (4.5, -1)  ;

\draw[gray, thick]   (4.5, 2)   [out=250, in=90]   to  (4.3  , 0.4)  ;
\draw[gray, thick]   (4.3 , 0.4)   [out=270, in=110]   to  (4.5, -1)  ;

\end{scope}

 \draw[thick] (4.5,0.5) circle (1.5cm);
 \node at (4.5,3.3)  {$y_2$};
 \filldraw[draw=black, fill=black] (4.5, 2) circle (0.07cm);
 \filldraw[draw=black, fill=black] (4.5, -1) circle (0.07cm);
 \draw [->] ( 5.5, 2.5)  [out=190, in=30]  to      (4.55,  2.05);
 \draw [->] ( 5.5, -1 )  [out=200, in=300]  to      (4.55,  -1.05);
 \node at (5.5, 2.5) [right] {$(0,0,\Lambda_{\max})$};
 \node at (5.5, -1 ) [right] {$(0,0,\Lambda_1)$};
  
 \end{scope}
  \end{scope}
     \end{tikzpicture}
    \caption{An open book decomposition of $\Sigma$ with annulus-like pages }
    \label{fig:obd}
 \end{figure}
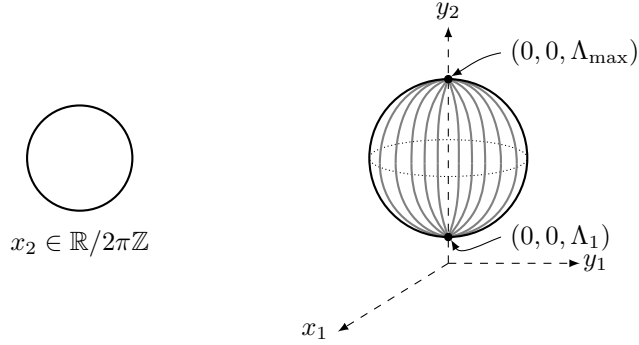

In order to construct a finite energy foliation of $\hat \Sigma$ projecting to the above-described open book decomposition,   choose  $J$ as in Section \ref{sec:lowerconstruction}. We fix $\theta \in \R / 2 \pi \Z$ and  let   $\tilde w_\theta= (d_\theta,w_\theta) \colon \R \times S^1 \to \R \times  \Sigma$ be a $\tilde J$-holomorphic curve. We make the following Ansatz:
 
 \smallskip
 
 \begin{quote} {\bf Ansatz.} The $\Sigma$-part $w_\theta$ is of  the form 
      \begin{equation*}\label{eqww}
      w_\theta(s,t) = ( r(s) \cos \theta  , 2 \pi t, r(s) \sin \theta, y_2(s)),
      \end{equation*}
      where $  r(s)$ satisfies $\lim_{s \to \pm \infty}r(s)=0.$
      \end{quote}
      
      \smallskip

  \noindent    
       In view of $\hat{H} = c,$ we find
       \begin{equation}\label{relationbetweenrandy2}
       \frac{1}{2}r(s)^2 +\hat{H}_2(y_2(s))=c.
       \end{equation}
      
       Proceeding as before, we find  
      \begin{equation}\label{eq:ofy2}
     \dot  y_2 (s) = \frac{ 2 \pi r(s)^2   }{ r(s)^2+ \hat{H}_2'(y_2(s)) ^2} =    \frac{ 4 \pi  (c- \hat{H}_2(y_2(s)) )  }{ 2(c-\hat{H}_2(y_2(s)))+ \hat{H}_2'(y_2(s)) ^2}  ,
      \end{equation}
which might be seen as a differential equation of type
\[
\dot y_2 (s) = P(y_2(s)).
\]
Here, $P=P(y_2)$ is a smooth function   on   $[ \Lambda_1, \Lambda_{\max}],$ which   vanishes precisely at $y_2 = \Lambda_1, \Lambda_{\max}.$ Since $P(y_2)>0, \forall   y_2 \in (\Lambda_1,\Lambda_{\max}),$ we conclude that a solution $y_2=y_2(s)$ to ODE \eqref{eq:ofy2} with initial condition $y_2(0) \in (\Lambda_1, \Lambda_{\max})$ is strictly increasing   and satisfies
            \[
            \lim_{s \to -\infty} y_2(s) = \Lambda_1, \quad \lim_{s \to+\infty} y_2(s) = \Lambda_{\max}.
            \]

We now find using  \eqref{relationbetweenrandy2} and \eqref{eq:ofy2} that 
           \begin{equation*}\label{eqofr}
      \dot r (s) = - \frac{2 \pi r(s)\hat{H}_2'( y_2(s))}{ r(s)^2+ \hat{H}_2'(y_2(s)) ^2}   =   \begin{cases}
     \sqrt{  ( 2 \pi - \dot  y_2 (s)) \dot y_2 (s)} & {\rm if } \; \hat{H}_2'(y_2(s))  \leq 0,\\
  - \sqrt{  ( 2 \pi -\dot  y_2 (s)) \dot y_2 (s)} & {\rm if } \;  \hat{H}_2'(y_2(s))  >0.       \end{cases} 
      \end{equation*}
      In view of \eqref{eq:hatH},
      we conclude that $r=r(s)$ is increasing for $s \in (-\infty, s_0)$ and decreasing for $s \in (s_0, +\infty),$ where   $s_0$ denotes the time at which $y_2(s_0) =\Lambda_3.$ In particular, $r(s) \to 0$ as $s \to \pm \infty.$ 

      We also find
 \[
  ( d_\theta)_s = \hat{\lambda}( (w_\theta)_t)= 2 \pi ( y_2 - \Lambda_3) , \quad (   d_\theta)_t = \hat{\lambda}( (w_\theta)_s) = 0.
 \]
 This implies that $d$ is independent of $t$, and hence  we may write
 \begin{equation*}\label{eqd}
 d_\theta(s,t) = 2 \pi \int_0^s  ( y_2( u ) - \Lambda_3) d u.
 \end{equation*}
Note that $d_\theta(s,t) \to +\infty$ as $s \to \pm \infty.$

We have constructed a finite energy $\tilde J$-holomorphic cylinder $\tilde w_\theta = (d_\theta, w_\theta)\colon \R \times S^1 \to  \R \times \Sigma$ that is asymptotic to $Q_1$ and $Q_2$ at $\mp \infty,$ respectively. Note that both asymptotic limits are positive. Its Hofer energy equals the sum of the Reeb periods of $Q_1$ and $Q_2.$

Since the above argument is independent of the choice of $\theta \in \R / 2 \pi \Z,$ by varying $\theta$ we obtain a finite energy foliation of $\R \times \hat \Sigma,$ which projects 
  the open book decomposition of $\hat \Sigma$ with annulus-like pages, decribed above. 
  Its intersection with the set $\Sigma_{\rm direct}^u$ yields the desired annulus foliation. 
Namely,   the solid torus $\Sigma_{\rm direct}^u$ is foliated into annuli transverse to the Hamiltonian flow restricted to $\Sigma_{\rm direct}^u.$  The   outer boundary of each annulus    lies  on the boundary torus  $\partial \Sigma_{\rm direct}^u.$ The inner boundary coincides with the direct circular orbit  $\gamma_{\rm direct}^u.$ 
  
    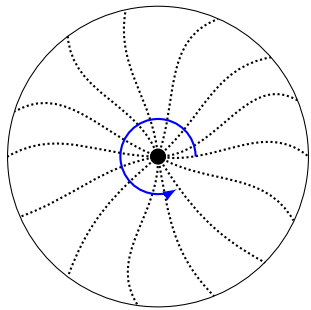
\begin{figure}[ht]
     \centering
\begin{tikzpicture} 
 
  \filldraw[draw=black,fill=white] (0,0 ) circle (2cm);
  
   \draw[thick, densely dotted] (0,0) [ out=350, in=140] to (2,0);
   \draw[thick, densely dotted] (0,0) [ out=0, in=150]  to (1.85 ,0.75);
   
   \draw[thick, densely dotted] (0,0) [out=40, in=190] to (1.5, 1.32);

   \draw[thick, densely dotted] (0,0)  [ out=70, in=200] to (0.8,1.83);
         \draw[thick, densely dotted] (0,0)  [ out=95, in=260] to (-0.43 ,1.95);
         
         \draw[thick, densely dotted] (0,0) [out=110, in=275] to (-1.2, 1.6);
         
   \draw[thick, densely dotted] (0,0) [ out=160, in=30]  to (-1.9,0.6);

            \draw[thick, densely dotted] (0,0) [out=180, in=30] to  (-2,0);

   \draw[thick, densely dotted] (0,0)  [ out=190, in=20] to (-1.83,-0.8);
      \begin{scope}[rotate=-10]
   \draw[thick, densely dotted] (0,0)  [ out=230, in=80] to (-0.9,-1.77 );
      \end{scope}
      \begin{scope}[rotate=-18]
   \draw[thick, densely dotted] (0,0)  [ out=290, in=140] to (0.35,-1.96);
         \end{scope}
         \begin{scope}[rotate=-15]
   \draw[thick, densely dotted] (0,0)  [ out=295, in=150] to (1.2,-1.6);
                  \end{scope}
                  \begin{scope}[rotate=-8]
   \draw[thick, densely dotted] (0,0)  [ out=310, in=160] to (1.61,-1.2);
\end{scope}
   \draw[thick, densely dotted] (0,0)  [ out=330, in=120] to (1.8,-0.85);
 
  \draw[thick, blue, ->]   (0.5,0) arc (0:300:0.5cm and 0.5cm);

   \filldraw[draw=black ,fill=black] (0,0 ) circle (0.1cm);

  \end{tikzpicture}
    \caption{A slice of the annulus foliation. The dot and the dotted curves correspond  to the direct circular orbit and  annuli, respectively. The blue curve points in the direction of the Hamiltonian flow. }
    \label{fig:slice}
 \end{figure}

  \bibliographystyle{abbrv}
\bibliography{mybibfile}

\end{document}